\newtheorem{theorem}{Theorem}
\newtheorem{lemma}[theorem]{Lemma}
\newtheorem{corollary}[theorem]{Corollary}
\newtheorem{definition}[theorem]{Definition}
\DeclareMathOperator{\prox}{prox}
\DeclareMathOperator{\diam}{diam}
\DeclareMathOperator{\rad}{rad}
\newcommand{\hh}{h}
\title{The $k$-visibility Localization Game}
\author[A.\ Bonato]{Anthony Bonato}
\author[T.G.\ Marbach]{Trent G.\ Marbach}
\author[J.\ Marcoux]{John Marcoux}
\author[JD Nir]{JD Nir}
\address[A1,A2,A3]{Toronto Metropolitan University, Toronto, Canada}
\address[A4]{Oakland University, Rochester, U.S.A.}
\email[A1]{(A1) abonato@torontomu.ca}
\email[A2]{(A2) trent.marbach@torontomu.ca}
\email[A3]{(A3) jmarcoux@torontomu.ca}
\email[A4]{(A4) jdnir@oakland.edu}
\begin{document}

\keywords{localization number, limited visibility, pursuit-evasion games, isoperimetric inequalities, graphs}
\subjclass{05C57,05C12}

\maketitle
\begin{abstract}
We study a variant of the Localization game in which the cops have limited visibility, along with the corresponding optimization parameter, the $k$-visibility localization number $\zeta_k$, where $k$ is a non-negative integer. We give bounds on $k$-visibility localization numbers related to domination, maximum degree, and isoperimetric inequalities. For all $k$, we give a family of trees with unbounded $\zeta_k$ values. Extending results known for the localization number, we show that for $k\geq 2$, every tree contains a subdivision with $\zeta_k = 1$. For many $n$, we give the exact value of $\zeta_k$ for the $n \times n$ Cartesian grid graphs, with the remaining cases being one of two values as long as $n$ is sufficiently large. These examples also illustrate that $\zeta_i \neq \zeta_j$ for all distinct choices of $i$ and $j.$ 
\end{abstract}

\section{Introduction}

Pursuit-evasion games, such as the Localization game and the Cops and Robber game, are combinatorial models for detecting or neutralizing an adversary’s activity on a graph. In such models, pursuers attempt to capture an evader loose on the vertices of a graph. The rules describing how the players move and how the adversary is captured depend on which variant is studied. Such games are motivated by foundational topics in computer science, discrete mathematics, and artificial intelligence, such as robotics and network security. For surveys of pursuit-evasion games, see the books \cite{bonato1,bonato2}; see Chapter~5 of \cite{bonato1} for more on the Localization game.

Among the many variants of the game of Cops and Robbers, one theme is to limit the visibility of the robber. For a nonnegative integer $k,$ in $k$-visibility Cops and Robbers, the robber is visible to the cops only when a cop is distance at most $k$. The case when $k=0$ has been studied~\cite{der1,der2,Tosik1985}, as has the case when $k=1$~\cite{bty1, bty2, bty3}, and a recent paper studied the cases $k\geq1$~\cite{kvis_CR_Clarke2020}. 

The Localization game was first introduced for one cop in~\cite{car,seager1}. The game in the present form was first considered in the paper \cite{car}, and subsequently studied in several papers such as~\cite{BBHMP,BHM,BHM1,BK,Bosek2018,nisse2,BDELM,seager2}. We consider a version of limited visibility Cops and Robbers in the setting of the Localization game, first introduced in \cite{BMMN}. Let $k$ be a non-negative integer. In the $k$-\emph{visibility Localization game}, two players are playing on a graph, with one player controlling a set of $m$ \emph{cops}, where $m$ is a positive integer, and the second controlling a single \emph{robber}. 
The game is played over a sequence of discrete time-steps; a \emph{round} of the game is a move by the cops and the subsequent move by the robber. The robber occupies a vertex of the graph, and when the robber is ready to move during a round, they may move to a neighboring vertex or remain on their current vertex. 
A move for the cops is a placement of cops on a set of vertices (note that the cops are not limited to moving to neighboring vertices). The players move on alternate time-steps, with the robber going first. In each round, the cops $C_1,C_2,\ldots ,C_m$ occupy a set of vertices $u_1, u_2, \dots , u_m$ and each cop sends out a \emph{cop probe} $d_i$, where $1\le i \le m$. 
If a cop $C_i$ is at distance $j$ from the vertex of the robber, where $0 \le j \le k,$ then $d_i=j$. 
In all other cases, the cop probe returns no information, and we set $d_i=\ast$. Hence, in each round, the cops determine a \emph{distance vector} $D=(d_1, d_2, \dots ,d_m)$ of cop probes. 
Relative to the cops' position, there may be more than one vertex $x$ with the same distance vector that the robber may occupy. 
We refer to such a vertex $x$ as a \emph{candidate of} $D$ or simply a \emph{candidate}. 
The cops win if they have a strategy to determine, after a finite number of rounds, a unique candidate, at which time we say that the cops {\em capture} the robber. We assume the robber is \emph{omniscient}, in the sense that they know the entire strategy for the cops. If the robber evades capture, then the robber wins. For a graph $G$, define the $k$-\emph{visibility localization number} of $G$, written $\zeta_k(G)$, to be the least positive integer $m$ for which $m$ cops have a winning strategy in the $k$-visibility Localization game. The standard Localization game is played similarly, except that each cop's probe returns $d_i$ as the distance between this cop and the robber. The localization number is the minimum number of cops required for this game, denoted $\zeta(G)$. 

For a graph $G$ of order $n$ and for all non-negative integers $k$, $\zeta(G) \le \zeta_k(G)$, as a winning cop strategy in the $k$-visibility Localization game will also be winning in the Localization game. Further, $\zeta_k(G) \le n - 1$. If $G$ has diameter at most $k+1$, then a probe of $\ast$ by a $k$-visibility cop can only represent a distance of $k+1$, and so $\zeta(G) = \zeta_{\diam(G)-1}(G)$. A recent work~\cite{BL22} introduced the so-called \emph{zero-visibility search game}, which is equivalent to $\zeta_k$ when $k=0.$ The case $k=1$ was studied in \cite{BMMN} and the $k$-visibility Localization was studied for random graphs in \cite{kvisr}.

The paper is organized as follows. In the next section, we consider various bounds for the $k$-visibility localization number, connecting it to domination, maximum degree, and isoperimetric parameters. We study a variant of the $k$-visibility localization number called the $k$-proximity number, which will be used to construct bounds for trees and grids. 
Section~3 focuses on trees, and for all $k$, we give a family of trees with unbounded $\zeta_k$ values. 
Section~4 shows that every tree contains a subdivision with $\zeta_k = 1$ when $k \geq 2$. 
This extends an analogous result about $1$-visibility localization numbers of trees \cite{car} to $k$-visibility localization numbers. Section~5 focuses on Cartesian grids, in which  Theorem~\ref{thm:main_grids_results} gives the exact value of $\zeta_k$ for such graphs in many cases, with the remaining cases being one of two values as long as $n$ is sufficiently large. This result also separates the parameters, showing that for all $i<j$, there exists a graph $G$ with $\zeta_{i}(G) \neq \zeta_{j}(G),$ settling an open problem stated in \cite{BMMN}. We finish with open problems. 

All graphs we consider are finite, undirected, reflexive, and do not contain parallel edges. The variable $k$ stands for a non-negative integer. We only consider connected graphs unless otherwise stated. The set of vertices that share an edge with $x$ is denoted $N(x)$, and we refer to vertices in $N(x)$ as \emph{neighbors} of $x$. Although our graphs are reflexive, we insist that $x\notin N(x).$ We define $N[x]=N(x) \cup \{x\}$. For a set $S$ of vertices, $N[S]=\bigcup_{u \in S} N[u]$. For a graph $G,$ let $\Delta(G)$ be the maximum degree of a vertex in $G$. For further background on graph theory, see \cite{west}.

\section{General Bounds}

We begin by discussing some bounds for $\zeta_k(G)$ in terms of other graph parameters. These results have been selected to build the reader's intuition for $\zeta_k(G)$ and establish some tools that will be used later. To do so, we must first introduce the $k$-\emph{proximity game} as a tool for analyzing $\zeta_k$. The $k$-proximity game has the same rules as the $k$-visibility Localization game, but rather than winning by locating the robber, the cops win by having any cop be within distance $k$ of the robber. We denote the minimum number of cops required to win the $k$-proximity game on a graph $G$ as $\prox_k(G)$. This is a generalization of the \emph{blind localization game} first considered in \cite{Bosek2018}, which was later reintroduced as the proximity game in \cite{BMMN}. The $k$-proximity game is also analogous to the $k$-\emph{visibility seeing cop-number} introduced in \cite{kvis_CR_Clarke2020}. We begin by showing in Theorem~\ref{thm:prox_bound} that while the $k$-proximity number and $k$-visibility localization numbers are not necessarily the same, there is an upper and lower bound on $\zeta_k$ in terms of $\prox_k$. Note that for this paper, $\Delta_k(G)$ is defined as the maximum number of vertices within distance $k$ of any vertex $u \in V(G)$.

\begin{theorem} \label{thm:prox_bound}
For a graph $G$ and $k \ge 1$, $$\prox_k(G) \le \zeta_k(G) \leq \Delta_k(G) \prox_k(G).$$
\end{theorem}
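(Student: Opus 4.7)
For the lower bound $\prox_k(G) \le \zeta_k(G)$, the plan is to convert any winning localization strategy with $m = \zeta_k(G)$ cops into a winning proximity strategy with $m$ cops. The cops would execute the localization strategy; by assumption, within finitely many rounds they reduce their candidate set to a single vertex, i.e., they know the robber's current vertex $r_t$. In the following round, the cops place one of their number on $r_t$. Since the robber has moved at most one step away from $r_t$ and $k \ge 1$, that cop is within distance $k$ of the robber's current position, so proximity is achieved.

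For the upper bound $\zeta_k(G) \le \Delta_k(G)\prox_k(G)$, I would use $M = \Delta_k(G)\prox_k(G)$ cops, partitioned into $\prox_k(G)$ teams of $\Delta_k(G)$ cops each. In each team, one cop is designated the \emph{leader} and the remaining $\Delta_k(G)-1$ as \emph{followers}. The $\prox_k(G)$ leaders cooperatively execute a winning $k$-proximity strategy, using only information from their own probes. In every round, after the leaders have moved, each team's followers place themselves on the vertices within distance $k$ of their leader other than the leader's own vertex; since there are at most $\Delta_k(G)$ vertices within distance $k$ of any vertex, a team of $\Delta_k(G)$ cops can always cover this entire ball. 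By the winning property of the proximity strategy, at some round $t$ a leader $L$ will probe a value $j \in \{0, 1, \ldots, k\}$. At that moment the robber's current vertex $r_t$ lies within distance $k$ of $L$, so it is one of the vertices occupied by $L$'s team; the cop stationed on $r_t$ probes $0$, making $r_t$ the unique candidate and winning the localization game.

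The most delicate point is to verify that the leaders' proximity strategy remains winning despite the extra presence of the followers, since the omniscient robber observes all cop positions and may base its moves on the followers as well as the leaders. This is not a true obstacle: the winning property of the proximity strategy holds against every possible robber play, and while the robber's responses in our setting may depend on the follower positions, those responses still lie within the universe of robber strategies against which the leader strategy is designed to succeed. Consequently the leaders are guaranteed to achieve proximity in finite time, at which point the follower step described above completes the localization.
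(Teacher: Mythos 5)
Your proposal is correct and follows essentially the same two reductions as the paper: locate-then-probe for the lower bound, and covering each proximity cop's $k$-ball with a team of localization cops for the upper bound. The only difference is that you explicitly justify why the omniscient robber's awareness of the follower cops does not break the leaders' proximity strategy, a point the paper's proof leaves implicit.
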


\begin{proof}
The lower bound holds since, given any strategy to identify the robber by cops playing the $k$-visibility Localization game, the same number of cops playing the $k$-proximity game can find the robber's position and probe it in the following round. The robber must be within distance $1$ of that vertex so, as $k\ge 1$, this probe will reveal the robber.

For the upper bound, we simulate the strategy of the $k$-proximity cops by using a $k$-visibility localization cop to probe each vertex visible by a $k$-proximity cop. 
As the cops succeed in the $k$-proximity game, some $k$-proximity cop must come within distance $k$ of the robber.
In the corresponding cop move for the $k$-visibility proximity game, this corresponds to a cop probing the robber's exact location. 
\end{proof}

Recall that a \emph{dominating set} $S$ in a graph $G$ has the property that for each vertex $x \notin S$, some vertex in $S$ is adjacent to $x.$ The domination number of $G$, written $\gamma(G)$, is the minimum cardinality of a dominating set. In \cite{BMMN}, the authors prove that if $G$ is $C_4$-free, $\zeta_1(G) \le \gamma(G) + \Delta(G)$. To generalize their result, consider the size of a minimal $j$-dominating set, written $\gamma_j(G)$, which is a set $S$ with the property that for each vertex $x \notin S$, there is some vertex in $S$ at a distance at most $j$ from $x$.

\begin{theorem} \label{thm:girth_bound}
For a graph $G$ with girth at least $2j+3$ and any $k \ge j$ we have that $$\zeta_k(G) \leq \gamma_j(G) + \Delta_j(G).$$
\end{theorem}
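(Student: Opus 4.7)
The plan is to split the $\gamma_j(G) + \Delta_j(G)$ cops into a stationary team of $\gamma_j(G)$ cops that permanently occupies a minimum $j$-dominating set $S$, and a roaming team of $\Delta_j(G)$ cops placed dynamically. Because $S$ is $j$-dominating and $k \ge j$, in every round some stationary cop at a vertex $s^* \in S$ is within distance $j$ of the robber and returns a probe value $d^* \le j$. Consequently, the set of candidate positions consistent with the stationary probes always lies in the sphere $N_{d^*}(s^*) \subseteq B_j(s^*)$, which has size at most $\Delta_j(G)$.

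In the first round, the stationary probes reduce the robber's candidate set to some $X$ with $|X| \le \Delta_j(G)$. In the second round, after the robber moves to some $r_2 \in N[X]$, I again place the stationary team on $S$ and place one roaming cop on each vertex of $X$. If any roaming probe returns $0$, the robber is caught; otherwise, I plan to show that the combined probes uniquely identify $r_2$. The girth hypothesis is crucial here: since the girth of $G$ is at least $2j+3$, the ball $B_j(s^*)$ induces a tree, and the value $d^*$ is excluded from $d(s^*, r_2)$ because an edge between two vertices at depth $d^* \le j$ from $s^*$ would create a cycle of length $2d^*+1 \le 2j+1$. Thus $d(s^*,r_2) \in \{d^*-1, d^*+1\}$, and combining this with the roaming probes I recover $r_2$: either as the common BFS parent of the elements of $X$ whose roaming probes returned $1$, or as a child at depth $d^*+1$ of the unique element of $X$ with probe value $1$.

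The main obstacle is finishing the reconstruction in the second case, where $r_2$ is a neighbor of a known parent $r_1 \in X$ but could a priori be any of $r_1$'s children. Distinguishing two such candidates $r_2$ and $r_2'$ requires the remaining $S$-probes: agreement of all stationary probes on $r_2$ and $r_2'$ would yield a cycle running from $r_2$ through $r_1$, up the BFS tree to $s^*$, across to some other probing vertex $s \in S$, and back down to $r_2'$. The girth bound $2j+3$ is precisely tight enough to forbid such cycles. Carrying out this case analysis carefully, particularly at the boundary $d^* = j$ where the tree-like structure of $B_j(s^*)$ is stretched to its limit, is the technical crux of the argument.
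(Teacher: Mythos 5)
Your overall plan is the paper's: guard a minimum $j$-dominating set $S$ with $\gamma_j(G)$ cops, use a probe value $d^*\le j$ at some $s^*\in S$ to confine the robber, spend the remaining $\Delta_j(G)$ cops on a local probe in the next round, and invoke the girth hypothesis to separate the surviving candidates. The one substantive difference is that you place the roaming cops on the candidate sphere $X\subseteq N_{d^*}(s^*)$ rather than on the whole ball $B_j(s^*)$ as the paper does; this forces you to handle the extra case $d(s^*,r_2)=d^*-1$ (which your unique-BFS-parent argument does handle), whereas probing the entire ball makes that case impossible, since a robber moving toward $s^*$ lands on a probed vertex and is caught by a $0$-probe.

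The proposal is incomplete, however, and the step you defer as ``the technical crux'' is exactly where all the content lies; the argument you sketch for it does not close the gap. You want to show that if $r_2$ and $r_2'$ are two children of $r_1$ and $s'\in S$ satisfies $d(s',r_2')\le j$, then the $S$-probes separate $r_2$ from $r_2'$, by extracting a cycle of length at most $2j+2$ from the closed walk that goes from $s'$ to $r_2$, across the edges $r_2r_1$ and $r_1r_2'$, and back to $s'$. But a closed walk need not contain a cycle: if the (unique) shortest paths from $s'$ to $r_2$ and to $r_2'$ both pass through $r_1$ --- for instance if $r_2$ and $r_2'$ are both pendant on $r_1$ and $s'$ lies at distance at most $j-1$ from $r_1$ --- then the walk degenerates to a tree walk, no short cycle is produced, and in fact $d(s',r_2)=d(s',r_1)+1=d(s',r_2')$, so the probe at $s'$ returns the same value for both candidates and separates nothing; the same holds for every probed vertex that reaches both candidates through $r_1$. (The paper's proof of this step uses the identical closed-walk argument and must confront the same degenerate configuration.) To finish, you must either show that some probed vertex reaches one of the two siblings \emph{not} through $r_1$, or modify the strategy (for example, an additional round probing the surviving sibling candidates directly); as written, the reconstruction of $r_2$ in your Case B is unproven.
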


\begin{proof}
We play the $k$-visibility Localization game with $\gamma_j(G) + \Delta_j(G)$ cops, and show that the cops can capture the robber. 
Let $S$ be a $j$-dominating set of size $\gamma_j(G)$. In each round, use $\gamma_j(G)$ cops to probe each vertex in $S$. As $k \ge j$, the cops can determine some $s \in S$ such that the robber is within distance $j$ from $s$. On the second round, the remaining $\Delta_j(S)$ cops probe each vertex in the $j$-neighborhood of $s$. If the robber is not on one of the probed vertices, then they must have moved to some vertex at distance $j+1$ from $s$. Some cop in the $\Delta_j$ ball around $s$, say at vertex $t$, must therefore be at distance $1$ from the robber. The robber is captured on that vertex if there is a unique neighbor of $t$ at distance $j+1$ from $s$. Otherwise, suppose $y$ and $z$ are neighbors of $t$ at distance $j+1$ from $s$. As $S$ is a $j$-dominating set, there is $s' \in S$ at distance at most $j$ from $y$. 

Suppose, for contradiction, that $z$ was also within distance $j$ from $s'$. The closed walk of length at most $2j+2$ that follows the path from $s'$ to $y$ takes the edge from $y$ to $t$, then from $t$ to $z$, and then returns to $s'$, contains a cycle of length at most $2j+2$. This contradicts that $G$ has girth at least $2j+3$. We conclude that if $\mathcal{C}$ is the set of candidates for the robber's position (that is, the set of neighbors of $t$ at distance $j+1$ from $s$), for each $v \in \mathcal{C}$ there is a $s' \in S$ that distinguishes $v$, which implies that $v$ is at distance at most $j$ from $s'$ and every other vertex in $\mathcal{C}$ is at distance greater than $j$ from $s'$, and therefore, the robber is captured in the second round.\end{proof}

Note that for a typical graph $G$, $\gamma_j(G)$ is decreasing in $j$ while $\Delta_j(G)$ is increasing. For graphs with high girth, Theorem~\ref{thm:girth_bound} offers flexibility as one can choose the value of $j$ minimizing the sum of these two variables. In applications where more is known about the structure of $G$, the technique of ``guarding'' a $j$-dominating set and chasing down the robber between the protected vertices can often be refined further.

In general, as in the Localization game, the $k$-visibility localization number of a graph $G$ is not in general monotonic on subgraphs of $G$, even if the subgraph is induced; see Exercises 5.6 and 5.7 in \cite{bonato1} for an example in the Localization game. However, in the case $G$ is a tree, we do observe monotonicity.

\begin{lemma} \label{lem:subtreebound}
For a tree $T$ with subtree $T'$, $\zeta_k(T') \leq \zeta_k(T)$ and $\prox_k(T') \le \prox_k(T)$.
\end{lemma}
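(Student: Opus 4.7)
The plan is to show that any winning $k$-visibility Localization (respectively $k$-proximity) cop strategy on $T$ can be ``projected'' onto a winning strategy of the same size on $T'$.

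Since $T'$ is a connected subtree of a tree $T$, for each $v \in V(T)$ there is a unique closest vertex $\pi(v) \in V(T')$: the unique $T$-path from $v$ to any $u \in V(T')$ enters $T'$ at a common vertex, which we take as $\pi(v)$ (and $\pi(v) = v$ when $v \in V(T')$). This projection satisfies the identity
\[ d_T(v, r) = d_T(v, \pi(v)) + d_{T'}(\pi(v), r) \]
for all $v \in V(T)$ and $r \in V(T')$, because the unique $v$-to-$r$ path in $T$ passes through $\pi(v)$ and its $\pi(v)$-to-$r$ segment lies entirely inside $T'$.

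Given a winning strategy $\sigma$ on $T$ using $m$ cops, I would have $m$ cops on $T'$ simulate $\sigma$ as follows: whenever $\sigma$ directs a cop to probe $v \in V(T)$, the corresponding $T'$-cop probes $\pi(v) \in V(T')$ instead. If the $T'$-probe returns $j \le k$, the simulator reports to $\sigma$ the value $j + d_T(v, \pi(v))$ when this sum is at most $k$, and $\ast$ otherwise; if the $T'$-probe returns $\ast$, the simulator reports $\ast$. By the identity above, the reported value is precisely what $\sigma$ would have observed if a robber were sitting at the real robber's position $r \in V(T')$ inside $T$.

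For $\zeta_k$, the actual robber position $r$ is always consistent with every reported probe and therefore remains among the candidates in the simulated $T$-game throughout play. Since $\sigma$ eventually narrows the candidates to a unique vertex, that vertex must be $r$, and the $T'$-cops have located the robber. For $\prox_k$, once $\sigma$ wins, some reported probe at $v$ has value $j + d_T(v,\pi(v)) \le k$, in which case $d_{T'}(\pi(v), r) \le d_T(v,r) \le k$ by the identity, so the $T'$-cop at $\pi(v)$ is within distance $k$ of the robber. The main obstacle is verifying that the translated probe values correctly emulate the visibility cutoff used by $\sigma$, but the projection identity makes this essentially automatic.
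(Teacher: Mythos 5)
Your proposal is correct and follows essentially the same route as the paper's proof: both project each probe $v$ to the nearest vertex $\pi(v)$ of $T'$, use the identity $d_T(v,r)=d_T(v,\pi(v))+d_{T'}(\pi(v),r)$ to translate the returned distances (with the cutoff at $k$), and conclude that the simulated play on $T$ still succeeds while the true robber position stays among the candidates. No gaps.
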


\begin{proof}
Let $f:V(T) \to V(T')$ map $f(v) = v$ for each $v \in V(T')$ and, for $u \in V(T) \setminus V(T')$, let $f$ map $u$ to the unique vertex in $V(T')$ minimizing the distance (in $T$) from $u$. To play either game on $T'$, the cops follow their winning strategy on $T$. Whenever their strategy calls for probing a vertex $u \in V(T) \setminus V(T')$, they instead probe on $f(u)$ and add the distance (in $T$) from $u$ to $f(u)$ to the distance they receive. If this distance is greater than $k$, then the cops treat the probe as though it had not returned a distance. 

Since for any $v$ in $V(T')$ and $u$ in $V(T) \setminus V(T')$ the unique shortest path from $u$ to $v$ must pass through $f(u)$, this strategy will yield the exact same information as the strategy on $T$, but with the additional information that the robber cannot be on any of the vertices in $V(T) \setminus V(T')$. Thus, the candidate sets formed in this strategy will be subsets of the candidate sets formed in the strategy on $T$, so as the strategy succeeds on $T$, it will also succeed on $T'$.
\end{proof}

There are other ways in which the $k$-visibility Localization game is different to the Localization game when the graph is known to be a tree. We will discuss more consequences in Section~\ref{sec:trees}, but we prove one additional result here.

\begin{lemma} \label{lem:proxplusone}
    For a tree $T$, we have that $\zeta_k(T) \leq \prox_k(T) + 1$.
\end{lemma}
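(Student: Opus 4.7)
My plan is to augment a winning $k$-proximity strategy $\sigma$ for $m := \prox_k(T)$ cops with one additional \emph{distinguishing} cop $C^*$, producing a winning strategy for the $k$-visibility Localization game with $m+1$ cops. The $m$ proximity cops follow $\sigma$, while $C^*$'s probe each round is chosen adaptively based on the information collected so far.

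At the first round $R$ in which the proximity strategy succeeds---i.e., some proximity cop at a vertex $u$ returns a probe $d \leq k$---the candidate set $\mathcal{C}$ for the robber lies in the sphere $S_d(u) = \{v \in V(T) : d(u,v) = d\}$. The crucial tree-structural fact is that $S_d(u)$ is partitioned by the neighbors of $u$: each candidate $v$ lies in a unique subtree $T_j$ of $T \graphminus u$ rooted at some neighbor $u_j$ of $u$. Consequently, in a subsequent round, $C^*$ can probe a neighbor $u_j$ of $u$ to determine the branch containing the robber: the probe returns $d-1$ if the robber came from $T_j$ and $d+1$ (or $\ast$, when $d=k$) otherwise. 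Iterating this idea, $C^*$ probes vertices progressively deeper along the appropriate path, narrowing the candidate set by at least one branch per round until it reduces to a single vertex.

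The main obstacle is handling the robber's movement between rounds. Since the robber moves before each cop probe, the candidate set can expand to the closed neighborhood $N[\mathcal{C}]$ before the next probes are made, and in particular the proximity cop's revealed distance $d$ may change. In a tree this expansion is highly constrained, and by having the proximity cops continue to execute $\sigma$ we can keep the robber within view of some cop as $C^*$ narrows $\mathcal{C}$ using the branch-determining strategy above. To make this rigorous, one maintains the invariant that after each round following round $R$, either the game is won or $|\mathcal{C}|$ strictly decreases; since $|\mathcal{C}|$ is bounded below by $1$, the game terminates in finitely many rounds with $|\mathcal{C}|=1$.
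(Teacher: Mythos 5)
There is a genuine gap. Your argument hinges on two claims that are not established and are in fact false as stated. First, you assert that ``by having the proximity cops continue to execute $\sigma$ we can keep the robber within view of some cop.'' A winning $k$-proximity strategy guarantees only that at \emph{some} round a cop comes within distance $k$ of the robber; it says nothing about subsequent rounds. Once $\sigma$ has succeeded at round $R$ it is exhausted, and restarting it gives the robber time to move out of every cop's visibility radius, at which point all distance information is lost. Second, the claimed invariant that $|\mathcal{C}|$ strictly decreases each round is unsubstantiated. Each round the candidate set first expands to $N[\mathcal{C}]$; your distinguishing cop $C^*$ contributes only one probe, which partitions candidates by distance to a single vertex and returns $\ast$ for any candidate beyond distance $k$. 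If $u$ has degree $\Delta$, checking the branches one per round takes $\Delta$ rounds, during which the robber recedes $\Delta$ steps into its branch and the probes at neighbors of $u$ all return $\ast$. The race between branch-elimination and candidate-set expansion is exactly the difficulty, and your sketch does not resolve it.

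The paper's proof avoids this by using the extra cop for \emph{confinement} rather than \emph{distinction}: the extra cop $C_r$ probes a fixed root $r$ every round, so the robber can never cross $r$ and is permanently trapped in one component $T_i$ of $T - r$. The remaining $\prox_k(T)$ cops then run a full proximity strategy on $T_i$ (valid by the subtree monotonicity of Lemma~\ref{lem:subtreebound}), and when detection occurs the cops verify, via the identity $d_2 + d_3 - d_1 = 0$ characterizing a robber outside $T_i$, that the robber really is in $T_i$; they then move the guard to the root of $T_i$ and recurse. Progress is measured by the strictly decreasing height of the confining subtree, not by the size of a candidate set, which sidesteps both of the problems above. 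If you want to repair your approach you would need to replace the ``keep the robber in view'' step with a genuine confinement mechanism of this kind.
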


\begin{proof}
Note that that when $k=0$, $\zeta_k(T) = \prox_k(T)$, and so the statement clearly holds. We therefore assume for the rest of the proof that $k \geq 1$ and describe a strategy to use $\prox_k(T)+1$ cops to win the $k$-visibility localization game.

Choose a vertex $r \in T$ and root $T$ at $r$. We use one cop, say $C_r$, to probe $r$ each round. Let $T_1, T_2,\ldots, T_m$ be the connected components of $T-r$. To avoid immediate capture at $r$, the robber must be on $T_i$ for some $i$. Notice that the robber is unable to leave $T_i$ without going through $r$, at which point $C_r$ would probe a distance of $0$ and capture the robber, so the robber must remain on $T_i$.

By Lemma~\ref{lem:subtreebound}, $\prox_k(T_i) \le \prox_k(T)$ for each $i$. The remaining $\prox_k(T)$ cops follow a winning $k$-proximity game strategy on $T_i$. If these cops complete the strategy without detecting the robber, then the cops know the robber is not located on $T_i$ and proceed to $T_{i+1}$. 
Therefore, suppose that the cops probe a vertex $v$ in $T_i$ and find the robber is within distance $k$ of $v$, say distance $d_1$. We wish to move $C_r$ onto the root of $T_i$ and repeat this process, but we must be careful that the robber is truly located in $T_i$.
Let $d_2$ be the distance that the cop $C_r$ probed to the robber, with either $d_2\leq k$ or $d_2=*$, and let $d_3$ be the distance between $v$ and $r$. 
If $d_2=0$, then the robber is captured by the cop $C_r$, so we may assume that $1 \leq d_2 \leq k$ or $d_2=\ast$. 

Suppose the robber is not on $T_i$. We then have that $d_2 \neq *$ since $d_2$ is smaller than $d_1$ as $r$ is closer to the robber than $v$, and since we have assumed $d_1 \leq k$. 
Since the shortest path from $v$ to the robber must include $r$, we have that $0=d_2+d_3- d_1$. 
Note that if the robber was in $T_i$, then $d_2+d_3- d_1$ would be twice the distance from $r$ to the path between the robber and $v$, which would therefore be non-zero. 
Thus, the cops can detect that the robber is not in $T_i$ in this case. 

Now suppose the robber is on $T_i$.
If $d_2=1$, then the robber is captured, either because $d_1=0$ or they are at the unique vertex at distance $d_3-1$ from $v$ and distance $1$ from $r$. 
If $d_2 \geq 2$, then after the robber's next move, then the robber will still be within $T_i$. 
The cops now begin the strategy again on $T_i$, a tree of strictly smaller height. 
By repeating this process, the cops inductively reduce the height of the tree until they capture the robber at a leaf vertex.
\end{proof}

For a set of vertices $S$, let $N(S)$ denote the set of vertices in $V(G)\setminus S$ with a neighbor in $S$. We will refer to this as the \emph{border} of $S$. 
The \emph{vertex-isoperimetric parameter} of a graph $G$ at volume $k$, $\Phi_V(G,k)$, was defined in  \cite{BMMN} as
\[\Phi_V(G,k) = \min_{S \subseteq V(G) : |S| = k} |N(S)|. \] 
In the literature, it is common to define the term \emph{isoperimetric inequality}, which is any bound on $|N(S)|$ from below and usually for a general set $S$. 

The authors in \cite{BMMN} also defined a \emph{vertex-$h$-index} of a graph $G$, $H_v(G)$, which they used to construct a lower bound on $\prox_k(G)$, and thus also $\zeta_k(G)$, when $k=1$.
This result could be generalized for general $k$, but we give an improvement to this result. 
Let $\Phi_V(G) = \max_k \Phi_V(G,k)$.

\begin{theorem} \label{thm:isoperimetric}
For any graph $G$, 
    \[\zeta_k(G) \geq \prox_k(G) > \frac{\Phi_V(G)}{\Delta_k(G)}.\]
\end{theorem}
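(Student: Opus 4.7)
First I would dispose of the inequality $\zeta_k(G) \geq \prox_k(G)$ by direct appeal to Theorem~\ref{thm:prox_bound}, leaving only the isoperimetric bound $\prox_k(G) > \Phi_V(G)/\Delta_k(G)$ to prove. My plan is to verify its contrapositive: whenever an integer $m$ satisfies $m \cdot \Delta_k(G) \leq \Phi_V(G)$, the robber has a winning strategy in the $k$-proximity game, so $m < \prox_k(G)$. Taking the largest such $m$ then yields $\prox_k(G) \geq \lfloor \Phi_V(G)/\Delta_k(G) \rfloor + 1 > \Phi_V(G)/\Delta_k(G)$.

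The strategy is for the robber to maintain, offline, an auxiliary candidate set $\tilde{R}_t \subseteq V(G)$ of cardinality exactly $k^*$, where $k^*$ is chosen so that $\Phi_V(G, k^*) = \Phi_V(G)$. Because every cop probe returns $\ast$ while the robber remains uncaptured, the cops' placements are a fixed function of $t$ and may be treated as known to the omniscient robber; let $K_t$ denote the union of the closed $k$-neighborhoods of the cops at round $t$, so $|K_t| \leq m \Delta_k(G)$. I would initialize $\tilde{R}_0$ as any $k^*$-subset of $V(G)$ and, recursively, choose $\tilde{R}_t$ as any $k^*$-subset of $N[\tilde{R}_{t-1}] \setminus K_t$.

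The pivotal step is verifying that this recursion is well-defined at every round. Because $|\tilde{R}_{t-1}| = k^*$, the definition of $\Phi_V(G, k^*)$ applied to $\tilde{R}_{t-1}$ gives $|N(\tilde{R}_{t-1})| \geq \Phi_V(G)$, whence $|N[\tilde{R}_{t-1}]| \geq k^* + \Phi_V(G)$, and therefore $|N[\tilde{R}_{t-1}] \setminus K_t| \geq k^* + \Phi_V(G) - m \Delta_k(G) \geq k^*$. Letting $R_0 = V(G)$ and $R_t = N[R_{t-1}] \setminus K_t$ denote the full reachability sets, a short induction gives $\tilde{R}_t \subseteq R_t$, so each $R_t$ is non-empty. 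Applying K\"onig's lemma to the finitely branching tree of valid robber paths then produces an infinite sequence $v_0, v_1, \dots$ with $v_{t+1} \in N[v_t]$ and $v_t \notin K_t$ for all $t$; the robber plays this path to evade capture forever.

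The main obstacle is resisting the temptation to argue directly on the full reachability set $R_t$: if $|R_t|$ drifts away from $k^*$, the isoperimetric bound $|N(R_t)| \geq \Phi_V(G, |R_t|)$ can fall well below $\Phi_V(G)$, and the induction collapses. Committing to maintain $|\tilde{R}_t| = k^*$ is exactly what lets the maximal isoperimetric constant $\Phi_V(G)$ match up with $m \Delta_k(G)$ at every round, delivering the full strength of the stated bound.
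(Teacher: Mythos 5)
Your proof is correct, and it establishes the same core inequality as the paper --- that the cops can delete at most $m\Delta_k(G)\le\Phi_V(G)$ candidate vertices per round while the isoperimetric peak guarantees the candidate set regrows by at least $\Phi_V(G)$ --- but your bookkeeping is genuinely different and, in my view, cleaner. The paper tracks the \emph{full} reachability set $S$, which forces it to handle sets whose size drifts above $k^*$; it copes by proving the Lipschitz-type bound $\Phi_V(G,k^*+p)\ge\Phi_V(G)-p$, running a case analysis on whether $|S|$ exceeds $k^*+\Phi_V(G)$, and separately verifying $k^*+\Phi_V(G)\le|V(G)|$. By pinning a witness subset $\tilde R_t$ of cardinality exactly $k^*$ (the argmax of $\Phi_V(G,\cdot)$), you invoke the isoperimetric bound only at the volume where it is strongest, which eliminates the Lipschitz lemma and the case split entirely; the containment $\tilde R_t\subseteq R_t$ then transfers non-emptiness to the true reachability sets. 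Your closing steps also make explicit two points the paper leaves implicit: that the cops' probe sequence is deterministic so long as every probe returns $\ast$ (so the omniscient robber can precompute $K_t$), and that K\"onig's lemma is what converts ``$R_t\neq\emptyset$ for all $t$'' into an actual infinite evading trajectory. The only nitpick is the round indexing of $K_0$ versus $K_1$ at initialization, which is cosmetic.
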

\begin{proof}
Suppose that we play the $k$-proximity game with $\frac{\Phi_V(G)}{\Delta_k(G)}$ or fewer cops. 
Let $S_1$, $S_2$, and $S_3$ be the sets of vertices that the robber could safely occupy during, respectively, the present round before the cops have probed any vertices, after the cops have probed their vertices, and after the robber has moved. 
Since each cop can detect the robber on at most $\Delta_k(G)$ vertices on each round, cumulatively there are at most $\Phi_V(G)$ vertices of $S_1$ in which the cops can catch the robber. Therefore, we can assume $|S_2| \geq |S_1|-\Phi_V(G)$. 

Let $k$ be such that $\Phi_V(G,k)=\Phi_V(G)$. 
    Since adding $i$ vertices to a set may decrease the border by at most $i$ vertices, we have $\Phi_V(G,k+i)\geq\Phi_V(G)-i$. 
    If $i<\Phi_V(G)$, then this gives $\Phi_V(G,k+i)>0$, and since $\Phi_V(G,k+i)=0$ when $k+i \geq |V(G)|$, it follows that $k+\Phi_V(G) \leq |V(G)|$.
    
Consider the case when $|S_1| \geq k+\Phi_V(G)$ and $|S_2| \leq k+\Phi_V(G)$. 
We have that $|S_2| \geq |S_1|-\Phi_V(G) \geq (k+\Phi_V(G))-\Phi_V(G) = k$. Suppose that $|S_2|=k+p$, for some $p\geq 0$. 
We also have that $|N(S_2)| \geq \Phi_V(G,k+p) \geq \Phi_V(G)-p$, and so $$|S_3| = |S_2|+| N(S_2)| \geq (k+p)+(\Phi_V(G)-p) = k+\Phi_V(G).$$ 
Therefore, if we ever start with $k+\Phi_V(G)$ or more vertices unknown to the cop before the current cop move, then there will be $k+\Phi_V(G)$ or more vertices unknown to the cop before the next move. 
Since we start with $|V(G)| \geq k+\Phi_V(G)$ vertices that are unknown to the cop, the cops will never be able to reduce the size of the unknown vertices below $k$, and so the robber will never be caught.
\end{proof}

\section{Trees} \label{sec:trees}

The original Localization game, in which the cops have unrestricted vision, is completely understood on trees. As was proved first in~\cite{seager1} and later in~\cite{nisse2}, the localization number of trees is at most two, and trees with localization number $1$ are completely characterized. In contrast, it was shown in~\cite{BMMN} that the $1$-visibility localization number of trees is unbounded. This also holds for the $k$-visibility localization number. 

\begin{theorem} \label{thm:trees_unbounded}
If $d$ is a positive integer, then a tree $T$ exists such as $\zeta_k(T) > d$.    
\end{theorem}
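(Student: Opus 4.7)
Proof Proposal. By Theorem~\ref{thm:prox_bound} we have $\zeta_k(T) \ge \prox_k(T)$ for any graph $T$, so it suffices to exhibit, for each positive integer $d$, a tree $T_d$ with $\prox_k(T_d) > d$, together with a winning robber strategy against $d$ cops in the $k$-proximity game.

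Construction: I propose taking $T_d$ to be a \emph{chain of spiders}. The tree consists of a spine path containing $d+2$ spider hubs $h_0, h_1, \ldots, h_{d+1}$, where consecutive hubs are joined by spine segments of length $L_{\mathrm{spine}} \ge (d+2)(2k+1)$, and at each hub $h_i$ we attach $b$ additional paths (``legs'') of length $L_{\mathrm{leg}} \ge 2k+2$, with $b \ge d+1$. Because consecutive hubs are at graph distance greater than $2k$, a single cop's visibility ball of radius $k$ cannot simultaneously contain two distinct hubs. In particular, with only $d$ cops, at each round at least two of the $d+2$ hubs are \emph{unthreatened} in the sense that no cop is within distance $k$ of them.

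Robber strategy: The robber maintains the invariant that they occupy a vertex in a leg of some unthreatened hub, at depth greater than $k$ from the hub. When the cops reposition so as to threaten the robber's current hub, one of two things happens. Either (a) the robber's hub still has a leg all of whose vertices are at distance greater than $k$ from every cop (which is forced when $b$ exceeds the number of cops close to that hub), in which case the robber moves within the hub to that safe leg; or (b) the robber begins migrating along the spine to a neighboring unthreatened hub. Case (b) is viable because any cop teleporting to threaten the robber's current hub must vacate its previous position, and with $d$ cops collectively covering at most $d$ windows of $2k+1$ consecutive spine vertices along a spine segment of length at least $(d+2)(2k+1)$, there always remains a cop-free window through which the robber can slip.

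The main obstacle is making the migration argument fully rigorous. Since cops can teleport arbitrarily, a coordinated set of probes may appear to surround the robber. The technical core of the proof is an invariant tracking, at each round, the set of hubs that are ``safely reachable'' from the robber's current position via cop-free spine windows; a pigeonhole-style argument exploits the numerology $d+2$ hubs, $d+1$ spine gaps, $d$ cops to show that at least one unthreatened hub is always reachable in a bounded number of robber moves. Fine-tuning of the parameters $L_{\mathrm{spine}}$, $L_{\mathrm{leg}}$, and $b$ as linear functions of $d$ and $k$ should ensure the invariant is preserved throughout the game, yielding $\prox_k(T_d) > d$ and hence $\zeta_k(T_d) > d$.
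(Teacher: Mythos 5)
Your reduction to the $k$-proximity game via Theorem~\ref{thm:prox_bound} is sound, but the construction itself cannot work: a chain of spiders has $\prox_k$ bounded by a constant independent of $d$, so no robber strategy on it can defeat $d$ cops once $d$ is moderately large. The reason is that every spine vertex of your tree is a cut vertex whose removal leaves a ``cleared'' prefix with border exactly one vertex, and this admits a guarded sweep by two or three cops. One cop probes the current frontier vertex (a hub, say) on every round; since the robber must pass through that vertex to cross from one side to the other, it would be detected before recontaminating cleared territory. Meanwhile a second cop sweeps the next spine segment, or the legs of the guarded hub one at a time: a robber hiding at depth greater than $k$ in a leg of a guarded hub cannot leave that leg without being detected at the hub, so a probe at depth $2k+1$ (or an outward sweep, for longer legs) clears it. The frontier thus advances monotonically through all $d+2$ hubs. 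Your pigeonhole count of unthreatened hubs is beside the point: the cops never need to threaten the robber's current hub on every round; they only need to prevent recontamination behind a one-vertex barrier, and increasing the number of hubs, legs, or spine lengths only delays the sweep rather than stopping it. This is also visible through the paper's own lower-bound machinery: $\Phi_V(T_d)$ is $O(1)$ for your tree (an initial segment of the spine together with its legs has border one), so Theorem~\ref{thm:isoperimetric} gives nothing.

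The paper instead takes $q$-regular trees, for which the vertex-isoperimetric parameter satisfies $\Phi_V(T)\ge\frac{3}{40}(\rad(T)-2)$ by a result of Vrt'o, and applies Theorem~\ref{thm:isoperimetric} to obtain $\zeta_k(T)\ge\prox_k(T)>\frac{3}{40}\frac{(\rad(T)-2)(q-2)}{q(q-1)^k-2}$, which is unbounded for fixed $q$ and $k$ as the radius grows. If you want to salvage a hands-on adversary argument, you must use a tree in which no set of vertices of the relevant size has a small border (a complete $q$-ary tree, not a path-like one); otherwise the guarded-sweep strategy above defeats the robber with a bounded number of cops.
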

\begin{proof}
The vertex-isoperimetric value of a $q$-regular tree $T$ has been studied~\cite{iso-peak-trees-Vrto-2010} and it is known to be at least $\frac{3}{40}(\mathrm{rad}(T)-2)$. Theorem~\ref{thm:isoperimetric} then yields 
$$\zeta_k(T) \geq \prox_k(T)> \frac{3}{40}\frac{(\mathrm{rad}(T)-2)(q-2)}{q(q-1)^k-2}.$$ 
Since $q$ and $\mathrm{rad}(T)$ are independent, we may fix $q$ and increase $\mathrm{rad}(T)$, yielding a tree with $\zeta_k(T)$ arbitrarily large. 
\end{proof}

Note that when $q$ and $k$ are fixed constants, this lower bound is asymptotic to the radius of the tree. 
We show in Theorem~\ref{thm:upper_tree} that there is an upper bound on $\zeta_k$ that is asymptotic to the radius of the tree for $k$ constant. 
As such, the bounds are asymptotically tight, and so the behavior of $\zeta_k$ on trees is largely analogous to $\zeta_1$. 
These bounds are not asymptotically tight when $k$ changes with the radius of the tree, however. 
Noting that $\zeta_k(G)$ weakly decreases as $k$ increases, in Theorem~\ref{thm:tree_growth_characterization} we find the threshold on $k$ in terms of the radius required for the $k$-visibility localization number to diverge on trees, and show that a class of graphs exists that diverge beyond this threshold.  

We begin with an example problem, which demonstrates the case when $k$ is large. 

\begin{theorem}\label{thm:trees_simple}
For a tree $T$, if $k = \rad(T)$, then $\zeta_{k}(T) \leq 2$.
\end{theorem}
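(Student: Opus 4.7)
My plan is to deduce this bound as an essentially immediate corollary of Lemma~\ref{lem:proxplusone}. That lemma already provides the inequality $\zeta_k(T) \le \prox_k(T) + 1$ for every tree $T$, so the entire task reduces to showing $\prox_k(T) \le 1$ whenever $k = \rad(T)$.

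To establish $\prox_k(T) \le 1$, I would fix a center $c \in V(T)$, that is, a vertex whose eccentricity equals $\rad(T)$, and have the single cop in the $k$-proximity game occupy $c$ for every round. Since by definition of $\rad(T)$ every vertex of $T$ lies within distance $\rad(T) = k$ of $c$, the cop is within distance $k$ of the robber after the very first cop move, no matter where the robber chose to start or how they moved. Hence the cop wins the $k$-proximity game on the first round, proving $\prox_k(T) \le 1$.

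Combining this with Lemma~\ref{lem:proxplusone} yields $\zeta_k(T) \le \prox_k(T) + 1 \le 2$, as desired. There is no substantive obstacle in this argument since the theorem is essentially a two-step packaging of tools that have already been developed earlier in the section. The only point worth briefly sanity-checking is that the definition of the $k$-proximity game indeed treats having some cop at distance at most $k$ from the robber as a win, which matches the reading given in the paragraph preceding Theorem~\ref{thm:prox_bound}; under that reading, the center-occupying strategy trivially succeeds.
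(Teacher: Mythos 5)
Your proof is correct, and it takes a genuinely different (and shorter) route than the paper. The paper proves Theorem~\ref{thm:trees_simple} directly: one cop probes a center $r_1$ every round, and since $k=\rad(T)$ this probe always returns a distance, so the robber is confined to the subtree rooted at a single neighbor of $r_1$ (passing through $r_1$ gives $d_1=0$); a second cop then scans the neighbors of $r_1$ to find one returning a strictly smaller distance, and the cops recurse on subtrees of decreasing height. You instead observe that $\prox_{\rad(T)}(T)=1$ trivially (a cop parked on a center is within distance $\rad(T)=k$ of every vertex, so the first probe already wins the proximity game) and then invoke Lemma~\ref{lem:proxplusone}, which appears earlier in the paper and is proved independently, so there is no circularity. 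Your reduction buys brevity and shows the result is already implicit in the general machinery; the paper's direct argument buys a self-contained, explicitly described two-cop strategy, which the authors frame as an illustrative warm-up for the harder tree results that follow (and whose root-probing-plus-subtree-recursion mechanism is essentially the same engine running inside the proof of Lemma~\ref{lem:proxplusone} anyway). The only caveat, which does not affect correctness, is the degenerate case $\rad(T)=0$, where $T$ is a single vertex and the bound is immediate without the lemma.
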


\begin{proof}
We give a strategy in which two cops with visibility $\rad(T)$ capture the robber. Choose a vertex $r_1$ that has distance at most $\rad(T)$ to each vertex, and root $T$ at $r_1$. The first cop probes $r_1$, which will always return a distance $d_1 \leq \rad(T)$. The robber is restricted to a subtree of $T$ rooted at some neighbor of $r_1$: if they attempt to move from one neighbor of $r_1$ to another, they must pass through $r_1$, at which point $d_1 = 0$ and the first cop locates the robber. The second cop sequentially probes the neighbors of $r_1$ until they receive a distance $d_2 < d_1$ on some vertex $r_2$. The cops now know that the robber is located within the subtree rooted at $r_2$ and can repeat the process on this subtree. In this way, the cops confine the robber to subtrees of decreasing height until the robber is captured on a leaf vertex. 
\end{proof}

Consider a sequence of trees $(T_n)_{n \ge 1}$ of increasing radius. Theorem~\ref{thm:trees_unbounded} tells us for fixed $k$, it may be that $\lim_{n \to \infty} \zeta_k(T_n) \to \infty$. However, if we allow $k$ to grow as $k = k(n) = \rad(T_n)$, then we see $\lim_{n \to \infty} \zeta_k(T_n) \le 2$. In the following theorem, the main result of this section, we determine how fast $k$ must grow as a function of $\rad(T)$ to ensure $\zeta_k(T_n)$ remains bounded.

\begin{theorem}\label{thm:tree_growth_characterization}
For every sequence of trees $(T_n)_{n \ge 1}$ with $\rad(T_n) \to \infty$ and every continuous function $f(n) = \Omega(\sqrt{n})$, there exists a constant $d$ such that if $k = k(n) = f(\rad(T_n))$, then
    \[ \limsup_{n \to \infty} \zeta_k(T_n) \le d. \]
    
 For every continuous function $f(n) = o(\sqrt{n})$ there exists a sequence of trees $(T_n)_{n \ge 1}$ such that, with $k = k(n) = f(\rad(T_n))$,
    \[ \lim_{n \to \infty} \zeta_k(T_n) \to \infty.\]
\end{theorem}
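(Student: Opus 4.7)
The plan is to prove the two parts of the theorem separately.

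For Part 1 (upper bound, $f = \Omega(\sqrt{n})$), I would construct an adaptive cop strategy using a constant number of cops, generalizing the recursive argument in Theorem~\ref{thm:trees_simple}. Given a constant $c$ with $f(n) \ge c\sqrt{n}$ for large $n$, the goal is to show there is some constant $d = d(c)$ (depending on the sequence and $c$) such that $d$ cops with visibility $k$ can win on any $T_n$ of radius $r \le k^2/c^2$. The strategy keeps one cop $C_0$ anchored at the current ``root'' to confine the robber to a single subtree; the remaining cops adaptively probe vertices at depths near $k$ within the candidate subtrees to identify which contains the robber, after which $C_0$ is moved deeper along the path to the robber. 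Each such phase takes $O(k)$ rounds and reduces the effective radius by $\Omega(k)$; iterating $O(k)$ phases handles radius $\Omega(k^2)$. Lemma~\ref{lem:subtreebound} justifies the recursive restriction to subtrees, and Lemma~\ref{lem:proxplusone} allows reducing the analysis to the $k$-proximity game when helpful.

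For Part 2 (lower bound, $f = o(\sqrt{n})$), I would construct an explicit family of trees witnessing the divergence. A natural candidate is the sequence of spiders $T_n = S_{q_n, n}$ with $q_n = \lceil \sqrt{n} \rceil$ legs each of length $n$, so that $\rad(T_n) = n$ and $q_n / f(n) \to \infty$. The key structural lemma I would prove is $\zeta_k(S_{q,r}) = \Omega(q/(k+1))$, using a ``phantom robber'' argument: since only a cop at depth within $[r-k, r]$ in a given leg can detect a robber at the far end of that leg, at most $d$ legs are ``guarded'' by $d$ cops at any given round. When $q > 2d + O(1)$, the robber can maintain two phantom positions at the ends of different unguarded legs, and all cops' probes return $\ast$ on both phantoms, keeping them indistinguishable. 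Extending this argument across many rounds as cops reposition requires showing that the robber can commit to an evolving pair of phantom positions consistent with the probe history, exploiting that the robber moves only one step per round (so any such phantom move is realizable). Applied to $T_n$, this yields $\zeta_{f(n)}(T_n) = \Omega(\sqrt{n}/f(n)) \to \infty$.

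The main obstacle is Part 1, specifically the $\Omega(k^2)$ radius handled by a constant number of cops. A naive relay strategy (placing cops at depths $0, k, 2k, \ldots$ along the path from root to robber) handles only $\Omega(k)$ radius per cop, which would require $\Theta(\sqrt{r})$ cops when $k = \Theta(\sqrt{r})$ and thus falls short of a constant bound. The extra factor of $k$ must come from the robber's limited movement speed: over $O(k)$ consecutive rounds the cops can adaptively probe a full $k$-shell and amplify effective visibility by a factor of $k$. Formalizing this amortization, and in particular verifying it for trees whose centroid has many subtrees, is the crux of the argument. For Part 2, the remaining technical delicacy lies in establishing the phantom-persistence claim across arbitrary cop strategies, rather than only against the naive cycling cop considered in the intuition.
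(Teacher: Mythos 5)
Your Part~2 rests on a false structural lemma: spiders do not have large $k$-visibility localization number. Indeed, Lemma~\ref{lem:spider} of this paper shows that $\zeta_k(S)=1$ for \emph{every} spider $S$ whenever $k\ge 2$, regardless of the number or length of the legs, so the claim $\zeta_k(S_{q,r})=\Omega(q/(k+1))$ fails already at $k=2$ (and any $f=o(\sqrt n)$ with $f\to\infty$ puts you in this regime). The phantom-robber argument breaks because the cops do not need to keep every leg simultaneously ``guarded'': they win by shrinking the candidate set, and a single cop can clear the legs one at a time. Once a leg has been verified empty, the robber can only re-enter it through the centre vertex, and a cop returning to (or near) the centre every other round detects any such crossing at distance at most $k$; hence cleared legs stay cleared and your pair of phantom positions is eventually eliminated. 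Forcing divergence genuinely requires trees whose uncleared region regrows faster than a few cops can shrink it, i.e.\ trees with a large vertex-isoperimetric peak; the paper uses subdivided complete $q$-ary trees with $q\ge 4$, the bound of Theorem~\ref{thm:isoperimetric}, and a ``shadow game'' reduction to the $0$-proximity game on $T(h,q)$ (Theorem~\ref{thm:lower_tree}). Spiders have essentially no isoperimetric expansion, which is precisely why they are easy for the cops.

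For Part~1 you correctly identify the target bound, namely that $O(\rad(T)/k^2)$ cops suffice (the paper's Theorem~\ref{thm:upper_tree}), and you correctly locate the source of the extra factor of $k$ in amortizing each cop over $k$ checkpoints spaced $k$ apart during $k$ rounds. But you leave exactly this amortization, which is the entire content of the upper bound, unproven, and your sketch does not confront the real obstacle: a vertex of $T_n$ may have arbitrarily many children, so ``adaptively probing the candidate subtrees'' with constantly many cops cannot determine the robber's subtree in $O(k)$ rounds. The paper instead runs a proximity-style detection strategy along root-to-leaf paths in depth-first order, spending $k$ rounds per path, and argues that an already-swept path separates the tree so the robber cannot recontaminate earlier paths; some argument of this kind is unavoidable. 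You also omit the minor but necessary case of trees of bounded radius, where $f(\rad(T_n))$ need not yet satisfy $f\ge\alpha\sqrt{\cdot}$ and one must use continuity of $f$ to bound $k$ away from $0$ and $\infty$ in order to extract a single constant $d$.
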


The upper bound of Theorem~\ref{thm:tree_growth_characterization} follows from the following general upper bound for $\zeta_k(T)$ in terms of $k$ and $\rad(T)$.

\begin{theorem}\label{thm:upper_tree}
    For any tree $T$ and any $k \geq 1$, $\zeta_{k}(T) \leq \lceil\frac{\mathrm{rad}(T) + k}{k^2}\rceil + 1$. 
\end{theorem}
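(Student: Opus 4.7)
By Lemma~\ref{lem:proxplusone}, it suffices to prove that $\prox_k(T) \leq \lceil(\rad(T)+k)/k^2\rceil$. Set $r = \rad(T)$ and $m = \lceil(r+k)/k^2\rceil$; root $T$ at a central vertex $c$ so that every vertex has depth at most $r$. I would exhibit a winning strategy for $m$ cops in the $k$-proximity game, arguing by induction on $m$. The inductive setup is: with $m$ cops the strategy succeeds whenever $r \le m k^2 - k$, which matches the ceiling in the bound.

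For the base case $m = 1$ (i.e.\ $r \le k^2 - k$), the plan is for the single cop to alternate probes of the root $c$ with a sweep down one branch of $T$ via probes at depths $k+1,\, 3k+2,\, 5k+3,\,\ldots$, each covering a slab of depth $2k+1$ along that branch. The interleaved $c$-probes force the robber to stay at depth strictly greater than $k$ from $c$, which prevents the robber from rerouting through the center to another branch between two consecutive sweep probes; meanwhile, because the cop's probes teleport but the robber moves only one step per round, each slab probe on the sweep makes net progress $\Omega(k)$ against the robber's unit-speed evasion. Since $r \le k^2 - k = k(k-1)$, a bounded number of sweep probes suffices to exhaust a branch, and by cycling through the principal branches of $c$ (recursing DFS-style into nested sub-branches) the cop clears the tree in a finite number of rounds.

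For the inductive step $m \ge 2$, I would reserve one cop as a ``guard'' that probes a carefully chosen vertex near depth $k^2$ on a designated geodesic from $c$. The guard's probes certify that, unless captured, the robber must remain in a subtree of radius at most $(m-1)k^2 - k$. The remaining $m - 1$ cops apply the strategy inductively on this smaller subtree, while the guard prevents the robber from slipping back into the shallow region that the guard already controls. Combined with the base case, this yields the desired bound for every $m$, and hence $\zeta_k(T) \le \prox_k(T) + 1 \le m + 1$ by Lemma~\ref{lem:proxplusone}.

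\textbf{Main obstacle.} The delicate step is the base case on a general branching tree, as opposed to a spider or a path. When a principal branch of $c$ is itself a branching subtree, the cop must refine its sweep recursively while still interleaving $c$-probes often enough that the robber cannot slip back through $c$ between consecutive sweep probes. The scheduling argument has to track, for each round, a shrinking set of candidate locations that remains closed under one-step moves of the robber and is cleared by the cops' probes, which forces the precise arithmetic $k^2 - k = k(k-1)$ to appear as the budget of sweep probes a single cop can afford on a root-to-leaf path before the guard or the next probe must capture. I expect that a careful choice of DFS traversal order, with the cop returning to $c$ on every alternate round during a sweep, will suffice.
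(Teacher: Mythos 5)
There is a genuine gap, and it sits exactly where you flagged it: your entire plan rests on the unproven claim that $\prox_k(T) \le \lceil(\rad(T)+k)/k^2\rceil$, in particular the base case that \emph{one} cop wins the $k$-proximity game on every tree of radius at most $k^2-k$. This is strictly stronger than what the paper's method yields. The paper's detection phase uses $\lceil(\rad(T)+k)/k^2\rceil$ cops to keep every checkpoint at distance $ik$ along a root-to-leaf path probed once every $k$ rounds, \emph{plus} a separate cop $C^\ast$ that probes the root and the junctions between consecutive DFS paths; that extra cop is essential to the separation argument and to the final capture, so the paper only gives $\prox_k(T)\le\lceil(\rad(T)+k)/k^2\rceil+1$. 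Routing through Lemma~\ref{lem:proxplusone} therefore requires you to save a cop somewhere, and a round-budget count shows how tight this is: a probe at distance $\delta$ from a path vertex $v$ buys $v$ roughly $k-\delta+1$ rounds of protection and covers roughly $2(k-\delta)$ path vertices, so continuously guarding a path of length $R$ consumes about a $2R/k^2$ fraction of all rounds. At $R= k^2-k$ a single cop's schedule is essentially saturated by guarding one path, leaving no rounds to simultaneously guard the root, guard the junctions, and make sweeping progress --- which is precisely the work $C^\ast$ does in the paper.

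The concrete failure mode of your base-case sweep is recontamination from side branches. In a branching tree, a probe at depth $p$ on the spine clears only the initial segment (within distance $k$) of each branch hanging near depth $p$; the rest of that branch remains contaminated and re-enters the spine within about $k+1$ rounds, well before a sweep of length $k^2-k$ finishes. So ``net progress $\Omega(k)$ per slab probe'' is true of the front but false of the cleared region as a whole, and recursing DFS-style into a side branch forces the single cop to abandon the guard on the spine and on $c$. The inductive step has the analogous problem: a lone guard probing one vertex near depth $k^2$ controls only a ball of radius $k$ and does not separate a branching tree into a shallow and a deep part; to certify that the robber is confined to a subtree of smaller radius you must guard an entire root-to-leaf path (or level), which is what the paper's checkpoint system and $C^\ast$ jointly accomplish. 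I am not certain your proximity bound is false --- it holds for spiders and for some shallow regular trees --- but it is not implied by the paper's argument, your sketch does not establish it, and the obstruction above suggests that if it is true it needs a genuinely new idea. The safer repair is to abandon the reduction through Lemma~\ref{lem:proxplusone} and instead spend the $+1$ cop inside the detection phase, as the paper does.
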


\begin{proof}
We will give a strategy in which $\lceil\frac{\mathrm{rad}(T) + k}{k^2}\rceil + 1$ $k$-visibility localization cops capture the robber.
Choose a vertex $r$ of $T$ with distance at most $\rad(T)$ from any other vertex, and root $T$ at $r$. 

First, consider the following strategy to protect a path from $r$ to a leaf of $T$ of length $\ell$ using $\lceil \frac{\ell + k}{k^2} \rceil$  cops. We claim that by implementing this strategy, after a finite number of rounds, the robber cannot enter the path without being within distance $k$ of some cop, which we refer to as the cops \emph{detecting} the robber. 
Consider the set of vertices $S_k = \{s_i\}_{i=0}^{\lfloor \ell/k\rfloor}$ on the path such that $s_i$ is at distance $ik$ from $r$. 
If $\ell/k$ is not an integer, 
then add the leaf to $S_k$ as $s_{\lceil \ell/k\rceil}$. 
We claim if each of these vertices is probed at least once every $k$ rounds, then after each vertex has been probed once, the cops can detect the robber entering the path. Suppose that the cops begin by probing the vertices along the path at distance $0, k^2, 2k^2, \ldots, \lceil \frac{\rad(T)-k^2+k}{k^2}\rceil k^2$ from $r$. Note that this placement uses
\[ 1 + \left\lceil \frac{\rad(T)-k^2+k}{k^2}\right \rceil = 1 + \left\lceil \frac{\rad(T)+k}{k^2} - 1\right \rceil = \left\lceil \frac{\rad(T)+k}{k^2}\right \rceil \]
probes. 
On subsequent rounds, if a cop probed $s_i$ on the previous round, then they probe $s_{i+1}$ on this round. On the round after a cop probes the leaf, they return to the root and probe $r$. 
This strategy guarantees that every vertex in $S_k$ is probed every $k$ moves. 
If $\rad(T)/k$ is an integer, then $S_k$ contains $1+\rad(T)/k = 1+\lceil \rad(T)/k\rceil$ vertices, and otherwise it contains $2+ \lfloor \rad(T)/k \rfloor = 1 + \lceil \rad(T)/k \rceil$ vertices.
Over $k$ rounds, the cops probe a total of 
\[ k \cdot \left\lceil \frac{\rad(T)+k}{k^2}\right \rceil 
= k \cdot \left\lceil \frac{\rad(T)/k+1}{k}\right \rceil \ge \lceil \rad(T)/k+1 \rceil 
= 1 + \lceil \rad(T)/k \rceil\]
vertices, where we use the inequality $x \cdot \lceil \frac{y}{x} \rceil \ge \lceil y \rceil$.
    
Now suppose the cops have implemented this strategy for at least $k$ rounds so that every vertex in $S_k$ has been probed at least once. Assume for contradiction that the robber enters the path at some vertex $v$ between the vertices $s_i$ and $s_{i+1}$ and then leaves the path, all without any cop probing a vertex within distance $k$ of the robber. Let round $t$ be the last round a cop probes $s_{i+1}$ before the robber reaches $v$. 
Let the robber's position on round $t$ be $u_1$, and the robber's position on round $t+k$ be $u_2$. To go undetected on round $t$, $u_1$ must be at distance at least $k+1$ from $s_{i+1}$. To avoid being detected on round $t-1$, when $s_i$ was probed, $u_1$ must also be at distance at least $k$ from $s_i$. First suppose $u_1$ is at distance at most $\frac{k}{2}$ from $v$. There is exactly one path from $u_1$ to $s_i$ and exactly one path from $u_1$ to $s_{i+1}$, both of which must pass through $v$. Thus, we can say that $d(s_i,v) = d(s_i,u_1) - d(v,u_1) \geq k - \frac{k}{2} = \frac{k}{2}$ and similarly, $d(s_{i+1},v) = d(s_{i+1},u_1) - d(v,u_1) \geq k + 1 - \frac{k}{2} = \frac{k}{2} + 1$. This gives
\[d(s_i,s_{i+1}) = d(s_i,v) + d(v,s_{i+1}) \ge \frac{k}{2} + \frac{k}{2} + 1 = k+1,\]
but $s_i$ was chosen to be at distance at most $k$ from $s_{i+1}$. Thus, we conclude $u_1$ is at distance greater than $k/2$ from $v$. We have that $d(u_1,v)+d(u_2,v) \le k$ as the robber has only moved $k$ times from round $t$ to round $t+k$. 
Repeating the same analysis with $u_2$, we once again imply $d(s_i,s_{i+1}) \ge k+1$, which is a contradiction. 
We conclude that the robber cannot enter the path at any vertex $v$. The robber also cannot enter the path at any $s_i \in S_k$ as if they are at distance $k+1$ when $s_i$ is probed, they are unable to reach $s_i$ before it is probed again. We conclude that after $k$ rounds if the robber enters the path, some cop will probe a vertex within distance $k$ of the robber.

Next, we extend the strategy provided above that detects the robber if it moves to a path of $T$ to a strategy that detects the robber on $T$. 
Order the root-to-leaf paths in $T$ using a depth-first-search ordering of the leaves. 
All but one cop, say $C^\ast$, will probe these paths. 
The cops use the first $k$ rounds to clear the first path using the method described above. 
During this time, $C^\ast$ probes $r$ every round. 
After the first $k$ rounds, the cops probing the first path will now focus on the second path. 
They spend the next $k$ rounds clearing the second path using the method described above while $C^\ast$ alternates between probing $r$ and the vertex furthest from $r$ shared between the first and second paths. 
The cops continue this strategy, spending $k$ rounds per path while $C^\ast$ alternates between probing $r$ and the last vertex shared between the current and previous path.

We claim that this detection strategy works; that is, after finitely many rounds, some cop will probe a vertex within distance $k$ of the robber. To prove this claim, we show that from round $jk+1$ to round $(j+1)k$; the robber can only stay undetected by occupying vertices on paths with index larger than $j$. 
As $T$ contains finitely many leaves, eventually, the robber will be detected. 
Because the vertices were chosen by depth-first search, the vertices of path $j$ partition $T$ into the vertices on paths less than $j$ and those on paths larger than $j$, so the robber cannot access the vertices on paths with index smaller than $j$ without first occupying a vertex on path $j$. The initial portion of path $j$, up to at most distance $k$ from where path $j$ diverges from path $j+1$, has been probed for at least the last $k$ rounds, so the robber is unable to enter undetected by the previous analysis. 
As $C^\ast$ probes the junction between path $j$ and path $j+1$ every other round, and $k \ge 1$, the robber cannot access vertices close to the junction either. 
The remaining vertices of path $j$ are only adjacent to vertices in paths with index smaller than $j$, so the claim holds by induction.  

To capture the robber, the cops will implement a detection strategy on the subtree rooted at each neighbor of $r$ with the modification that $C^\ast$ probes $r$ whenever they would probe the root of the subtree. The cops' goal is to determine a neighbor $r'$ of $r$ such that the robber is contained in the subtree of the descendants of $r'$, with root taken to be $r'$. 
Suppose the detection strategy is played on the subtree of descendants of $r'$ and some cop's probe returned a distance of at most $k$ to the robber. 
There is a cop that probes $r'$ once in every $k$ rounds, and the cop $C_\ast$ probes $r$ once every two rounds, and might probe a vertex close to $r'$ on the other round. All other vertices that are probed by the cops have distance more than $k$ to vertices outside of this subtree. 
Therefore, the cop who probed a distance of at most $k$ to the robber knows the robber is on this subtree unless it was one of these specially mentioned cops. In the latter case, the cops probe $r$ and $r'$ in the next round. If the robber was not on this subtree in the last round, then the cop on $r$ will probe a distance of at most $k$, say $d_1$, and the cop on $r'$ will probe a higher value, say $d_2$, which has either $d_2 = d_1+1$ or $d_2= \ast$, both of which indicate to the cops that the robber is not on this subtree. 
If the robber was on this subtree, then either both of these cops probe $\ast$, or 
$d_2<d_1$, or $d_2=k$ and $d_1=\ast$. 

The cops can then probe $r$ and $r'$, ensuring the robber is not on either vertex, and then restart the strategy on the subtree rooted at $r'$. 
As this subtree has a strictly smaller radius, the claim follows by induction, noting that a tree of radius one is a star with $k$-visibility localization number $1$ when $k \ge 1$. 

Suppose the robber plays only on the subtree rooted at some $r'$. 
In that case, the cops will eventually use the detection strategy on that subtree, detect the robber, and make progress by limiting the robber to a smaller subtree. 
To avoid this, the robber will attempt to move from one neighbor of $r$ to another. As $C^\ast$ probes $r$ every other round and $k \ge 1$, they will detect the robber at distance $1$ from $r$. When this occurs, the cops shift to a straightforward strategy: $C^\ast$ probes $r$ every round while the other cops probe neighbors of $r$. As long as $C^\ast$ received a distance of $1$ on the previous round if a cop probes a neighbor of $r$ and receives either no distance or a distance larger than $C^\ast$, then the cops know that the robber is not located on that subtree. 
We conclude that the robber approaching $r$ only speeds up the cops' progress to find the subtree on which the robber resides. Therefore, the robber's best strategy is to stay on one subtree of $r$ where they will eventually be detected, and the cops will make progress.
\end{proof}

The following theorem gives a bound in the case $k=0.$

\begin{theorem}\label{thm:upper_tree_0}
    For any tree $T$, $\zeta_{0}(T) \leq \mathrm{rad}(T) + 1$.
\end{theorem}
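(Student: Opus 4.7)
The approach is to induct on $\rad(T)$. For the base case $\rad(T) = 0$, $T$ is a single vertex and one cop captures the robber by probing it, matching the bound.

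For the inductive step, I would select a vertex $r$ of eccentricity $\rad(T)$ and root $T$ at $r$, obtaining the subtrees $T_1, T_2, \ldots, T_m$ hanging off the children of $r$. The key structural observation is that when $T_i$ is viewed with the corresponding child $c_i$ of $r$ as its root, every $v \in V(T_i)$ satisfies $d_{T_i}(c_i, v) = d_T(r, v) - 1 \leq \rad(T) - 1$, so $c_i$ has eccentricity at most $\rad(T) - 1$ in $T_i$, giving $\rad(T_i) \leq \rad(T) - 1$. By the induction hypothesis, $\zeta_0(T_i) \leq \rad(T_i) + 1 \leq \rad(T)$.

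The strategy uses $\rad(T) + 1$ cops, in the spirit of Lemma~\ref{lem:proxplusone}. One designated cop $C_r$ probes $r$ every round; this guards $r$ and prevents the robber from ever moving between two different subtrees without being captured. The remaining $\rad(T)$ cops process the subtrees sequentially, executing on each $T_i$ in turn a winning $0$-visibility strategy guaranteed by the inductive hypothesis. Since $C_r$ forces the robber to lie in a single subtree $T_i$ from the first round onward, when the cops reach that particular $T_i$ in the sequence they capture the robber.

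The main subtlety to address is that the winning strategy on $T_i$ must succeed even though the robber may have wandered freely within $T_i$ while the cops were searching other subtrees. This is not a true obstacle: a winning $0$-visibility strategy on $T_i$, by definition, captures the robber from any initial position, and probes conducted on vertices outside $T_i$ register only as misses (since all distances are either $0$ or $\ast$ in the $0$-visibility regime), so they supply no information that the robber could exploit to circumvent the strategy. The one boundary case to verify explicitly is a robber starting at $r$, but such a robber must vacate $r$ before the first cop probe or else be captured immediately, so the confinement argument takes effect from the first round onward.
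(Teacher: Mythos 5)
Your inductive skeleton (root at a center, guard $r$ with one cop, recurse on the subtrees of radius at most $\rad(T)-1$) is sound, but there is a genuine gap in how you invoke the induction hypothesis: a winning $0$-visibility \emph{localization} strategy on $T_i$ is not the right object to compose. The localization win condition is global --- the cops must reduce the candidate set over all of $T$ to a single vertex --- and a strategy that wins on $T_i$ standalone may do so by elimination (all probes return $\ast$ but only one vertex of $T_i$ remains consistent) rather than by an actual distance-$0$ probe. This causes two failures. First, when the cops run the $T_j$-strategy on a subtree not containing the robber, it terminates in a false positive rather than a certificate of absence, so the cops cannot discard $T_j$, and its candidate set re-expands as soon as they move on. Second, even when they reach the subtree that does contain the robber, an elimination win there leaves the global candidate set equal to that singleton union the still-contaminated other subtrees, so the cops have not won. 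Concretely, take the spider with three legs $r-a_i-b_i$: here $\zeta_0(P_2)=1$ via ``probe $a_i$; if $\ast$, the robber is at $b_i$,'' but executing this once per leg never localizes a robber that simply sits on some $b_i$, since $\{b_1\}$, $\{b_2\}$, and $\{b_3\}$ can never be distinguished. Your ``main subtlety'' paragraph addresses a different (and harmless) concern, namely the robber wandering before the cops arrive, and misses this one.

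The repair is to run the induction on $\prox_0$ rather than $\zeta_0$: prove $\prox_0(T)\le\rad(T)+1$, noting that a completed $0$-proximity strategy on $T_i$ with no hit genuinely certifies absence (the robber cannot cross the guarded root), and that a proximity win at $k=0$ is a distance-$0$ probe, which pins the robber exactly and hence is also a localization win, giving $\zeta_0\le\prox_0$. Note that $\prox_0(P_2)=2$, not $1$, which is why the arithmetic $1+\max_i\prox_0(T_i)\le 1+\rad(T)$ is exactly tight. This is in spirit what the paper does, though by a different decomposition: it occupies every vertex of one root-to-leaf path at a time (at most $\rad(T)+1$ vertices) in depth-first order, so each fully occupied path is a barrier the robber cannot be on or cross, and the robber is eventually probed at distance $0$. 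Your subtree recursion, once converted to $\prox_0$, is a valid alternative route to the same bound.
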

\begin{proof}
    We may modify the proof of Theorem~\ref{thm:upper_tree}, such that we sequentially search through each root-to-leaf path, placing a cop on each vertex of the path. 
\end{proof}

The lower bound in Theorem~\ref{thm:tree_growth_characterization} follows from the following construction. Let $T(h,q)$ denote the complete $q$-ary tree of height $h$; that is, the rooted tree where each vertex of distance less than $h$ from the root has $q$ children. 

\begin{theorem}\label{thm:lower_tree} 
For each $h\geq 3$, $q\geq 4$, and $k$, there exists a tree $T$ of height $\mathrm{rad}(T)=h(2k+1) - (k+1)$ with 
\[\zeta_k(T)>\frac{\Phi_V(T(h,q))}{4(2k+1)} \geq \frac{3}{160} \frac{h-2}{2k+1}.\]
As such, there is a sequence of trees $(T_j)_{j \geq 1}$ with $\mathrm{rad}(T_j) \rightarrow \infty$ and sequence of integers $k_j = \sqrt{\mathrm{rad}(T_j)}/\omega_j$ such that
\[
\lim_{j \rightarrow \infty}\zeta_{k_j}(T_j) = \omega(1),
\]
where $\omega_j \rightarrow \infty$ as $j \rightarrow \infty$. 
\end{theorem}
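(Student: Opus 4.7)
The plan is to construct $T$ from $T(h,q)$ by subdividing each of the $q$ edges incident to the root into a path of length $k$ and every other edge into a path of length $2k+1$, and to root $T$ at the original root $r$. Every leaf is then at distance $k + (h-1)(2k+1) = h(2k+1) - (k+1)$ from $r$, while any $v \neq r$ has eccentricity strictly larger than this (take a leaf in a branch not containing $v$ and route through $r$), so $\rad(T) = h(2k+1) - (k+1)$ with $r$ as a centre. The second inequality in the theorem is immediate from $\Phi_V(T(h,q)) \geq \frac{3}{40}(h-2)$, the vertex-isoperimetric bound for $q$-regular trees of radius $h$ from~\cite{iso-peak-trees-Vrto-2010}, applied as in the proof of Theorem~\ref{thm:trees_unbounded}.

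For the main inequality I would run an isoperimetric argument parallel to that of Theorem~\ref{thm:isoperimetric} but measured in $T(h,q)$. Identify $V(T(h,q))$ with its image in $V(T)$ as the \emph{original} vertices; the key geometric fact is that any two original vertices at $T(h,q)$-distance at least two are at $T$-distance at least $2k+1$, so a single cop probe in $T$ can lie within distance $k$ of at most one such pair member. I group the rounds of play into super-rounds of $4k+2 = 2(2k+1)$ consecutive $T$-rounds. Within each super-round the robber either stays at its current virtual vertex $v_s$ or walks along the subdivided edge to an adjacent virtual vertex $v_{s+1}$, using the final $2k+1$ rounds for the walk. Any single cop probe during the super-round threatens at most the two virtual positions $\{v_s, v_{s+1}\}$ relevant to that super-round, because at every intermediate round the robber's actual position in $T$ lies within distance $k$ of exactly one of those two endpoints. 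Hence $m$ cops eliminate at most $2m(4k+2) = 4m(2k+1)$ virtual positions per super-round. Plugging this into the isoperimetric argument of Theorem~\ref{thm:isoperimetric} applied to the $0$-proximity game on $T(h,q)$ (which has $\Delta_0(T(h,q)) = 1$), the robber preserves a non-empty safe set whenever $4m(2k+1) < \Phi_V(T(h,q))$, giving $\zeta_k(T) \geq \prox_k(T) > \Phi_V(T(h,q))/(4(2k+1))$.

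For the asymptotic statement, fix $q = 4$ and, for $j \geq 1$, let $T_j$ be the tree produced by this construction with parameters $h_j = j^2$ and $k_j = j$. Then
\[
\rad(T_j) = j^2(2j+1) - (j+1) = 2j^3 + j^2 - j - 1,
\]
so $\sqrt{\rad(T_j)} \sim \sqrt{2}\, j^{3/2}$ and hence $k_j = j = \sqrt{\rad(T_j)}/\omega_j$ with $\omega_j \sim \sqrt{2j} \to \infty$. The main inequality applied to $T_j$ gives
\[
\zeta_{k_j}(T_j) > \frac{3(h_j - 2)}{160(2k_j + 1)} = \frac{3(j^2 - 2)}{160(2j + 1)} \to \infty,
\]
establishing the second claim.

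The hard part will be rigorously justifying the ``at most two virtual positions per probe'' bound, particularly near the root, where the shorter subdivision length $k$ lets a probe at $r$ lie within distance $k$ of all $q$ children of $r$ simultaneously. I would handle this either by restricting the robber's safe set to virtual vertices of depth at least two in $T(h,q)$---incurring only a constant-factor loss in $\Phi_V(T(h,q))$ that is absorbed into the $4(2k+1)$ denominator---or by explicitly accounting for root-level probes in the isoperimetric bookkeeping. Once this geometric step is in place, the evolution of the safe set through the $S_1, S_2, S_3$ analysis of Theorem~\ref{thm:isoperimetric} transfers almost verbatim and delivers the claimed lower bound on $\zeta_k(T)$.
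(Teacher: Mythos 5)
Your overall strategy is the same as the paper's: subdivide $T(h,q)$ so that consecutive original vertices are $2k+1$ apart, reduce the $k$-proximity game on the subdivision to the $0$-proximity game on $T(h,q)$ by blocking time into windows of length $2(2k+1)$ and charging each probe to at most two ``virtual'' vertices (whence the $4(2k+1)$ denominator), and then invoke Theorem~\ref{thm:isoperimetric} together with the bound $\Phi_V(T(h,q))\geq\frac{3}{40}(h-2)$ from \cite{iso-peak-trees-Vrto-2010}. Your treatment of the asymptotic statement ($h_j=j^2$, $k_j=j$, $\omega_j\sim\sqrt{2j}$) is correct and clean.

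There is, however, a genuine gap in your construction, and it sits exactly under the step you flag as ``the hard part.'' You shorten the $q$ edges at the root to length $k$ and leave the pendant edges at length $2k+1$; the paper does the opposite, subdividing every internal edge to length $2k+1$ and only the pendant (leaf) edges to length $k$. Both choices give $\rad(T)=h(2k+1)-(k+1)$, but yours falsifies your own ``key geometric fact'': two children of the root are at $T(h,q)$-distance $2$ yet at $T$-distance only $2k$, and a probe at $r$ lies within distance $k$ of all $q+1$ vertices of $\{r\}\cup\text{Children}(r)$. Since the entire $4(2k+1)$ factor rests on each probe location being within distance $2k$ of at most two original vertices, the charging argument fails at the root. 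Your first proposed repair (discard virtual vertices of depth less than two) does not just cost a constant factor: it confines the robber to one of the $q$ root subtrees, replacing $\Phi_V(T(h,q))$ by roughly $\Phi_V(T(h-1,q))\geq\frac{3}{40}(h-3)$, which no longer yields the stated inequality $\zeta_k(T)>\Phi_V(T(h,q))/(4(2k+1))$; your second repair is left unspecified. The fix is simply to adopt the paper's subdivision: then all branch vertices are pairwise at distance at least $2k+1$, so every vertex of $T$ is within distance $2k$ of at most two of them, while the short pendant paths sit inside the radius-$k$ ball of their parent branch vertex, preserving both the height computation and the clean correspondence with the $0$-proximity game. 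With that change, the rest of your argument (the sit-then-walk robber schedule over super-rounds, which plays the role of the paper's $S_{\alpha-1}\cup S_\alpha$ doubling) goes through.
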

\begin{proof}
Let $T$ be the result of taking $T(h,q)$ and subdividing each edge $2k+1$ times, except if the edge contains a leaf vertex, in which case it is only subdivided $k$ times. Note that the resulting tree has height $\mathrm{rad}(T)=h(2k+1)-(k+1)$. The vertices with degree at least $q$ will be called \emph{major} vertices. 

We play the $k$-proximity game on $T$, referred to as the \emph{main game}, while at the same time observing a separate game on $T$ that is played alongside the main game, referred to as the \emph{shadow game}. We assume the robber plays the same moves in both games. Suppose the cops have a winning strategy with $c$ cops when playing the regular game on $T$. This strategy will be converted to a cop strategy of the shadow game on $T$ with $4c(2k+1)$ cops. We will show that if the robber cannot avoid capture in the regular game, then the robber is also captured in the shadow game. 
We will then show that playing the shadow game on $T$ is equivalent to playing the $0$-proximity game on $T(h,q)$. The $0$-proximity game on $T(h,q)$ requires more than $\Phi_V(T(h,q))$ cops for the cops to be able to win by Theorem~\ref{thm:isoperimetric}. The isoperimetric peak has been studied for $q$-ary regular trees of height $h$ in \cite{iso-peak-trees-Vrto-2010}, which showed that $\Phi_V(T') \geq \frac{3}{40}(h-2)$ when $q \geq 4$ and $h \geq 3$. This result will give us the desired bound. 

The rules of the shadow game are as follows. Each cop is a $k$-proximity cop; however, the cops may only play on rounds $\alpha(2k+1)+1$, for $\alpha \geq 0$. During all other rounds, the cops skip their move, and so no cops are played. In addition, the cops may only play on the major vertices of $T$. The rules for the moves and capture of the robber are identical to the regular $k$-proximity game. 

Given a winning cop strategy of the main game on $T$, we will now construct a winning strategy in the shadow game on $T$. 
We partition the rounds of the main game into periods of $2k+1$ rounds. 
For period $\alpha\geq 0$, which runs from round $\alpha(2k+1)+1$ to round $(\alpha+1)(2k+1)$, let $S_{\alpha}$ be the set of major vertices such that some cop was placed on a vertex of distance at most $2k$ from that major vertex during period $\alpha$. 
Note that there are at most two major vertices of distance $2k$ from each vertex, there are $(2k+1)$ rounds per period, and during each round, $c$ cops are played. 
As such, $S_\alpha$ contains at most $2c(2k+1)$ vertices. 
To construct the cop strategy in the shadow game, play cops on the major vertices $S_{\alpha-1} \cup S_\alpha$ on round $\alpha(2k+1)+1$, which uses at most $4c(2k+1)$ cops, which is the number of cops we have for the shadow game. 

We now show that this cop strategy for the shadow game on $T$ will indeed capture the robber. 
Suppose that under some optimal robber strategy in the regular game, the robber would be captured on vertex $v$ by a cop probing vertex $u$ at time $\alpha(2k+1)+\beta$, with $1 \leq \beta \leq 2k+1$, but that the robber (using the same moves as in the regular game) would not be captured in the shadow game at time $(\alpha +1)(2k+1) +1$ or earlier. 
Let $u_1$ and $u_2$ be the two major vertices of distance at most $k$ from $u$ (set $u_1 = u_2 = u$ if $u$ is a major vertex). 
Without loss of generality, we may assume that $v$ has distance at most $k$ from $u_1$. 
As $u_1$ and $u_2$ are probed on round $\alpha(2k+1)+1$ and on round $(\alpha+1)(2k+1)+1$ in the shadow game, the robber will be captured in the shadow game if the robber has distance $k$ or less on either of these two rounds. This means that on round $\alpha(2k+1)+\beta'$, the robber is at a distance at most $k+1-\beta'$ from $u_1$ or $u_2$ if $1 \leq \beta' \leq k+1$, and at a distance at most $\beta'-k-1$ from $u_1$ or $u_2$ if $k+2 \leq \beta' \leq 2k+1$. 
This also implies that the robber could not have landed on $u_1$ or $u_2$ during period $\alpha$ without later being captured in the shadow game on round $(\alpha+1)(2k+1)$. 
As the robber would be captured if it was between $u_1$ and $u_2$ on round $\alpha(2k+1)+1$ and the robber cannot reach $u_1$ or $u_2$ in period $\alpha$ without being later captured, we can assume the robber is not on any vertex in the path of length $2k+1$ from $u_1$ to $u_2$ at any time during period $\alpha$ or was captured. 

The above implies the robber was distance at most $k$ from some other major vertex $u_3 \notin \{u_1,u_2\}$ in round $\alpha(2k+1)+1$, and moved onto vertices of distance at most $k$ from $u_1$ by round $\alpha(k+1) + \beta$. 
Note that this implies $u_3$ has distance $2k+1$ from $u_1$. 
The robber was then captured in the main game. 
We may assume the robber could not have elected to stay on the unique vertex $w$  of distance $k$ from $u_3$ and distance $k+1$ from $u_1$ for the rounds after round $\alpha(2k+1)+1$ until its capture, as we assume the robber avoids capture whenever possible. 
Hence, it must have been that another cop played at distance $k$ from $w$ during period $\alpha$ in the normal game to force the robber to move from $w$.  

If this were the case, then this cop must have played within distance $2k$ from $u_3$ during period $\alpha$ in the main game. 
This implies that $u_3 \in S_\alpha$, but then $u_3$ is played in the shadow game in round $\alpha(2k+1)+1$. 
However, the robber has distance at most $k$ from $u_3$ (this defines $u_3$), and so the robber must have been captured in the shadow game in round $\alpha(2k+1)+1$

To finish the proof, we show that the shadow game on $T$ is equivalent to the $0$-proximity game on $T(h,q)$. 
To see this, suppose in the $0$-proximity game that the robber may be on a vertex in $V_{\alpha-1}'$ on round $\alpha$ immediately before the cops' move, and also suppose in the shadow game that the robber may be on a vertex in $N_k(V_{\alpha-1}')$ on round $\alpha(2k+1)+1$ immediately before the cops' move. 
The cops' play on the set of vertices $U_\alpha$ in both games at this point. 
Note that this satisfies the conditions of the shadow game since these are major vertices in the shadow game and are played on round $\alpha(2k+1) +1$, and also note that there are no valid moves in the shadow game that do not obey these constraints. 
This means that the robber can only be on a vertex in $V_\alpha = V_{\alpha-1}' \setminus U_\alpha$ in the $0$-visibility game if it was not captured. 
Similarly, the robber can only be on a vertex in $N_k(V_{\alpha-1}') \setminus N_k(U_\alpha) = N_k(V_\alpha)$ in the shadow game if it was not captured. 
Now, the robber moves. 

In the $0$-visibility game, the robber moves once and is now on a vertex in $V_\alpha' = N(V_\alpha)$. 
In the shadow game, the robber moves $2k+1$ times, and is now on a vertex in $N_{2k+1}(N_k(V_\alpha)) = N_k(N_{2k+1}(V_\alpha))$. 
Note that in $T$, the vertices of distance $2k+1$ from the major vertices $V_\alpha$ are either the major vertices in $V_\alpha$ or the major vertices $V_\alpha' \setminus V_\alpha$. 
As such, $N_k(N_{2k+1}(V_\alpha)) = N_k(V_\alpha')$. 
This means that in either game, the same moves may be played on both games, and the possible robber locations immediately before the cops play in the shadow game are exactly determinable from the possible robber locations immediately before the cops play in the $0$-visibility game, and vice versa. 
Note that the robber is caught in the $0$-visibility game only if $V_\alpha = \emptyset$ for some $\alpha$, which means that the robber would be on $N_k[\emptyset] = \emptyset$ in the shadow game, meaning the robber would also be caught in the shadow game. 
The reverse also holds, in that if the robber is captured in the shadow game, then $N_k(V_\alpha') = \emptyset$, which implies $V_\alpha' = \emptyset$, and the robber is captured in the $0$-visibility game. 
This completes the proof. 
\end{proof}

We note that a complete $q$-ary tree of height $h'=h(2k_1) - (k+1)$ will contain the subdivision of $T(h,q)$ that we analyzed in Theorem~\ref{thm:lower_tree}, and so Lemma~\ref{lem:subtreebound} yields the following result. 
\begin{corollary}
For $h'\geq 7k+3$ and $q\geq 4$, 
\[\zeta(T(h',q)) > \frac{3}{160} \frac{h'-k}{(2k+1)^2}.\]
\end{corollary}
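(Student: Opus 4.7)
The plan is to exhibit the subdivided tree $T$ from the proof of Theorem~\ref{thm:lower_tree} as a subtree of $T(h',q)$ and invoke Lemma~\ref{lem:subtreebound}. Recall that $T$ is obtained from $T(h,q)$ by replacing each non-leaf edge with a path of length $2k+1$ and each leaf edge with a path of length $k$, so $T$ has radius $h(2k+1)-(k+1)$ and, whenever $h\geq 3$ and $q\geq 4$, satisfies $\zeta_k(T) > \frac{3(h-2)}{160(2k+1)}$.

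Given $h'\geq 7k+3$, I would set $h:=\bigl\lfloor\frac{h'+k+1}{2k+1}\bigr\rfloor$, the largest integer with $h(2k+1)-(k+1)\leq h'$. The hypothesis forces $\frac{h'+k+1}{2k+1}\geq 4$, so $h\geq 4\geq 3$, and Theorem~\ref{thm:lower_tree} applies; a routine floor estimate also gives $h\geq\frac{h'-k}{2k+1}$.

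Next I would build an embedding $\iota: V(T)\to V(T(h',q))$ recursively on the major vertices of $T$. Send the root of $T$ to the root of $T(h',q)$; if a major vertex $u$ of $T$ has been placed at a vertex $v$ of $T(h',q)$, route the $q$ subdivision branches leaving $u$ through $q$ distinct subtrees of $T(h',q)$ rooted at the $q$ children of $v$, using consecutive descendants as a chain of length $2k+1$ (or $k$ for leaf branches). Since the deepest leaf of $T$ lies at depth $h(2k+1)-(k+1)\leq h'$, the chains always fit below $v$, and the image is a connected subgraph, hence a subtree, of $T(h',q)$.

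Lemma~\ref{lem:subtreebound} then yields $\zeta_k(T(h',q))\geq \zeta_k(T)$, and substituting $h-2\geq\frac{h'-k}{2k+1}-2$ into Theorem~\ref{thm:lower_tree} produces the advertised lower bound once the additive constants surviving the floor and the $-2$ shift are absorbed via $h'\geq 7k+3$. The only real obstacle is the bookkeeping in this final step: one must chase the floor carefully enough to be sure that the numerator really simplifies to the stated $h'-k$ (rather than a slightly smaller affine function of $h'$ and $k$), which is why the hypothesis $h'\geq 7k+3$ comes with a generous constant that absorbs the leftover $O(k)$ slack.
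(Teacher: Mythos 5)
Your approach is the same as the paper's: the paper justifies this corollary with a single remark that a complete $q$-ary tree of height $h'=h(2k+1)-(k+1)$ contains the subdivision $T$ of $T(h,q)$ built in Theorem~\ref{thm:lower_tree}, and then invokes Lemma~\ref{lem:subtreebound}. Your embedding of $T$ into $T(h',q)$ and your choice $h=\lfloor (h'+k+1)/(2k+1)\rfloor$ are both fine, and you have correctly isolated the only delicate point, namely the final arithmetic.

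That final step, however, does not close. From $h\geq (h'-k)/(2k+1)$ you get $h-2\geq (h'-5k-2)/(2k+1)$, hence
\[
\zeta_k(T(h',q))\;>\;\frac{3}{160}\cdot\frac{h'-5k-2}{(2k+1)^2},
\]
whose numerator falls short of the stated $h'-k$ by $4k+2=2(2k+1)$; that is, the provable bound is smaller than the claimed one by the fixed positive amount $\frac{3}{80(2k+1)}$. A deficit of this kind cannot be ``absorbed'' by the hypothesis $h'\geq 7k+3$: that hypothesis only guarantees $h\geq 4$ so that Theorem~\ref{thm:lower_tree} applies, and it cannot make a strictly weaker lower bound imply a strictly stronger one, since the gap between the two bounds does not shrink as $h'$ grows. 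Even in the best case, where $2k+1$ divides $h'+k+1$ exactly, the numerator comes out to $h'-3k-1$, still short of $h'-k$. So either the conclusion should be restated with numerator $h'-5k-2$ (or $h'-3k-1$ under a divisibility assumption), or some additional argument is required; your proposal as written asserts the absorption without justification, and that assertion is false. To be fair, the paper supplies no computation here either, and the discrepancy appears to originate in the corollary's statement itself --- note also that it writes $\zeta$ where $\zeta_k$ must be intended, since $\zeta\leq 2$ on trees.
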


This combines with
Theorem~\ref{thm:upper_tree} to yield the following result. 
\begin{corollary}
For $q \geq 4$ and any $k\geq 3$, any complete $q$-ary tree $T$ of height $h\geq4$ has
\[
\zeta_k(T) = \Theta\left(\frac{\mathrm{rad}(T)}{k^2}\right).
\]
\end{corollary}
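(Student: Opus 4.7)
The plan is to obtain $\zeta_k(T) = \Theta(\rad(T)/k^2)$ by sandwiching $\zeta_k(T)$ between the universal upper bound of Theorem~\ref{thm:upper_tree} and the lower bound just established in the preceding corollary. The key elementary observation is that a complete $q$-ary tree of height $h$ has $\rad(T) = h$, with the root realizing the eccentricity minimum, so all references to $\rad(T)$ in the target bound can be replaced by $h$.

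For the upper direction, I would apply Theorem~\ref{thm:upper_tree} directly to obtain
\[\zeta_k(T) \leq \left\lceil \frac{h + k}{k^2} \right\rceil + 1.\]
Since $h \geq k$ in the asymptotic regime of interest, the right-hand side is $O(h/k^2)$, which is $O(\rad(T)/k^2)$, giving the upper bound.

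For the lower direction, I would substitute $h' = h$ into the preceding corollary (invoking that $T$ itself is a complete $q$-ary tree of the required form, so the corollary applies verbatim), giving
\[\zeta_k(T) > \frac{3}{160} \cdot \frac{h - k}{(2k+1)^2},\]
which is $\Omega(h/k^2) = \Omega(\rad(T)/k^2)$. Combining the two inequalities then yields the claim.

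The only real wrinkle is reconciling the regimes in which the two bounds are simultaneously tight: the preceding corollary needs $h \geq 7k+3$ for its lower bound to be numerically nontrivial, and the upper bound's additive $+1$ term is only dominated when $h$ is comparable to $k^2$. For heights in the transitional window $4 \leq h < 7k+3$, both $\zeta_k(T)$ and $\rad(T)/k^2$ are $O(1)$, so the finitely many exceptional cases (for any fixed $k$) are safely absorbed into the implicit constants in the $\Theta$ notation. I do not anticipate any genuine technical obstacle: the proof is essentially two lines of bookkeeping, invoking Theorem~\ref{thm:upper_tree} for the upper bound and the preceding corollary for the lower bound.
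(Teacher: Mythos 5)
Your proposal matches the paper's (implicit) proof exactly: the paper derives this corollary in a single line by combining the lower bound of the immediately preceding corollary with the upper bound of Theorem~\ref{thm:upper_tree}, which is precisely your sandwich argument with $\rad(T)=h$. Your extra discussion of the transitional window $4 \le h < 7k+3$ is, if anything, more care than the paper itself takes.
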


We complete this section with a proof of Theorem~\ref{thm:tree_growth_characterization}.

\begin{proof}[Proof of Theorem~\ref{thm:tree_growth_characterization}]
We begin with the case that $f = \Omega(\sqrt{n})$. There is $\alpha \in \mathbb{R}^+$ and $N \in \mathbb{N}$ such that for all $n \ge N$, $f(n) \ge \alpha\sqrt{n}$. As $f$ is continuous, there are $\beta, \gamma > 0$ such that $\beta \le f(x) \le \gamma$ for all $x \in [1,N]$. By Theorem~\ref{thm:upper_tree}, if $\rad(T_n) < N$,
\[ \zeta_k(T_n) \le \left\lceil\frac{\mathrm{rad}(T_n) + k}{k^2}\right\rceil + 1 \le \frac{N+\gamma}{\beta^2}+2. \]
For $\rad(T_n) \ge N$, we have that
\[ \zeta_k(T_n) \le \left\lceil\frac{\mathrm{rad}(T_n) + k}{k^2}\right\rceil + 1 \le \frac{\rad(T_n)}{f(\rad(T_n))^2}+\frac{1}{f(\rad(T_n))} + 2 \le \frac{1}{\alpha^2}+\frac{1}{\alpha\sqrt{N}} + 2. \]
Thus, by setting
\[ d = \max\left(\frac{N+\gamma}{\beta^2}+2, \frac{1}{\alpha^2}+\frac{1}{\alpha\sqrt{N}} + 2\right) \]
we have that
\[ \limsup_{n \to \infty} \zeta_k(T_n) \le d. \]

If $f = o(\sqrt{n})$, then for each integer $z$, let $(T_{z,n})_{n \ge 1}$ be the sequence of trees guaranteed by Theorem~\ref{thm:lower_tree}. With $k = k(n) = \rad(T_{n,n})$, we have that
\[ \lim_{n \to \infty} \zeta_k(T_{n,n}) \to \infty. \]
The proof follows. \end{proof}

\section{Subdivisions of graphs} \label{sec:subdiv}

The relationship between subdividing a graph and the Localization game has been studied~\cite{car}, and it is known that for any graph $G$, the graph obtained by subdividing each edge of $G$ $3n$ times, $G^{1/3n}$, has $\zeta(G^{1/3n}) = 1$. 
The central idea of the cop's strategy on $G^{1/3n}$ is that if the cop probes all the original vertices of $G$ one at a time, the robber will eventually be identified as being close to a probed vertex, and cannot move to another original vertex of $G$ without the cop identifying its location. 
The issue with extending this strategy to the $k$-visibility Localization game for a fixed $k$ is that typically we will consider graphs where $3n$ is much larger than $k$. 
This renders the strategy from the original game useless since the cop can no longer see the entirety of any of the paths of length $3n$. 
Therefore, we need to find a new strategy for subdividing graphs in the limited visibility game.  

To remedy this, we will instead consider what happens if we subdivide the edges non-uniformly. 
That is to say, we do not necessarily subdivide all the edges the same number of times. 
The advantage to this approach is that if we separate the vertices of a graph into two sets and subdivide all the edges that go from one set to the other a large number of times, then in the proximity game, the cops can first clear these subdivided edges and then focus on clearing one of the sets of vertices. 
It will take a large number of rounds for the contaminated area to spread back over the subdivided edges so the cops can essentially ignore half of the graph while the contamination spreads back over the subdivided edges. Our main results are summarized in the following two theorems.

\begin{theorem} \label{thm:proxsub}
For a tree $T$ and positive integer $k,$ there is a subdivision $T'$ of $T$ with $\mathrm{prox}_k(T') = 1$.
\end{theorem}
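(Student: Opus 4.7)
Proof plan.

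I would proceed by strong induction on $|V(T)|$. The base case $|V(T)|=1$ is immediate: a single probe of the lone vertex captures the robber. For the inductive step, I would split on whether $T$ has a branching vertex. If $T$ is a path, then any subdivision of $T$ is again a path, and a single cop wins the $k$-proximity game by a standard one-directional sweep, probing positions $k+1,3k+1,5k+1,\dots$ from an endpoint and progressively forcing the robber toward the other end until some probe's $k$-ball engulfs them. Otherwise $T$ contains a vertex $v$ of degree at least $3$; letting $T_1,\dots,T_d$ be the components of $T-v$ and $u_i$ the neighbor of $v$ in $T_i$, the inductive hypothesis (in a strengthened form, see below) furnishes subdivisions $T_i'$ with $\prox_k(T_i')=1$ and winning strategies $\sigma_i$ of worst-case length $L_i$. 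I then form $T'$ from the $T_i'$'s by subdividing each edge $vu_i$ into a pendant path of length $M+1$, where $M$ is chosen sufficiently large in terms of $\max_i L_i$ and $k$.

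My proposed cop strategy on $T'$ interleaves two activities. Every $(2k+1)$-st round is a \emph{guard round}, in which the cop probes $v$ itself. In the remaining $2k$ out of every block of $2k+1$ rounds, the cop cycles through the branches in order: for each index $i$ in turn, the cop first performs a path sweep of the pendant path from $v$ toward $u_i$ and then simulates one step of $\sigma_i$ on $T_i'$. The guard probes provide a lock: a robber transiting between two distinct branches must travel at least $(k+1)+(k+1)=2k+2$ steps, whereas only $2k+1$ robber moves occur between consecutive guard probes; consequently either the robber is caught at some guard probe, or they remain trapped in one single branch for the entire game. When the cycle reaches that branch and the cop executes the pendant-path sweep together with $\sigma_i$, the robber is captured.

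The principal obstacle is the overhead the guard probes introduce: from $\sigma_i$'s viewpoint, the robber effectively moves at rate $(2k+1)/(2k)$ steps per simulated cop probe rather than the nominal one. I would therefore strengthen the inductive hypothesis to handle an arbitrary rational robber speed $s\geq 1$: for every tree $T$, every $k\geq 1$, and every such $s$, there is a subdivision $T'$ together with a one-cop winning proximity strategy against an $s$-speed robber. The inductive step then invokes this stronger hypothesis on each $T_i$ with speed $s\cdot(2k+1)/(2k)$; this multiplies by a bounded factor at each recursion level, and since the recursion depth is at most $|V(T)|$, the effective speed and hence the required subdivision size stay finite. A secondary issue is that while $\sigma_i$ runs, the robber may momentarily drift from $T_i'$ back onto the pendant path near $u_i$; I would choose $M$ large enough (roughly $M\gg L_i\cdot(2k+1)/(2k)+k$) so that alternating the pendant-path sweep with $\sigma_i$-simulation within a single branch-handling cycle suffices to repeatedly flush the robber back into $T_i'$ until $\sigma_i$ captures them.
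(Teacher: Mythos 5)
Your overall architecture---recurse at a branching vertex, join the recursively subdivided pieces by very long paths, guard the junction with a periodic probe, and clear the branches one at a time---is essentially the architecture of the paper's proof, and your guard-lock computation ($2k+2$ steps needed to transit versus $2k+1$ robber moves available between consecutive guard probes) is sound. The genuine gap is in the step you yourself flag as a ``secondary issue'': while the cop executes $\sigma_i$ inside $T_i'$, the robber may exit $T_i'$ through $u_i$ onto the pendant path and later re-enter. A winning strategy for the proximity game on $T_i'$ carries no guarantee against a robber that can leave that subgraph and return, so when $\sigma_i$ terminates without a detection the cop cannot conclude that branch $i$ is clear. Your proposed remedy---take $M$ large and ``repeatedly flush the robber back into $T_i'$ until $\sigma_i$ captures them''---has no termination argument: each flush-and-rerun can be evaded by the same exit move, and nothing in the proposal shows that the robber's safe territory shrinks from one iteration to the next. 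This is exactly where the paper does its real work: its recursive clearing procedure for a subtree is defined together with explicit boundary conditions (the robber is not on the connecting path at the start, does not enter from outside during the procedure, and is captured if it leaves), and the number of subdivisions of each edge is then defined in terms of the running times of the already-constructed sub-strategies, so that a robber retreating toward the junction arrives only just as the cop restarts the junction-clearing sweep. To salvage your induction you would need to strengthen the hypothesis not only by a speed parameter but by these entry/exit conditions at the root of each $T_i'$.

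A secondary quantitative slip: the effective slowdown seen by $\sigma_i$ is not $(2k+1)/(2k)$. Your own fix requires interleaving pendant-path re-sweeps (each costing on the order of $M/k$ probes) with the steps of $\sigma_i$, so the robber makes many moves per simulated probe; the speed you must feed to the inductive hypothesis therefore depends on $M$, while $M$ is supposed to be chosen after the (speed-dependent) running time $L_i$ is known. Breaking this circularity is precisely the content of the paper's mutually recursive definitions of the subdivision amounts $x_{ui}$ and the clearing times $y_{ui}$ and $t_u$; as written, your proposal asserts rather than establishes that the recursion closes.
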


\begin{theorem} \label{thm:zetatreesubdivision}
For a tree $T$ and positive integer $k > 1$, there is a subdivision $T'$ of $T$ with $\zeta_k(T') = 1$.
\end{theorem}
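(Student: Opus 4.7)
The plan is to bootstrap from Theorem~\ref{thm:proxsub} and Lemma~\ref{lem:proxplusone}. Those results already yield a subdivision $T_0$ of $T$ with $\prox_k(T_0)=1$, and hence $\zeta_k(T_0)\le 2$. The task is therefore to eliminate the auxiliary cop used in Lemma~\ref{lem:proxplusone}, whose sole purpose is to guard a root $r$ so that the robber cannot escape the subtree it currently inhabits. The first step in the proof is to further subdivide $T_0$, producing $T'$ in which every original edge corresponds to a path of length $L=L(k,|V(T)|)$, where $L$ is much larger than the number of rounds the $\prox_k$-strategy requires in any single subtree. Root $T'$ at an original vertex $r$.

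The strategy of the single cop, and the core of the induction on $\rad(T')$, is to interleave two phases. In the ``guarding'' phase the cop probes a vertex $g_i$ at distance exactly $2$ from $r$ on the $i$th spoke of $r$, cycling through all spokes; because $k\ge 2$, this probe returns a finite distance whenever the robber is within distance $2$ of $r$ on that spoke, so the cop detects any attempted crossing of $r$. In the ``searching'' phase the cop executes, on the current subtree rooted at $r$, the single-cop proximity strategy guaranteed by Theorem~\ref{thm:proxsub}. The cop alternates guarding and searching moves at a rate dictated by $L$: since each spoke of $r$ starts with a subdivided path of length $L$, the robber needs at least $L$ rounds to traverse from one spoke to another through $r$, which is far more than the time the cop needs to revisit every $g_i$. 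Hence the robber cannot switch spokes without being detected, and the cop can safely focus the search on a single spoke/subtree.

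Once the searching phase succeeds, some probe at a vertex $v$ returns a distance $d\le k$. Since every edge of $T'$ has been subdivided to length $L\gg k$, the ball of radius $k$ around $v$ intersects at most a bounded number of branches (and for $v$ in the middle of a subdivided edge, only two). The candidate set of the probe therefore lies on a very short portion of a long path emanating from $v$ in each direction, and the cop now performs the standard one-cop disambiguation on a path: probe a vertex at distance roughly $d$ from $v$ along one direction and use the resulting distance (or $\ast$) to rule out all but one candidate; this is where $k\ge 2$ is essential, as it lets the follow-up probe remain in visibility range of both symmetric candidates simultaneously. This either captures the robber or confines it to a strict subtree of smaller radius, where the induction closes. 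The principal obstacle is balancing the subdivision length $L$ against the round-counts of the guarding cycle, the proximity search, and the final disambiguation, and then checking that the resulting schedule really does prevent the robber from using the delay between guard probes to cross $r$; the rest of the argument is routine once $L$ is chosen to dominate all three of these quantities.
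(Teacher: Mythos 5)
Your proposal starts from the right place (Theorem~\ref{thm:proxsub}), but the confinement-based architecture you build on top of it has a genuine gap in the guarding phase. To cross from one spoke of $r$ to another, the robber does \emph{not} need to be detectable for $L$ rounds; it only needs to pass through the detection region of your guard probes, and that region is tiny. A probe at $g_i$, distance $2$ from $r$ on spoke $i$, sees a robber crossing between two \emph{other} spokes only while that robber is within distance $k-2$ of $r$, which is a window of $2(k-2)+1$ consecutive rounds --- exactly one round when $k=2$. An omniscient robber simply waits at distance $k+3$ from $r$ until your guard cycle (whose period is at least $2\deg(r)$ rounds, and longer once search moves are interleaved) has just passed, then dashes through. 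The long subdivision length $L$ is irrelevant here: the robber spends those $L$ rounds far from every guard vertex. The paper avoids this problem entirely by never guarding: Theorem~\ref{thm:proxsub} already gives a one-cop win in the \emph{proximity} game with no auxiliary cop, so the only remaining task is converting ``the cop has seen the robber'' into ``the cop has located the robber.''

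That conversion is the second place your sketch falls short. The hard case after a successful proximity probe is not two candidates on a path but a candidate set spread over up to $\Delta(T)$ branches at a high-degree vertex, and that set grows every round the cop spends disambiguating. The paper handles this by first subdividing every edge of $T$ uniformly $\frac{Nk}{2}+k+1$ times (with $N$ chosen so the harmonic number $H_N \ge \frac{4\Delta(T)}{k}$) so that $T'$ is locally a spider, then running the spider strategy of Lemma~\ref{lem:spider} and using a harmonic-sum accounting --- the first $\frac{k}{4}$ branches cost $2$ rounds each, the next $\frac{k}{8}$ cost $4$ each, and so on --- to show all branches are cleared before the robber can travel the $\frac{Nk}{2}+1$ steps needed to reach another high-degree vertex. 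Your ``standard one-cop disambiguation on a path'' does not engage with this branching race at all, and it is precisely where the quantitative choice of subdivision length enters. (A smaller issue: you apply the Theorem~\ref{thm:proxsub} subdivision first and lengthen edges afterwards; the paper does the uniform subdivision first and then applies Theorem~\ref{thm:proxsub}, so that the proximity strategy's round counts are computed for the final tree.)
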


We begin with the proof of Theorem~\ref{thm:proxsub}. In the following, let $\text{Children}(u)$ denote the set of children of $u$ in some rooted tree, $\text{Parent}(u)$ denote the parent vertex of $u$, and $\text{Desc}(u)$ be the set of descendants of $u$ (including $u$ itself). 

\begin{proof}[Proof of Theorem~\ref{thm:proxsub}]
    Assume that $k < \rad(T)$, or else the problem is trivial. 
    Choose a vertex $r$ of $T$ with distance at most $\rad(T)$ from any other vertex as the root of $T$. 
    Each vertex will be labeled with a word on the alphabet of the non-negative integers. 
    The root is labeled $\varepsilon$. 
    We iteratively label the vertices of $T$ by labeling the children of a vertex $u$ as $ui$, for $1 \leq i \leq |\text{Children}(u)|$. See Figure~\ref{fig:Vertex_labelled_tree} for an example of this labeling.

        \begin{figure}[t]
    \centering
    \begin{tikzpicture}[scale=0.5]
    \GraphInit[vstyle=Classic]
    \SetUpVertex[FillColor=white]

    \tikzset{VertexStyle/.append style={minimum size=8pt, inner sep=1pt}}
    
    \Vertex[x=-1,y=1,L={$\varepsilon$}]{V0}
    

    \Vertex[x=-6,y=-2,L={$3$}]{V02}
    
    \Vertex[x=0,y=-2,L={$2$}]{V01}

    \Vertex[x=4,y=-2,L={$1$}]{V00}
   

    \Vertex[x=-8,y=-4,L={$33$}]{V022}
    
    \Vertex[x=-6,y=-4,L={$32$}]{V021}

    \Vertex[x=-4,y=-4,L={$31$}]{V020}
    
    \Vertex[x=-1,y=-4,L={$22$}]{V011}
    
    \Vertex[x=1,y=-4,L={$21$}]{V010}
    
    \Vertex[x=4,y=-4,L={$11$}]{V000}


    \Vertex[x=-6,y=-6,L={$321$}]{V0210} 
    
    \Vertex[x=4,y=-6,L={$111$}]{V0000}


    \Vertex[x=-8,y=-8,L={$3212$}]{V02101} 
    \Vertex[x=-4,y=-8,L={$3211$}]{V02100} 
    
    \Vertex[x=4,y=-8,L={$1111$}]{V00000}


    \Edge(V0)(V00)
    \Edge(V0)(V01)
    \Edge(V0)(V02)
    
    \Edge(V00)(V000)
    \Edge(V000)(V0000)
    \Edge(V0000)(V00000)
    
    \Edge(V01)(V011)
    \Edge(V01)(V010)
    
    \Edge(V02)(V022)
    \Edge(V02)(V021)
    \Edge(V02)(V020)
    
    \Edge(V021)(V0210)
    
    \Edge(V0210)(V02101)
    \Edge(V0210)(V02100)

\end{tikzpicture}

    \caption{A tree labeled according to the proof of Theorem~\ref{thm:proxsub}.}
    \label{fig:Vertex_labelled_tree}
\end{figure}
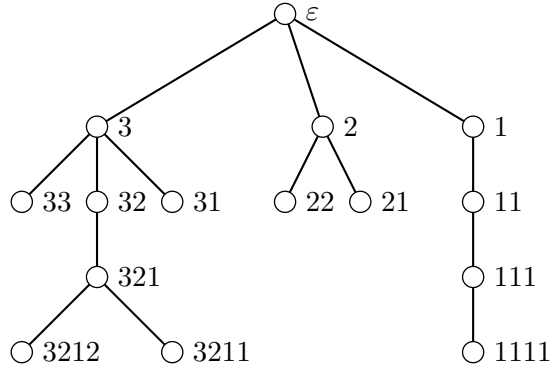

For each edge $(\text{Parent}(v),v)$ in $T$, we need to determine $x_v$, the number of times we will subdivide this edge when constructing $T'$. 
To calculate each of the values of $x_{v}$ required, we will provide a strategy in $T'$. 
As this strategy plays out, it will give lower bounds on each of the $x_{v}$, which will be sufficient for the strategy to be successful. 
 Let $P_{v}=(v_0=\text{Parent}(v),v_1,v_2, \ldots, v_{x_{v}+1}=v)$ denote the path of length $x_{v}+1$ in $T'$ between $\text{Parent}(v)$ and $v$, 
and also denote the subpath of $P_{v}$ with the first $k+1$ vertices removed as $P'_{v}=(v_{k+1},v_{k+2}, \ldots, v_{x_{v}+1})$. 

To define each of these $x_v$, it is necessary to know the amount of time taken by the given strategy to clear the descendant subtree of any given vertex. 
For any vertex $u$ in $T$, let $t_u$ be the number of rounds needed to ensure the robber is 
not on the subtree $\text{Desc}(u)$ while assuming the following: 
\begin{enumerate}
\item before the first round, $P_u$ does not contain the robber; 
\item the robber does not move from a vertex outside of $\text{Desc}(u)\cup P'_{u}$ onto a vertex in $\text{Desc}(u)\cup P'_{u}$ during these $t_u$ rounds; and 
\item the cop must ensure that if the robber moves from a vertex in $\text{Desc}(u)$ to a vertex outside of $\text{Desc}(u)$, then the robber will be captured by the end of the $t_u$ rounds.
\end{enumerate} 

The strategy on $T'$ is defined recursively as follows. 
Let $u$ be some vertex of $T$ such that the robber is known to be on some vertex in $\text{Desc}(u)$.
For each $i$, starting with $i=|\text{Children}(u)|$ and decreasing $i$ by one each time until $i=1$, the cop plays to ensure that the edges along the path $P_{ui} = (u_0=u,u_1,u_2, \ldots, u_{x_{ui}+1}=ui)$ are clear while ensuring that the robber cannot move from an uncleared path onto $u$ without being captured. 
To do this, in round 1 of this procedure, the cop probes $u=u_0$. This ensures the robber is distance at least $k+1$ from $u$.
The robber now moves to a vertex of distance at least $k$ from $u$. 
On round $t$ for $2 \leq t \leq 2k+1$, the cop probes the vertex $u_{(2t-2)k}$ of path $P_{ui}$ if $t\cdot k<x_{ui}+1$, and probes $ui$ otherwise. 
In the former case, the robber can only be on a vertex $u_j$ if $j \geq (2t-1)k+1$. 
The robber moves and may move to a vertex between $u_{(2t-1)k}$ and $ui$ at the end of round $t$. 
Since during round $t+1$, the cop probes $u_{2tk}$, which has distance at most $k$ to the vertices $u_{(2t-1)k}, \ldots, u_{(2t+1)k}$. 
This is unless the cop probes vertex $ui$, but this only occurs when $ui$ is within distance $k$ from all vertices $u_{(2t-1)k}, \ldots, ui$. 
If the robber was on an uncleared path extending from $u$, then it would take the robber until round $k+1$ to move to $u$. 
Thus, by the end of round $2k+1$, the robber may have moved through $u$ to any vertex of distance $k$ from $u$, but no further. 
Also, the robber may be on a vertex between $u_{4k^2+1}$ and $ui$, inclusive. 

We now repeat the procedure, starting with the cop probing vertex $u$ in round $(2k+1)+1$. In round $(2k+1)+t$ with $2 \leq t \leq 2k+1$, the cop probes the vertex $u_{4k^2 + (2t-1)k}$ of path $P_{ui}$ if $4k^2 + (2t-1)k<x_{ui}+1$, and probes $ui$ otherwise. 
We continue repeating this procedure, with the cop probing either vertex $u_{(4k^2+k)\alpha-k}$ or $ui$ at the end of the $\alpha$ repetition of the procedure, at which point $(2k+1)\alpha$ rounds have taken place.
The first repetition where $ui$ will be probed is when $\alpha = \lceil \frac{x_{ui}+1+k}{4k^2+k}\rceil$, and so we consider $P_{ui}$ cleared after these $y_{ui}=(2k+1)\lceil\frac{x_{ui}+1+k}{4k^2+k}\rceil \leq \frac{x_{ui}}{2k-1}$ rounds, although we note that those vertices of distance at most $k$ from $u$ may contain the robber at this point. These vertices of distance at most $k$ from $u$ will be cleared during the next cop move, unless they are already clear.

        \begin{figure}[H]
    \centering
    \begin{tikzpicture}[scale=0.5]
    \GraphInit[vstyle=Classic]
    \SetUpVertex[FillColor=white]

    \tikzset{VertexStyle/.append style={minimum size=8pt, inner sep=1pt}}
    
    \Vertex[x=-1,y=1,NoLabel=true]{V0}
    

    \Vertex[x=-6,y=-2,NoLabel=true]{V02}
    
    \Vertex[x=0,y=-2,NoLabel=true]{V01}

    \Vertex[x=4,y=-2,NoLabel=true]{V00}
   

    \Vertex[x=-8,y=-4,NoLabel=true]{V022}
    
    \Vertex[x=-6,y=-4,NoLabel=true]{V021}

    \Vertex[x=-4,y=-4,NoLabel=true]{V020}
    
    \Vertex[x=-1,y=-4,NoLabel=true]{V011}
    
    \Vertex[x=1,y=-4,NoLabel=true]{V010}
    
    \Vertex[x=4,y=-4,NoLabel=true]{V000}


    \Vertex[x=-6,y=-6,NoLabel=true]{V0210} 
    
    \Vertex[x=4,y=-6,NoLabel=true]{V0000}


    \Vertex[x=-8,y=-8,NoLabel=true]{V02101} 
    \Vertex[x=-4,y=-8,NoLabel=true]{V02100} 


    \Edge[label={$6$}](V0)(V01)
    
    \Edge[label={$6$}](V01)(V011)
    \Edge[label={$0$}](V01)(V010)
    
    \Edge[label={$0$}](V0)(V00)
    
    \Edge[label={$0$}](V00)(V000)
    
    \Edge[label={$0$}](V000)(V0000)    
    
    \Edge[label={$12$}](V0)(V02)
    
    \Edge[label={$12$}](V02)(V022)
    \Edge[label={$6$}](V02)(V021)
    \Edge[label={$0$}](V02)(V020)
    
    \Edge[label={$0$}](V021)(V0210)
    
    \Edge[label={$6$}](V0210)(V02101)
    \Edge[label={$0$}](V0210)(V02100)

\end{tikzpicture}

    \caption{The tree from Figure~\ref{fig:Vertex_labelled_tree} with labels representing the number of subdivisions for a $k=2$ strategy.}
    \label{fig:subdivided_tree}
\end{figure}
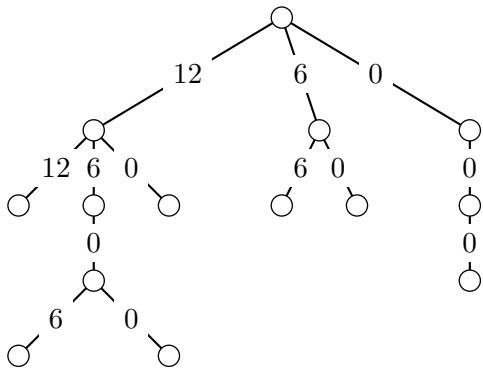

This is repeated for each $i$, from $i = |\text{Children}(u)|$ until $i=1$, at which point all the paths $P_{ui}$ have been cleared, noting that this cop strategy does allow the robber to enter a path $P_{ui}$ through the vertex $ui$ after this path had been cleared. 
Define $x_{u0}=1$, and 
\[
x_{ui} = \left(\sum_{i'<i}y_{ui'}\right) + \max_{i'<i}\{t_{ui'}\},  
\]
noting that this is well defined since $x_{ui'}$ is defined for $i'<i$, since $y_{ui'}$ is defined once $x_{ui'}$ is defined, and since the $t_{ui'}$ are defined through the induction. 
Notice that after we have cleared path $P_{ui}$ in the above procedure, there will be a further $\sum_{i'<i}y_{ui'}$ rounds until all the paths $P_{ui}$ have been cleared, and so the robber may have moved along $P_{ui}$ from $ui$ to be at the vertex of distance $x_{ui} - \sum_{i'<i}y_{ui'} = \max_{i'<i}t_{ui'}$ from $u$, but no closer. 
Thus, after all of these paths have been cleared, in the next $t_{uj}$ rounds, we may clear the subtree $\text{Desc}(uj)$, where $j$ is the smallest index such that the robber may still be in $\text{Desc}(uj)$. 
At this point, the robber may have moved close to the root but has only just had enough time to reach the root. 
We then repeat this procedure from the start, again clearing the paths $P_{ui}$ for all $i$, and then clearing the next subtree $\text{Desc}(u(j+1))$, until all subtrees have been cleared. 
Since the robber only has enough time to reach $u$ before we restart, the robber cannot move between two subtrees, ensuring that the tree will be cleared by the end of this procedure. 
The robber will be captured using this procedure if it started in $\text{Desc}(u)$. 
Define $t_u$ to be the length of time it took to run this procedure. 

Performing this iterative approach, starting with $u$ to be the root vertex, each edge $(v,vi)$ of the graph has its parameter $b_{vi}$ explicitly defined. 
The tree $T'$ with edge $(v,vi)$ subdivided $b_{vi}$ times requires exactly one cop to win, and so the proof is complete. 
\end{proof}

As an immediate implication of Lemma~\ref{lem:proxplusone} and Theorem~\ref{thm:proxsub}, it is possible to subdivide any tree to achieve $\zeta_k(T) \leq 2$, but when is it possible for $\zeta_k(T)$ to be $1$? If we consider the structure of a tree in which every edge has been subdivided a large number of times, then we can see that locally the tree will resemble a spider. This will prove advantageous for the cop since anytime $k \geq 2$, spiders only require a single cop, as we will see in Lemma~\ref{lem:spider}.

\begin{lemma} \label{lem:spider}
If $S$ is a spider and $k \geq 2$, then $\zeta_k(S) = 1$.
\end{lemma}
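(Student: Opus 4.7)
The plan is to give an explicit winning strategy for a single $k$-visibility cop on an arbitrary spider $S$. Write $c$ for the center of $S$ and $L_1, \ldots, L_m$ for the legs, denoting the vertices of $L_j$ by $c = v_{j,0}, v_{j,1}, \ldots, v_{j,\ell_j}$. The strategy I would analyze is the cyclic probe pattern $c, v_{1,\ell_1}, c, v_{2,\ell_2}, \ldots, c, v_{m,\ell_m}$, repeated indefinitely.

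Two structural observations will drive the analysis. First, a probe of $c$ that returns the finite distance $d \leq k$ narrows the robber's candidate set to the depth-$d$ shell $\{v_{j,d}\}_{j=1}^{m}$ (and captures if $d = 0$). Second, a probe of $v_{j, \ell_j}$ on a long leg ($\ell_j > k$) returning a finite distance $d$ uniquely pins the robber to $v_{j, \ell_j - d}$, since every vertex not on $L_j$ lies at distance $\ell_j + a > k$ from $v_{j,\ell_j}$. For a short leg ($\ell_j \leq k$), the analogous probe reaches vertices on neighboring legs too, but the hypothesis $k \geq 2$ ensures that after the robber's one-step move between probes, the candidate set can still be cleanly partitioned into ``on leg $L_j$'' versus ``off leg $L_j$.'' The center probes every other round also prevent the robber from dwelling at $c$ and from hopping between legs through $c$ without being caught.

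I expect the proof to proceed by tracking the candidate set round by round across a full cycle through the legs. A careful case analysis should show that every cycle either captures the robber or strictly reduces the set of legs still possible for the robber. Since there are only $m$ legs, after finitely many cycles the robber must be confined to a single leg $L_j$; from that point on, the problem reduces to localization on the path $L_j$ with one $k$-visibility cop, which is accomplished by a standard sweep from the leaf $v_{j,\ell_j}$ inward. The main obstacle I anticipate will be the detailed bookkeeping of candidate sets, especially for short-leg and mixed-leg-length cases where a single probe can return finite distances from multiple legs; the hypothesis $k \geq 2$ will need to be invoked specifically to disambiguate those cases after the robber's move, which is what distinguishes the $k \geq 2$ setting from the $k = 1$ setting (where the $1$-visibility localization number of trees is known to be unbounded, as in Theorem~\ref{thm:trees_unbounded}).
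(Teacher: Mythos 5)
There is a genuine gap in the proposed strategy: the cyclic probe pattern $c, v_{1,\ell_1}, c, v_{2,\ell_2}, \ldots, c, v_{m,\ell_m}$ only ever probes the center and the leaves, so on any leg with $\ell_j \geq 2k+2$ the interior vertices $v_{j,k+1}, \ldots, v_{j,\ell_j-k-1}$ are at distance greater than $k$ from every vertex you ever probe. A robber who simply sits at such a vertex returns $\ast$ on every round, the candidate set never shrinks to a single vertex, and no leg is ever eliminated. Consequently the claimed invariant --- that each cycle either captures the robber or strictly reduces the set of legs the robber could occupy --- has no supporting mechanism: a probe returning $\ast$ only excludes the ball of radius $k$ around the probed vertex for that round, and the surviving candidate set on a long leg is nonempty and invariant under the robber staying put. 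The ``standard sweep from the leaf inward'' that you defer to the endgame is in fact the essential ingredient and must happen \emph{before} any confinement to a single leg can be established, not after.

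The paper's proof does exactly this: the cop sweeps the first leg by probing the leaf, then the vertex at distance $k$ from the leaf, then $2k$, and so on, so that every vertex of the leg is within distance $k$ of some probe during the sweep and any finite distance returned pins the robber to a unique vertex of that path. For each subsequent leg the cop alternates probes of the center with an inward sweep of that leg (the alternation with $c$ is what prevents the robber from slipping through the center into an already-cleared leg), and the hypothesis $k \geq 2$ is used at the specific moment the center probe returns a distance $\ell$: the cop then probes the vertex at depth $\ell+1$ on an uncleared leg, and since the robber has moved at most one step, it lies within distance $2 \leq k$ of that probe if it is on that leg. Your two structural observations (the shell interpretation of a center probe, and the unique localization from a leaf probe on a long leg) are correct and reusable, but the strategy built on them must include interior probes of each leg to close the argument.
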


\begin{proof}
To find the robber, the cop uses the following strategy. First, they select a branch arbitrarily and probe its leaf, then the vertex at distance $k$ from the leaf, then distance $2k$ from the leaf, and continue until the unique vertex with degree greater than $2$ is within distance $k$ of the cop. If the robber was visible at any point before the cops' last probe, then their location is determined since the cop had exactly one possible robber location at each visible distance. If the cop sees the robber on their probe that is within distance $k$ and the robber's position is not immediately determined, then they know the robber is on the other side of the vertex of degree greater than $2$, and so they can continue as if they had probed the unique vertex of degree at least $3$, which will be covered in a later case. 

Now the cop selects a new branch and alternates between probing the high degree vertex and probing the leaf, then the vertex at distance $k-1$ from the leaf, and so on until that branch is cleared or the cop sees the robber while on the high degree vertex. If the cop sees the robber with distance $\ell$ while on the high-degree vertex, then on their next move, they select an arbitrary uncleared branch and probe the vertex at distance $\ell+1$ from the high-degree vertex. If the robber was on that branch, then they are immediately located. If there is no vertex at distance $\ell+1$ from the high-degree vertex on that branch, then the cop instead probes the leaf on this branch and achieves the same outcome.\end{proof}

Now consider what happens if before we subdivide to reduce the $k$-proximity number to $1$, we subdivide the tree so that all vertices of degree greater than $2$ are very far apart. The tree is now locally a spider, and if the edges of the tree have been subdivided enough times, the robber will not be able to escape to another high-degree vertex before the cop locates them. This is exactly the strategy described below in the proof of Theorem~\ref{thm:zetatreesubdivision}. 

\begin{proof}[Proof of Theorem~\ref{thm:zetatreesubdivision}]
Take every edge of $T$ and subdivide it $\frac{Nk}{2} + k + 1$ times, where $N$ is the smallest value such that $H_N$ (the $N^{th}$ harmonic number) is at least $\frac{4\Delta(T)}{k}$. Next, apply the subdivision described in Theorem~\ref{thm:proxsub} to obtain $T'$. The tree $T'$ now satisfies $\mathrm{prox}_k(T') = 1$ and the vertices of degree greater than $2$ in $T'$ are all sufficiently far apart for our purposes. 
When the cop wins the proximity game and sees the robber, we assume the cop is on a high degree vertex (degree greater than $2$) or the cop is not on a high degree vertex but the distance from the cop to the robber is greater than the distance from the cop to the nearest high degree vertex. 
We do this since if the cop sees the robber far from a high-degree vertex then the strategy plays out similarly. Now the cop uses the fact that this is locally a spider graph and applies the second part of the strategy from Lemma~\ref{lem:spider}, but rather than using the end of the branch the cop uses the furthest vertex the robber could be on. If the robber cannot evade for more than $\frac{Nk}{2} + 1$ rounds after being seen, then the cop can use this strategy to eventually capture the robber.

We now show that the robber cannot evade for more than $\frac{Nk}{2} + 1$ rounds once the cop has won the proximity game. First, consider how long it will take to clear the first branch. Since the robber could have initially been on only one vertex on the branch, there are now three possible locations, and since $k \geq 2$ the cop can clear this in one move. For the first $\frac{k}{4}$ branches, the cop will be able to do this, and if we account for the fact that every other round the cop will probe the high-degree vertex, we can see that this will take at most two rounds per branch or $\frac{k}{2}$ rounds total. At this point, the contamination stretches across a length of $k+1$ on each branch, and it will now take two rounds to clear each branch, which becomes at most four rounds once we include the rounds where the cop probes the high degree vertex. We can see that $\frac{k}{8}$ branches can be cleared like this before it takes more moves, then the contamination will stretch across a length of $2k+1$ on each branch while the cop clears the next $\frac{k}{12}$ branches in six rounds each, and so on. Continuing this pattern, we can see that to clear all the branches in time, it must be that $\frac{k}{4} \sum_{i=1}^N \frac{1}{i} \geq \Delta(T)$ which implies that we need $H_N \geq \frac{4\Delta(T)}{k}$. Thus, the furthest the robber could have moved is $\frac{Nk}{2} + 1$, which gives the desired result.
    \end{proof}

Since any graph can be subdivided enough to make a graph that is locally a spider, we can apply the concept from Theorem~\ref{thm:zetatreesubdivision} any time there is a class of graphs which is closed under subdivisions and where it is possible to subdivide enough to reduce $\mathrm{prox}_k$ to $1$. This is formalized in Corollary~\ref{cor:subdivclass}.

\begin{corollary} \label{cor:subdivclass}
For any family of graphs $\mathcal{F}$ that is closed under subdivisions, if for all $G \in \mathcal{F}$ there is an integer $k > 1$ such that there exists a subdivision $G'$ of $G$ with $\mathrm{prox}_k(G') = 1$, then there exists a graph $G'' \in \mathcal{F}$ which is also a subdivision of $G$ such that $\zeta_k(G'') = 1$. 
\end{corollary}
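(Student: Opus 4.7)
The plan is to mimic the proof of Theorem~\ref{thm:zetatreesubdivision}, replacing the use of Theorem~\ref{thm:proxsub} with the hypothesis on $\mathcal{F}$. Given $G\in\mathcal{F}$, I would construct $G''$ in two stages. First, I would subdivide every edge of $G$ a large, uniform number of times — say $\tfrac{Nk}{2}+k+1$ where $N$ is the smallest integer with $H_N\ge 4\Delta(G)/k$ — to obtain an intermediate graph $G_0$. Because $\mathcal{F}$ is closed under subdivisions, $G_0\in\mathcal{F}$, and in $G_0$ any two vertices of degree greater than $2$ lie at distance at least $\tfrac{Nk}{2}+k+1$. Second, I would apply the hypothesis of the corollary to $G_0$: there is a subdivision $G''$ of $G_0$ with $\prox_k(G'')=1$. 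Since $G''$ is a subdivision of $G_0$, it is a subdivision of $G$, and $G''\in\mathcal{F}$. Note that the second subdivision only inserts new degree-$2$ vertices, so the separation between vertices of degree greater than $2$ in $G''$ is no smaller than in $G_0$.

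Next I would describe the cop's strategy. The single cop first executes the $k$-proximity winning strategy on $G''$; after finitely many rounds, some probe returns a distance at most $k$ from the robber. From that moment the local neighborhood of the cop (within the $\tfrac{Nk}{2}+k+1$-ball) is a subdivided star, i.e.\ a spider, because the separation of high-degree vertices guarantees no other branching point lies within that ball. The cop then applies the strategy of Lemma~\ref{lem:spider}, treating the local spider structure as though it were the whole graph and using the furthest possible robber location in place of the spider's leaves.

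The key quantitative point is that the robber cannot move more than $\tfrac{Nk}{2}+1$ steps between the moment it is first detected and the moment it is captured. This is exactly the bound extracted at the end of the proof of Theorem~\ref{thm:zetatreesubdivision}: the cop can process roughly $k/(4i)$ branches in rounds of duration $2i$, and clearing all at most $\Delta(G)$ branches incident to the nearest high-degree vertex requires $\sum_{i=1}^{N}\tfrac{k}{4i}\ge\Delta(G)$, giving $H_N\ge 4\Delta(G)/k$. Since we subdivided each edge of $G$ at least $\tfrac{Nk}{2}+k+1$ times, the robber cannot reach a distinct high-degree vertex during those $\tfrac{Nk}{2}+1$ rounds, and so the local spider structure persists throughout the capture phase.

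The main obstacle is verifying that the spider-chase analysis from Theorem~\ref{thm:zetatreesubdivision}, which was written for trees, still applies when $G$ has cycles. The decisive observation is that the analysis is entirely local: it only uses the number of paths emanating from a single high-degree vertex and the length of each such path before the next branching. Because we enforced the separation in $G_0$ using $\Delta(G)$ in place of $\Delta(T)$, each detection event happens along a genuine subdivided star, and the proof of Theorem~\ref{thm:zetatreesubdivision} transfers verbatim. Hence $\zeta_k(G'')=1$, which completes the argument.
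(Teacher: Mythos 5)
Your proposal is correct and takes essentially the same route the paper intends: the paper offers no separate proof of Corollary~\ref{cor:subdivclass}, asserting only that ``the concept from Theorem~\ref{thm:zetatreesubdivision}'' applies, and your two-stage construction (a uniform subdivision of $G$ to separate branch vertices, followed by the hypothesized subdivision forcing $\prox_k=1$) together with the proximity-then-local-spider chase is exactly that concept made explicit. Two small points to tighten: to conclude that the ball of radius $\tfrac{Nk}{2}+k+1$ around a branch vertex of $G''$ is a spider, the absence of other branch vertices is not quite enough when $G$ has cycles through a single high-degree vertex --- you also need the ball to contain no cycle, which holds because every cycle of $G$ has at least three edges, each subdivided at least $\tfrac{Nk}{2}+k+1$ times, so the girth of $G''$ exceeds twice the ball's radius. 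Also, when you apply the hypothesis to $G_0$ you implicitly reuse the same $k$ that calibrated the uniform subdivision, whereas the hypothesis as literally written only supplies some $k>1$ per member of $\mathcal{F}$; this is an ambiguity in the corollary's statement itself (the paper's cycle example treats $k$ as fixed for the whole family), and your argument is sound under that reading.
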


As an example, the class of cycle graphs $\{C_n\}_{n \geq 3}$ is closed under subdivisions, and every cycle can be subdivided in a way that reduces $\mathrm{prox}_2$ to $1$. Therefore, by Corollary~\ref{cor:subdivclass} it is possible to subdivide any cycle in a way that reduces $\zeta_2$ to $1$.

A question that arises from the statement of Theorem~\ref{thm:zetatreesubdivision} is why $k$ is now greater than $1$, unlike in Theorem~\ref{thm:proxsub} where $k$ could be any positive integer. It can be shown that the class of graphs $G$ with $\zeta_1(G) = 1$ is exactly the set of caterpillars~\cite{mm}. As a result, it is not always possible to subdivide a tree to reduce $\zeta_1$ to $1$.

\section{Cartesian Grids}

In this section, we provide a lower and an upper bound on the $k$-visibility localization number for the $n \times n$ Cartesian grid graphs, which we label $G_{n, n}$. These bounds are close to being tight, and for $n$ sufficiently large, we find the exact value of the $k$-visibility localization number for these graphs in most cases, and we find it is one of two values in the other cases, as in the following theorem. 

\begin{theorem} \label{thm:main_grids_results}
Let $n$ and $k$ be positive integers with $n\geq 2k^2+2k+1$ when $k\geq 2$, and $n\geq 2(2k^2+2k+1)=10$ when $k=1$. 
If $n\pmod{2k^2+2k+1} \in [1,2k^2]$, then \[\zeta_k(G_{n, n}) = \Big\lfloor\frac{n}{2k^2+2k+1}\Big\rfloor.\]
Otherwise, 
\[\zeta_k(G_{n, n}) \in \Big\{\Big\lfloor\frac{n}{2k^2+2k+1}\Big\rfloor,\Big\lfloor\frac{n}{2k^2+2k+1}\Big\rfloor+1\Big\}.\] 
\end{theorem}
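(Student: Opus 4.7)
Set $N := 2k^2+2k+1$, which equals $\Delta_k(G_{n,n})$ and is the size of the closed $k$-ball at any interior vertex of the grid. Write $n = qN + r$ with $0 \le r < N$; the theorem claims $\zeta_k(G_{n,n}) = q$ precisely when $r \in [1,2k^2]$, and $\zeta_k(G_{n,n}) \in \{q,q+1\}$ otherwise. The plan is to establish matching upper and lower bounds, with most of the work going into the upper bound.

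\textbf{Upper bound via a moving diagonal barrier.} I would construct an explicit strategy for $q$ (in favorable residues) or $q+1$ (in general) cops. Place the cops on vertices of a discrete anti-diagonal, spaced so that their $k$-visibility diamonds tile a ``slab'' traversing the grid from one side to the other, then advance this slab one step per round. Each round alternates between a sweep phase (all probes return $\ast$, and the barrier shifts by one vertex into the contaminated region, enlarging the cleared region by one row) and a localization phase (triggered the moment any probe returns a finite distance). Localization is tractable because the sphere at a fixed distance $d \le k$ from a cop contains at most $4d$ vertices in the grid, so one or two additional probes from a neighboring cop or from the same cop after a shift by one step will disambiguate the surviving candidates. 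The case split on $r \bmod N$ reflects how cleanly the slab width fits against the grid boundary: for $r \in [1, 2k^2]$ the thin residual strip at the far end is absorbed by the partial diamond of the corner cop (whose $k$-ball is truncated by the grid boundary and so covers fewer vertices but also has fewer vertices to worry about), so that $q$ cops suffice; for $r \in \{0\} \cup [2k^2+1, N-1]$ the leftover strip is either empty or misaligned with the tiling, and the argument forces us to allow a $(q+1)$-th cop.

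\textbf{Lower bound.} I would deploy the proximity-game lower bound via Theorem~\ref{thm:isoperimetric}, giving $\zeta_k(G_{n,n}) \ge \prox_k(G_{n,n}) > \Phi_V(G_{n,n})/N$, combined with a careful estimate of $\Phi_V$ on the grid. Exhibiting extremal sets such as triangular corners $\{(i,j):i+j \le m\}$ and half-strips $\{(i,j):j \le m\}$ bounds $\Phi_V(G_{n,n})$ from below by a quantity of order $n$, which after dividing by $N$ yields $\zeta_k(G_{n,n}) \ge q$. For residues $r\in\{0\}\cup[2k^2+1,N-1]$, a sharper accounting of the optimal isoperimetric sets in the particular residue class pushes the lower bound to $q+1$ in the subcases where the upper bound itself delivers $q+1$; this is what produces the two-value ambiguity in the theorem statement rather than a clean equality.

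\textbf{Main obstacle.} The principal technical difficulty is maintaining the sweep invariant through a barrier shift. Because the cops teleport and the robber moves by one edge simultaneously, a careless shift could in principle let the robber slip across the barrier through a just-vacated diamond; the remedy is that each cop's $k$-visibility provides a $k$-step lookahead, and one must argue geometrically that the lookahead exactly compensates for the unit step of the barrier as long as the cops are spaced correctly. The edges of the barrier, where diamonds meet the grid boundary, require the most delicate handling. A second significant obstacle is the residue-class dichotomy: verifying the exact range $[1, 2k^2]$ where $q$ cops suffice, as opposed to the $2k+1$ residues where $q+1$ may be required, reduces to a fine geometric audit of how partial diamonds at the top-left and bottom-right corners contribute to the tiling, and each of the boundary residues must be checked individually to confirm that $q+1$ cops can always be made to win while $q$ cannot be pushed through unconditionally.
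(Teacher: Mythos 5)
Your proposal has the right general shape (a Lee-sphere/diamond tiling for the upper bound, the isoperimetric bound of Theorem~\ref{thm:isoperimetric} for the lower bound), but the central mechanism of the upper bound is not workable as described, and this is where the real content of the theorem lies. A moving anti-diagonal barrier of $k$-balls cannot be formed by $q=\lfloor n/(2k^2+2k+1)\rfloor$ cops: $q$ disjoint $k$-balls contain at most $q(2k^2+2k+1)\le n$ vertices in total and touch at most $q(2k+1)\approx n/k$ of the $n$ columns, so in any single round almost every column of the grid contains no vertex visible to any cop, and no impermeable slab traversing the grid can exist, even for one round. The paper's strategy is genuinely different and essentially amortized: it fixes the Golomb--Welch perfect Lee-sphere tiling of the rectangular grid $G_{\infty,n}$, linearly orders the $n+2k$ tiles of each period, has the cops clear $h$ consecutive tiles per round while the uncleared (``infected'') staircase region regrows by one step, and wins because over $2k^2+2k+1$ rounds a full period of tiles is cleared while the infected set shifts down by exactly one period (Lemma~\ref{lem:moving_down_is_moving_along}); the required inequality is $h(2k^2+2k+1)>n+2k$. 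The residue threshold $2k^2$ then falls out of the gap between the lower bound $\lceil (n+1)/(2k^2+2k+1)\rceil$ and the upper bound $\lceil (n+2k+1)/(2k^2+2k+1)\rceil$, not from how corner diamonds meet the boundary.

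Two further gaps. First, your lower bound runs in the wrong direction: exhibiting triangular corners and half-strips only gives \emph{upper} bounds on $\Phi_V(G_{n,n},m)$ for those cardinalities $m$; to lower-bound $\Phi_V(G_{n,n})=\max_m\min_{|S|=m}|N(S)|$ you must show that \emph{every} set of some fixed size has boundary at least $n$, which the paper imports from the Bollob\'as--Leader compression theorem ($\Phi_V(G_{n,n})=n$). Relatedly, no ``sharper accounting'' pushes the lower bound to $q+1$ in the residues $\{0\}\cup[2k^2+1,2k^2+2k]$ --- the theorem deliberately leaves those cases two-valued, and the paper lists resolving them as an open problem. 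Second, the passage from the proximity game to actual localization is not ``one or two additional probes'': once a probe returns a finite distance, the candidate set is an annulus of up to $4k$ vertices that continues to move each round, and the paper needs separate multi-round arguments for $k=1$ and $k\ge 2$ that repeatedly narrow the candidates to a diagonal pair and then push the robber all the way to the boundary of the grid before capture; this is where the hypotheses $\prox_1(G_{n,n})\ge 3$ and $\prox_k(G_{n,n})\ge 2$, and hence the stronger requirement on $n$ when $k=1$, are consumed.
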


We do note that the proof of Theorem~\ref{thm:main_grids_results} is lengthy. As a high-level overview, we will split the game into two phases. 

Phase 1 revolves around seeing the robber for the first time. This is represented by playing the $k$-proximity game on the graph. 
In this phase, the locations that the robber could be in without having been seen will be analyzed. 
Let $S$ be this set of vertices at a certain time. 
The cop strategy that we will provide will ensure that the border of $S$ will contain (at most) $n$ vertices, with (at most) one border vertex per column. 
The strategy will also ensure that each cop removes $2k^2+2k+1$ vertices from the set $S$ (by 'seeing' these vertices), with exceptions for when a cop plays near the edges of the grid. 
This will be done in a way that guarantees that more than $n$ vertices of $S$ are `seen' by the cops on the average round, and that when we remove the `seen' vertices, with a resulting set of `unseen' vertices $S'$, the border of $S'$ will contain (at most) $n$ vertices, with (at most) one border vertex per column.
Playing recursively, the cops will eventually see the robber for the first time.

In Phase 2, the cops are able to use the fact that some cop has just seen the robber to capture the robber under the typical $k$-visibility Localization game rules.

For the case that $k=1$, Theorem~\ref{thm:main_grids_results} improves the upper bound of the $1$-proximity number and the $1$-visibility localization number on $n \times n$ Cartesian grid graphs found in previous work \cite{BMMN}.

\subsection{A Tiling of the Infinite Grids by $k$-Balls} \label{ssec:TilingInfiniteGrids}

\begin{figure}[h]
    \centering
    \includegraphics[scale=0.15]{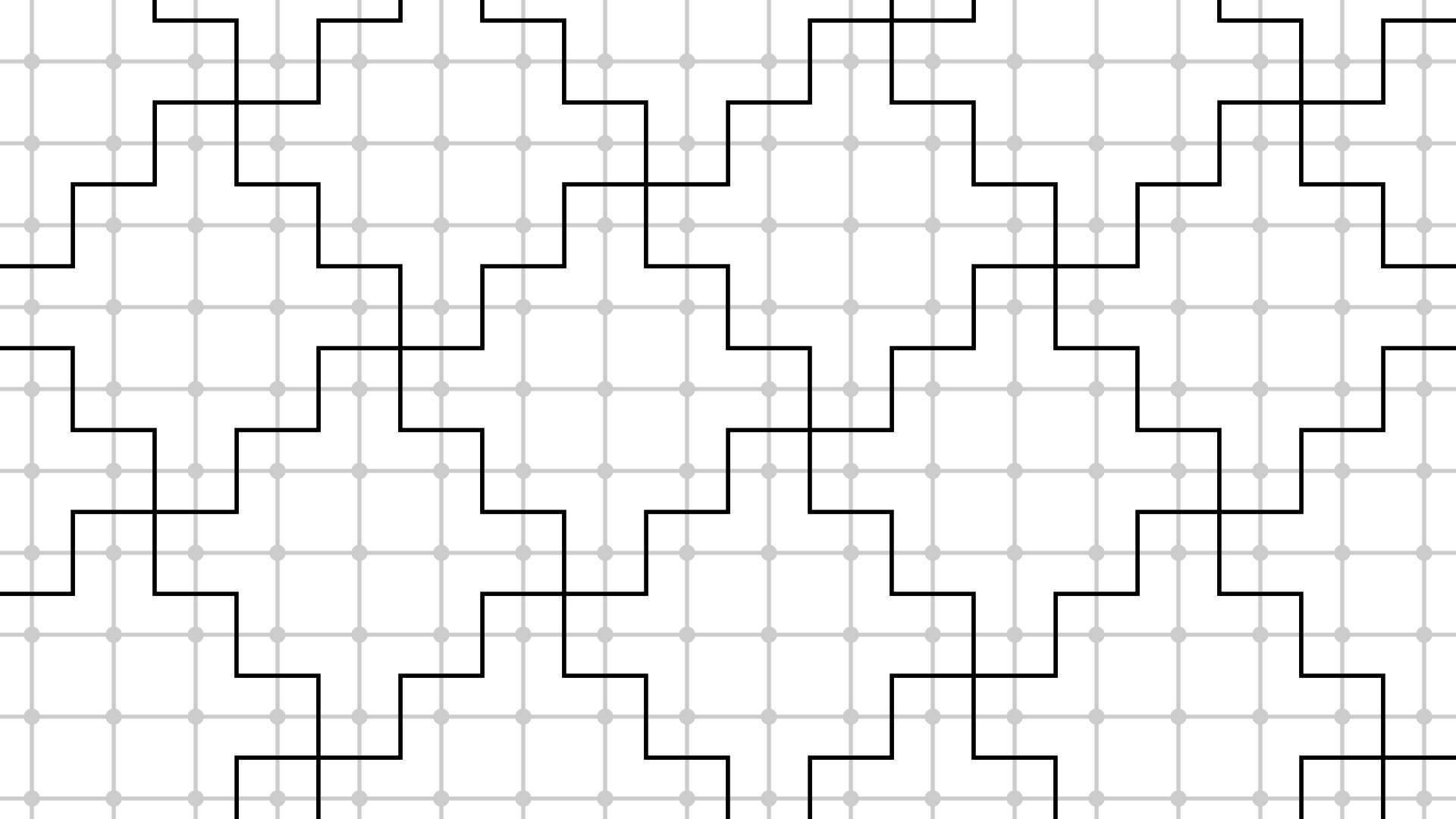}
    \caption{An example of the $2$-tiling of the grid.}
    \label{fig:tiling_of_grid}
\end{figure}

The \emph{square grid graph} $G_\infty$ is the graph on vertex set $\mathbb{Z} \times \mathbb{Z}$ with edges between two vertices $(x,y)$ and $(x',y')$ when $|x-x'|+|y-y'|=1$. 
The \emph{rectangular grid graph} of length $n$, $G_{\infty,n}$, is the induced subgraph of the square grid graph on the vertex set $\mathbb{Z}\times[n]$. The $n \times n$ Cartesian grid graph $G_{n,n}$ that we will be focusing on is the induced subgraph of the square grid graph on the vertex set $[n]\times[n]$. 
Grids are thought of as Cartesian products of paths. Cartesian products are typically displayed with vertex $(x,y)$ being placed $x$ units above and $y$ units to the right of the origin $(0,0)$. 
This is in contrast to the Cartesian coordinate system.

A \emph{vertex-partition} of a graph $G$ is a  set of subsets of $V(G)$, $\{V_1, V_2, \ldots\}$,  such that $\bigcup_{i\geq 1} V_i = V(G)$ and $V_i \cap V_j=\emptyset$ for $i\neq j$. 
Since the graphs we are working on have planar embeddings, the vertex partitioning can be used to create a \emph{tiling} of the plane. We abuse notation by calling the $V_i$ \emph{tiles}, and refer to the set $\{V_i : i \geq 1\}$ as a \emph{tiling} of $G$. 

We initially provide a tiling of the square grid graph and then convert this over to tilings of the rectangular grid graphs. 
Let $G$ be any of these grid graphs or the $n \times n$ Cartesian grid graph. 
Define the $k$-tile about $(x,y) \in \mathbb{Z} \times \mathbb{Z}$ as $T_{x,y} = T^k_{x,y} = \{(x',y') \in V(G): |x-x'|+|y-y'|\leq k\}$. 
Note that on the rectangular grid graphs and the $n \times n$ Cartesian grid graphs, $(x,y)$ does not have to be a vertex of the grid: The $3$-tile $T^3_{0,0}$ is the subset of vertices $\{(1,1),(2,1),(1,2)\}$ of $G_{n\times n}$ when $n \geq 2$. 
A vertex-decomposition into $k$-tiles of one of our grids will be called a \emph{tiling} of that grid. 
We note that such a tiling on $G_\infty$ is equivalent to a perfect $k$-error-correcting Lee code on $\mathbb{Z}^2$, and the $k$-tiles are sometimes called Lee spheres~\cite{GolombWelch}.

\begin{definition}
Let \[S = \{(ik+j(k+1),i(k+1)-jk): i,j \in \mathbb{Z}\}\] and let 
\[S_{i',j'} = \{(ik+j(k+1), i(k+1)-jk): i\geq i' \text{ and } j \geq j' \text{, or } j>j' \text{ and } i\in \mathbb{Z}\}.\]
\end{definition}
See Figure \ref{fig:half_tiling_of_grid} for a visual representation of the $2$-tiling that results from $S_{0,0}$ with $k=2$ by placing $2$-tiles centered at each vertex in $S_{0,0}$.  

\begin{figure}[h]
    \centering
    \includegraphics[scale=0.15]{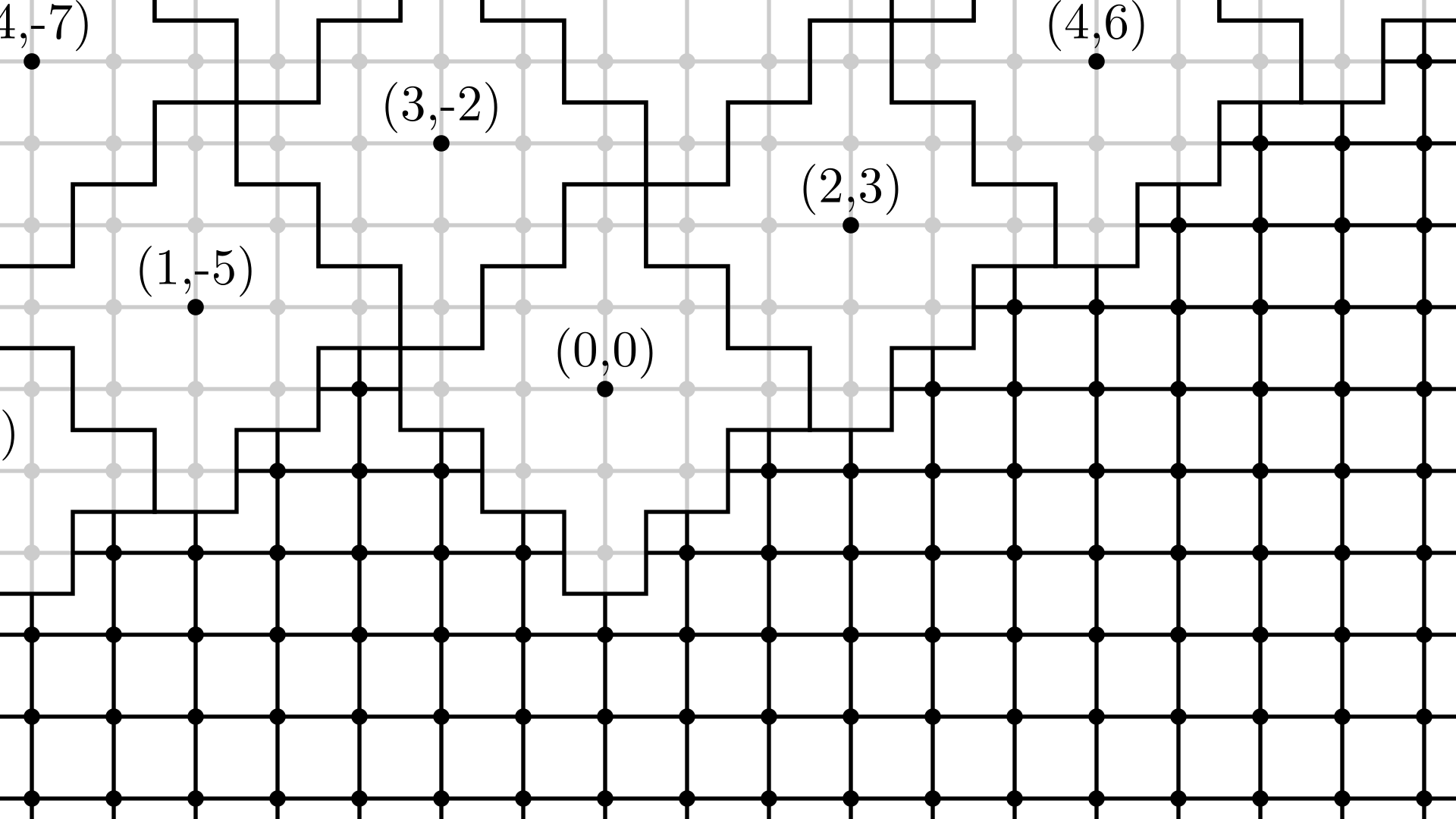}
    \caption{A half $2$-tiling of $G_\infty$ around $(0,0)$, defined using the $2$-tiles around each point in $S_{0,0}$.}
    \label{fig:half_tiling_of_grid}
\end{figure}

When playing the $k$-proximity game throughout the rest of the paper, we can think of the vertices that may contain an unseen robber as an infection, which spreads to the neighboring vertices after each cop move. 
With this idea, when a cop is placed on some vertex, the infection is cleaned for all vertices of distance at most $k$ from that cop. 

Then in this subsection and the next, we will show the following. 
\begin{enumerate}
    \item If an infection was on the vertices of distance at most $k$ from some vertex in $S_{i',j'}$ and the infection spread to neighboring vertices, then the newly infected vertices are exactly below a vertex that was infected; 
    \item The resulting infected vertices can also be decomposed into a set of disjoint tiles, which can be represented as a downward shift of the tiles around the points in $S_{i'',j''}$ for certain $i'',j''$; and
    \item If the infection in certain balls of radius $k$ are removed from the set of infected vertices, then the remaining infection can also be decomposed into a set of disjoint tiles, which can also be represented as a downward shift of the tiles around the points in $S_{i'',j''}$ for certain $i'',j''$.
\end{enumerate}

For vertex $u$, define $N[u]$ to be the vertices of distance at most $1$ from $u$, and $N[U] = \bigcup_{u \in U} N[u]$. The \emph{border} of the set $U$ is 
$N(U) = N[U] \setminus U$. 
Define $D^i[U]= \{(x-i,y) : (x,y) \in U\}$ for any set of vertices $U$ in a square or rectangular grid graphs, where $i \in \mathbb{Z}$. Let $D[U]=D^1[U]$.  
The \emph{downward-border} of the set $U$ is 
$D(U) = D[U] \setminus U$.

For each $(x,y)$, we decompose the (potential) vertices of distance $k+1$ from $(x,y)$ as four sets. 
Let $f_{ne}(x,y) = \{(x+i,y+k+1-i): 0 \leq i \leq k\}$, 
$f_{nw}(x,y) = \{(x+k+1-i,y-i): 0 \leq i \leq k\}$, 
$f_{sw}(x,y) = \{(x-i,y-k-1+i): 0 \leq i \leq k\}$, and 
$f_{se}(x,y) = \{(x-k-1+i,y+i): 0 \leq i \leq k\}$. 
See Figure~\ref{fig:BorderTxy} for an example of these sets in relation to $T_{x,y}$ when $k=2$. 
These four sets are pairwise disjoint, and it is straightforward to find that $N(T_{x,y}) = f_{ne}(x,y) \cup f_{se}(x,y) \cup f_{sw}(x,y) \cup f_{nw}(x,y)$, and that each vertex in these sets has distance $k+1$ from $(x,y)$ in $G_\infty$.  

We also note that the vertices in $f_{ne}(x,y)$ each have distance $k$ to the vertex $(x+k,y+k+1)\in S$, and similarly the vertices of $f_{se}(x,y)$ each have distance $k$ to the vertex $(x-k-1,y+k)\in S$, 
the vertices of $f_{sw}(x,y)$ each have distance $k$ to the vertex $(x-k,y-k-1)\in S$, 
and the vertices of $f_{nw}(x,y)$ each have distance $k$ to the vertex $(x+k+1,y-k)\in S$.

\begin{figure}[h]
    \centering
    \includegraphics[scale=0.2]{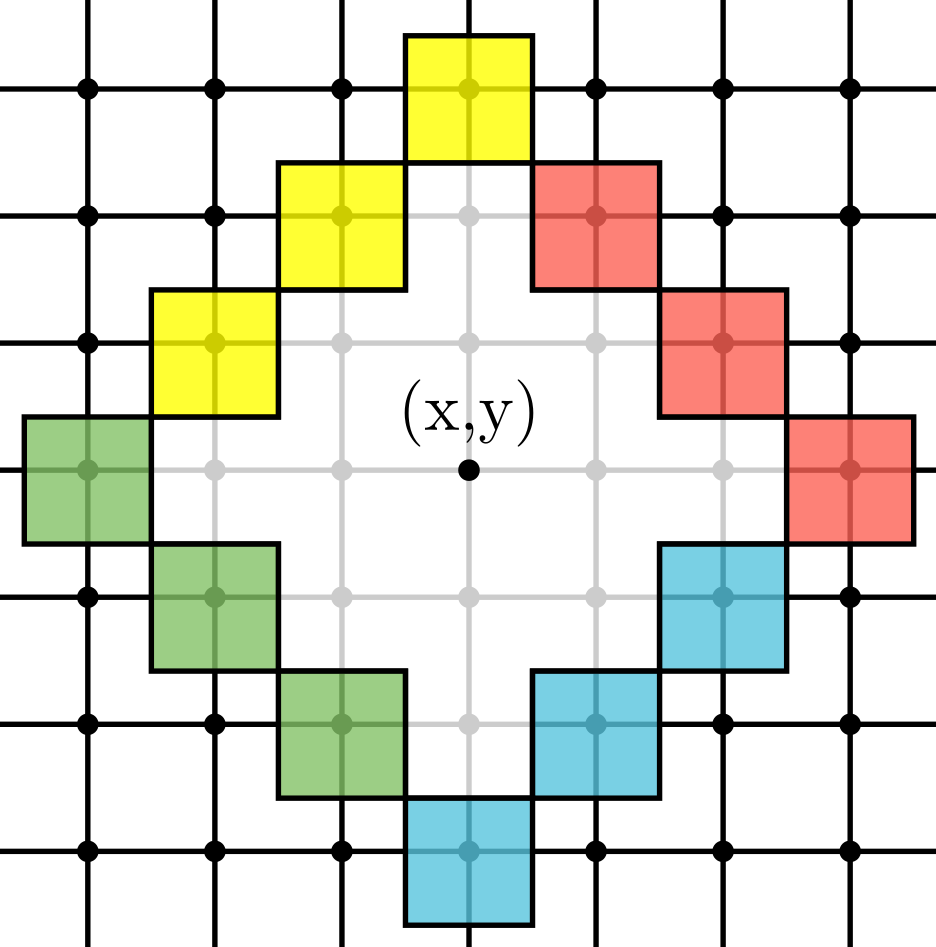}
    \caption{A $2$-tile $T_{x,y}$ along with the four subsets of vertices $f_{ne}(x,y)$ (in red), $f_{se}(x,y)$ (in blue), $f_{sw}(x,y)$ (in green), and $f_{nw}(x,y)$ (in yellow).}
    \label{fig:BorderTxy}
\end{figure}

\begin{theorem}[\cite{GolombWelch}] \label{thm:kTiling}
The set of $k$-tiles $\{T_{x,y} : (x,y) \in S\}$ form a tiling of $G_\infty$. 
\end{theorem}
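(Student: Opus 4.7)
The plan is to prove the tiling property by identifying $S$ as a sublattice of $\mathbb{Z}^2$ of index $2k^2+2k+1$ and showing that $T_{0,0}$ is a complete set of coset representatives for $\mathbb{Z}^2/S$. Once both are established, for any $(a,b) \in \mathbb{Z}^2$ there is a unique $s\in S$ such that $(a,b)-s \in T_{0,0}$, so $(a,b)$ belongs to exactly one translate $T_s = s + T_{0,0}$, yielding the tiling.

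To carry this out, I would first verify the two numerical ingredients. The set $S$ is the image of $\mathbb{Z}^2$ under the map $(i,j)\mapsto(ik+j(k+1),\,i(k+1)-jk)$, i.e., the sublattice spanned by $(k,k+1)$ and $(k+1,-k)$; its generating matrix has determinant of absolute value $k^2+(k+1)^2 = 2k^2+2k+1$, so $[\mathbb{Z}^2:S]=2k^2+2k+1$. Independently, the standard count of lattice points in an $L^1$-ball of radius $k$ gives $|T_{0,0}| = 1 + \sum_{i=1}^{k} 4i = 2k^2+2k+1$. Since these two numbers match, it suffices to show that the elements of $T_{0,0}$ are pairwise non-congruent modulo $S$, equivalently that $(T_{0,0}-T_{0,0})\cap S=\{0\}$.

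Since any element of $T_{0,0} - T_{0,0}$ has $L^1$-norm at most $2k$, the remaining task, which I view as the main obstacle, is to show that every nonzero $s\in S$ satisfies $\|s\|_1\geq 2k+1$. For $s=(ik+j(k+1),\,i(k+1)-jk)$ with $(i,j)\neq(0,0)$, a direct expansion gives
\[
\|s\|_2^2 = (ik+j(k+1))^2+(i(k+1)-jk)^2 = (i^2+j^2)(2k^2+2k+1).
\]
When $i^2+j^2\geq 2$, the bound $\|s\|_1\geq \|s\|_2\geq \sqrt{2(2k^2+2k+1)} = \sqrt{4k^2+4k+2} > 2k+1$ suffices. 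The remaining case $i^2+j^2=1$ consists of the four unit vectors $(\pm 1,0)$ and $(0,\pm 1)$ in $(i,j)$-coordinates, for which a direct check gives $\|s\|_1 = k+(k+1) = 2k+1$. This establishes the packing inequality; combined with the index-counting argument above, $T_{0,0}$ is a complete set of coset representatives, and the tiling property follows. I would also note that equality $\|s\|_1 = 2k+1$ in exactly these four cases foreshadows the adjacency structure between neighboring tiles that the subsequent sections exploit.
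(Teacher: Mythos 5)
Your proof is correct and complete. Note, however, that the paper does not prove this statement at all: it is quoted directly from the cited reference of Golomb and Welch on perfect codes in the Lee metric, so there is no in-paper argument to compare against. Your route --- identifying $S$ as the sublattice generated by $(k,k+1)$ and $(k+1,-k)$, computing its index $k^2+(k+1)^2=2k^2+2k+1$, matching this against $|T_{0,0}|=1+4\binom{k+1}{2}\cdot\frac{2}{k+1}\cdot\frac{k+1}{2}=2k^2+2k+1$, and then reducing the tiling claim to the packing inequality $\|s\|_1\ge 2k+1$ for all nonzero $s\in S$ --- is essentially the standard proof of the existence of perfect $2$-dimensional Lee codes, and all the computations check out: the identity $\|s\|_2^2=(i^2+j^2)(2k^2+2k+1)$ together with $\|s\|_1\ge\|s\|_2$ handles $i^2+j^2\ge 2$ since $(2k+1)^2=4k^2+4k+1<2(2k^2+2k+1)$, and the four unit-vector cases give $\|s\|_1=2k+1$ exactly. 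Your closing remark that equality occurs precisely for the four generators is also consistent with the adjacency structure the paper later exploits via the sets $f_{ne},f_{se},f_{sw},f_{nw}$, each of whose vertices lies at distance $k$ from one of the four neighboring tile centers.
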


It will be useful to prove the following technical lemmas. 

\begin{lemma} \label{lem:U_D_is_N}
For some $(i',j') \in S$, let $U_{i',j'}=\bigcup_{(x,y)\in S_{i',j'}} T_{x,y}$. We then have that \[D(U_{i',j'}) = N(U_{i',j'}).\]
\end{lemma}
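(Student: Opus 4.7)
The plan is to prove both inclusions separately. The inclusion $D(U_{i',j'}) \subseteq N(U_{i',j'})$ is immediate from definitions: if $v \in D(U_{i',j'})$, then $v \notin U_{i',j'}$ while $(v_1+1, v_2) \in U_{i',j'}$, and since $v$ is adjacent to $(v_1+1, v_2)$, we conclude $v \in N(U_{i',j'})$. All the work lies in the reverse containment.

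My strategy for $N(U_{i',j'}) \subseteq D(U_{i',j'})$ is first to describe $N(U_{i',j'})$ explicitly. Using the border decomposition $N(T_{x,y}) = f_{ne}(x,y) \cup f_{nw}(x,y) \cup f_{sw}(x,y) \cup f_{se}(x,y)$ noted before the lemma, together with the observation (also given there) that the four pieces $f_{ne}, f_{nw}, f_{sw}, f_{se}$ of the tile indexed by $(i,j)$ sit entirely inside the tiles indexed by $(i+1,j), (i,j+1), (i-1,j), (i,j-1)$ respectively, a piece contributes to $N(U_{i',j'})$ exactly when the corresponding neighboring index is absent from $S_{i',j'}$. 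A short case analysis of the defining condition of $S_{i',j'}$ shows that for any $(i,j) \in S_{i',j'}$, the neighbors $(i+1, j)$ and $(i, j+1)$ always remain in $S_{i',j'}$, while $(i-1, j) \notin S_{i',j'}$ forces $(i,j) = (i',j')$, and $(i, j-1) \notin S_{i',j'}$ forces either $j = j'$ with $i \geq i'$, or $j = j'+1$ with $i < i'$. This yields
\[
N(U_{i',j'}) \;=\; f_{sw}(T_{i',j'}) \;\cup\; \bigcup_{i \geq i'} f_{se}(T_{i,j'}) \;\cup\; \bigcup_{i < i'} f_{se}(T_{i,j'+1}).
\]

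With $N(U_{i',j'})$ in hand, it remains to check that each such border vertex $v$ satisfies $(v_1+1, v_2) \in U_{i',j'}$. For any $v = (x - k - 1 + i'', y + i'') \in f_{se}(T_{i,j})$ with $(x,y)$ the center of the tile indexed by $(i,j) \in S_{i',j'}$ and $0 \leq i'' \leq k$, a direct computation gives $d((v_1+1, v_2), (x,y)) = (k - i'') + i'' = k$, so $(v_1+1, v_2) \in T_{x,y} \subseteq U_{i',j'}$. For $v = (x - i'', y - k - 1 + i'') \in f_{sw}(T_{i',j'})$ with $1 \leq i'' \leq k$, a similar computation gives $d((v_1+1, v_2), (x,y)) = (i''-1) + (k+1 - i'') = k$, so again $(v_1+1, v_2) \in T_{i',j'}$.

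The main obstacle is the single exceptional vertex $v = (x, y-k-1)$ (the $i''=0$ case of $f_{sw}(T_{i',j'})$), which lies at distance $k+2$ from the center $(x, y)$ of its own tile $(i', j')$ and is therefore \emph{not} directly below a vertex of $T_{i',j'}$. Here I would verify that $(v_1+1, v_2) = (x+1, y-k-1)$ is at $\ell_1$-distance exactly $k$ from $(x+1, y-2k-1)$, which is the center of the tile indexed by $(i'-1, j'+1)$, and that this tile belongs to $S_{i', j'}$ since $j'+1 > j'$. This non-local check is precisely what forces $S_{i',j'}$ to include the full diagonal $j = j'+1$ rather than only the ray $i \geq i'$; once it is in hand, every vertex of $N(U_{i',j'})$ is confirmed to lie in $D(U_{i',j'})$, completing the proof.
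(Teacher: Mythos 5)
Your proof is correct and follows essentially the same route as the paper's: decompose each tile's border into the four pieces $f_{ne},f_{nw},f_{sw},f_{se}$, determine which pieces survive into $N(U_{i',j'})$ by checking which neighboring indices lie in $S_{i',j'}$, verify each surviving $f_{se}$ (and most of $f_{sw}$) sits directly below its own tile, and handle the single exceptional vertex $(x,y-k-1)$ of the corner tile by locating it below the tile indexed $(i'-1,j'+1)$. Your version is marginally more explicit (you give an exact description of the border and also record the trivial inclusion $D\subseteq N$), but the key steps coincide.
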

\begin{proof}
    It follows from Theorem~\ref{thm:kTiling} that each vertex in $N(U_{i',j'})$ has distance $k+1$ from some vertex $(x,y)\in S_{i',j'}$ and has distance $k$ to some vertex $(x',y') \in S \setminus S_{i',j'}$. 

    For each vertex $(x,y)\in S_{i'+1,j'} \subseteq S_{i',j'}$, the three vertices $\{(x+k,y+k+1), (x-k,y-k-1), (x+k+1,y-k)\}$ are each in $S_{i',j'}$, and so $N(T_{x,y}) \cap N(U_{i',j'}) \subseteq f_{se}(x,y)$. Noting that $f_{se}(x,y) \subseteq D(T_{x,y})$, we have $N(T_{x,y}) \cap N(U_{i',j'}) \subseteq D(T_{x,y})$. 
    Thus, if there is a vertex in $N(U_{i',j'})$ but not in $D(U_{i',j'})$, then it must have distance $k+1$ from some vertex in $S_{i',j'}$ but cannot have distance $k+1$ from a vertex in $S_{i'+1,j'}$.

    There is a single vertex in $S_{i',j'} \setminus S_{i'+1,j'}$, namely $(x,y)=(i'k+j'(k+1),i'(k+1)-j'k)$. 
    The two vertices $\{(x+k,y+k+1), (x+k+1,y-k)\}$ are each in $S_{i',j'}$ and the vertices $\{(x-k-1,y+k), (x-k, y-k-1)\}$ are not, and so $N(T_{x,y}) \cap N(U_{i',j'}) \subseteq f_{se}(x,y) \cup f_{sw}(x,y)$. We have that $(f_{se}(x,y) \cup f_{sw}(x,y)) \setminus (x,y-k-1) \subseteq D(T_{x,y})$ and $(x,y-k-1)\subseteq D(T_{x-2k-1,y+1})$. 

    We have thus shown that each vertex in $N(U_{i',j'})$ is also in $D(T_{x,y})$ for some $(x,y)\in S_{i',j'}$, and so $N(U_{i',j'}) \subseteq \bigcup_{(x,y) \in S_{i',j'}} D(T_{x,y})$. 
    Since the tiles $T_{x,y}$ over all $(x,y)\in S_{i',j'}$ are a tiling of $U_{i',j'}$, $D(U_{i',j'})= \left(\bigcup_{(x,y)\in S_{i',j'}} D(T_{x,y}) \right) \setminus U_{i',j'}$, and so $N(U_{i',j'}) \subseteq D(U_{i',j'})$, as we needed to show.         
\end{proof}

\subsection{Tilings of the Rectangular Grid Graphs} \label{ssec:TilingRectangularGrids}

\begin{figure}
    \centering
    \includegraphics[scale=0.15]{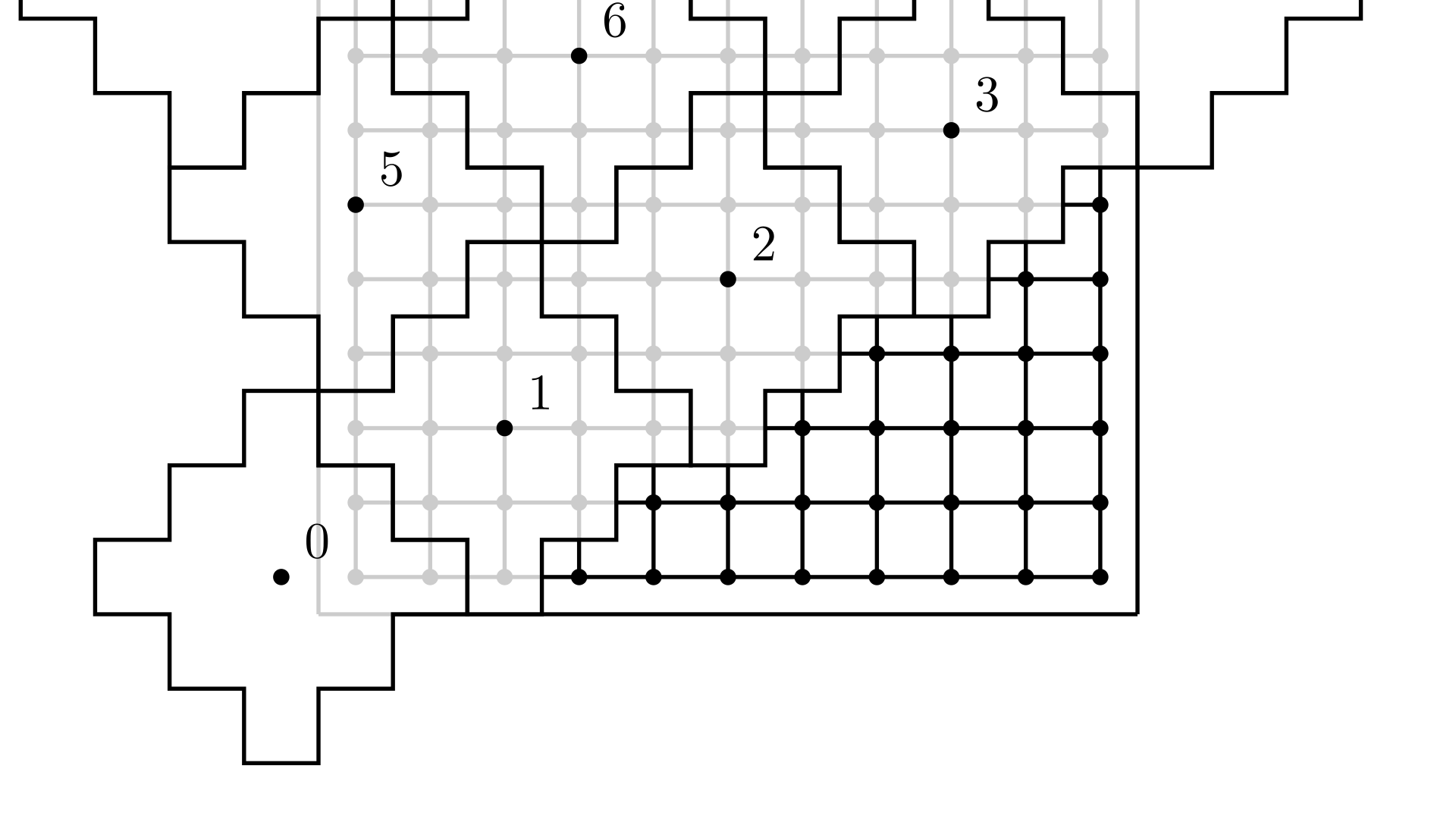}
    \caption{An example of the $2$-tiles $T_i$ for $i \geq 0$ in $G_{\infty,11}$.}
    \label{fig:rectangle_labels}
\end{figure}

We will use the tiling of the square grid graph to find a tiling of the rectangular grid graphs; see Figure \ref{fig:rectangle_labels}. 
The corresponding tiles that form a tiling for $G_{\infty,n}$ are obtained by taking the set of tiles $\{T_{x,y} \cap V(G_{\infty,n}) : (x,y)\in S\}$. 
Note in particular that $T_{x,y} \cap V(G_{\infty,n}) \neq \emptyset$ if and only if $y \in [1-k,n+k]$, and so we may define the set $S' = \{(x,y)\in S: y\in [1-k,n+k]\}$ so that the set of tiles $\{T_{x,y} \cap V(G_{\infty,n}) : (x,y)\in S'\}$ is the aforementioned tiling of $G_{\infty,n}$. 
We assign a single integer label $i$ to each of these tiles in the tiling of $G_{\infty,n}$, so that $T_i=T_{x,y}$ for a particular $(x,y) \in S$,  with the following scheme:
\begin{enumerate}
    \item $T_0=T_{0,0}\cap V(G_{\infty,n})$; 
    \item If $(x,y),(x+k,y+k+1) \in S'$, then $T_i=T_{x,y}$ if and only if $T_{i+1}=T_{x+k,y+k+1}$; 
    \item Suppose that $(x,y)\in S'$ and $(x+k+1,y+k)\notin S'$, and that $T_i = T_{x,y}\cap V(G_{\infty,n})$. With $\alpha=\lfloor \frac{x-1}{k+1} \rfloor$, note that $(x+k+1-\alpha k,y-k-\alpha(k+1))\in S'$. 
    Define $T_{i+1} = T_{x+k+1-\alpha k, y-k-\alpha(k+1)}\cap V(G_{\infty,n})$.
\end{enumerate}
We note that $T_i$ is defined for each $i$ an integer. 
Define $S'_j = \{(x,y) \in S' : T_{x,y}=T_i \text{ and } i\geq j\}$. Figure \ref{fig:rectangle_labels} is a representation of the tiles around $S'_0$.  
Note that $S'_{j+1} \subseteq S'_{j}$.

Define $U_p = \bigcup_{i \geq p} T_i$. For the remainder of this subsection, we will give some technical lemmas that show how the neighbors of a set interact with the function $D$, which can be thought of as a function that ``pushes'' a set of vertices down. This will also include how these two functions interact with the infinite set $U_p$. To begin, we may show that the order of these two operations does not matter. 

\begin{lemma} \label{lem:Swap_U_N}
For any subset of vertices $U \subseteq V(G_{\infty,n})$, it holds that $N[D[U]] = D[N[U]]$. 
\end{lemma}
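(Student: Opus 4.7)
The plan is to observe that $D$ is a graph automorphism of $G_{\infty,n}$, after which the identity $N[D[U]] = D[N[U]]$ becomes an instance of the standard fact that automorphisms commute with the closed-neighborhood operator. Concretely, the map $D \colon (x,y) \mapsto (x-1,y)$ is a bijection on $V(G_{\infty,n}) = \mathbb{Z}\times[n]$ with inverse $(x,y) \mapsto (x+1,y)$. Since two vertices $(x_1,y_1)$ and $(x_2,y_2)$ are adjacent in $G_{\infty,n}$ exactly when $|x_1-x_2|+|y_1-y_2|=1$, and this quantity is invariant under a uniform shift of first coordinates, $D$ preserves adjacency in both directions.

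Next, I would deduce that $D[N[v]] = N[D(v)]$ for every single vertex $v \in V(G_{\infty,n})$. This is just the statement that a graph automorphism takes the closed neighborhood of $v$ to the closed neighborhood of its image. Taking unions over $v \in U$ then gives
\[
N[D[U]] = \bigcup_{v \in U} N[D(v)] = \bigcup_{v \in U} D[N[v]] = D[N[U]],
\]
which is precisely the desired equality.

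I expect no serious obstacle here; the only point that needs care is checking that $D$ really maps $V(G_{\infty,n})$ onto itself. This works because the shift acts on the unbounded $\mathbb{Z}$-coordinate rather than on the bounded $[n]$-coordinate. This is also the reason the analogous statement is \emph{not} available on $G_{n,n}$, where translating in the $x$-direction pushes vertices off the grid, and it is precisely why the authors are setting up their tiling analysis in the rectangular grid $G_{\infty,n}$ first.
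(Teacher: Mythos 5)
Your proof is correct and is essentially the same argument as the paper's: the paper verifies directly in coordinates that the unit shift $(x,y)\mapsto(x-1,y)$ commutes with taking a neighbor (the displacement $(\varepsilon_1,\varepsilon_2)$ with $|\varepsilon_1|+|\varepsilon_2|=1$ is unaffected by shifting the first coordinate), which is precisely your observation that $D$ is an automorphism of $G_{\infty,n}$. Your packaging via ``automorphisms commute with closed neighborhoods'' is a slightly cleaner way of organizing the same computation, and your closing remark about why the analogous statement fails on $G_{n,n}$ correctly identifies the role of the unbounded coordinate.
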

\begin{proof}
If $(v_1,v_2) \in N[D[U]],$ then there is a $(w_1,w_2) \in D[U]$ with $(v_1,v_2) \in N((w_1,w_2))$. 
Since $(v_1,v_2) \in N((w_1,w_2))$, defining $\varepsilon_1=v_1-w_1$ and $\varepsilon_2 = v_2-w_2$, it follows that $|\varepsilon_1|+|\varepsilon_2| =1$.

Since $(w_1,w_2) \in D[U]$, we have $(w_1+1,w_2)\in U$. It follows that $(w_1+\varepsilon_1+1,w_2+\varepsilon_2) \in N[U]$. 
However, we have that $(w_1+\varepsilon_1,w_2+\varepsilon_2)=(v_1,v_2) \in D[N[U]]$. 
Therefore, we have that $N[D[U]] \subseteq D[N[U]]$. 

The opposite direction is done similarly. 
\end{proof}

Lemma~\ref{lem:U_D_is_N} also applies to the rectangular grid graphs, and so by repeated applications of Lemma~\ref{lem:Swap_U_N}, the following result holds. 
\begin{corollary} $N[D^q[U_p]] = D[D^q[U_p]] = D^{q+1}[U_p]$. \label{cor:N_is_D}
\end{corollary}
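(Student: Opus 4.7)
The second equality $D[D^q[U_p]] = D^{q+1}[U_p]$ is immediate from the definition of the iterated operator $D^{q+1}$. For the first equality, my plan is to reduce to the base case $q=0$ by iterating Lemma~\ref{lem:Swap_U_N}, which says that $N$ and $D$ commute on arbitrary vertex sets. Applying this $q$ times yields $N[D^q[U_p]] = D^q[N[U_p]]$, so it suffices to establish $N[U_p] = D[U_p]$; substituting then gives $N[D^q[U_p]] = D^q[D[U_p]] = D^{q+1}[U_p]$, as required.

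The base case is where the real work lies. Lemma~\ref{lem:U_D_is_N} is stated for $G_\infty$, but its argument transfers to the rectangular setting: the case analysis only uses the local border decomposition of each $k$-tile $T_{x,y}$ into the sets $f_{ne}, f_{nw}, f_{se}, f_{sw}$, and intersecting each tile with $V(G_{\infty,n})$ merely deletes the portions of these sets whose second coordinate lies outside $[1,n]$, which preserves the containment $N(U_p) \subseteq D(U_p)$ that Lemma~\ref{lem:U_D_is_N} establishes. This yields the border identity $N(U_p) = D(U_p)$. Granted the inclusion $U_p \subseteq D[U_p]$, I can then conclude $N[U_p] = U_p \cup N(U_p) = U_p \cup D(U_p) = D[U_p]$, closing the base case and hence the corollary.

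The main obstacle I anticipate is verifying the inclusion $U_p \subseteq D[U_p]$, i.e., that $U_p$ is closed under unit upward shifts in the first coordinate. Intuitively this should hold because the indexing scheme arranges $\{T_i\}_{i \in \mathbb{Z}}$ so that tiles of larger index sit higher in the grid, making $U_p$ an upward-unbounded region. However, the third clause in the recursive definition of $T_{i+1}$ from $T_i$, which wraps the index around whenever a tile would fall off the top of the strip, must be handled with care: a short induction on $i$ is needed to confirm that every unit upward shift of a vertex in a tile of index at least $p$ lands in a tile of index at least $p$, including at the wrap-around junctions. I expect this bookkeeping, not the commutation argument, to consume most of the proof.
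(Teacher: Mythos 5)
Your argument matches the paper's: the paper proves this corollary in a single sentence by observing that Lemma~\ref{lem:U_D_is_N} transfers to the rectangular grid graphs and then applying Lemma~\ref{lem:Swap_U_N} repeatedly, which is exactly your reduction. The extra care you take over the inclusion $U_p \subseteq D[U_p]$ (needed to pass from the border identity $N(U_p) = D(U_p)$ to $N[U_p] = D[U_p]$) is a detail the paper leaves implicit rather than a different route, and your plan for closing it is sound.
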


If we take two sets and `push' both of them down by $q$ steps, then basic set operations will be respected by the push-down operator $D$, with set-minus being one of those.  
\begin{lemma} \label{lem:down_respects_setminus}
For any subsets of vertices $U,V \in V(G_{\infty,n})$, it holds that $D^q[U] \setminus D^q[V] = D^q[U \setminus V]$. 
\end{lemma}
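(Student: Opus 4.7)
The plan is to observe that the push-down operator $D$ is a bijection on $V(G_{\infty,n}) = \mathbb{Z} \times [n]$, since the first coordinate ranges over all of $\mathbb{Z}$, so the map $(x,y) \mapsto (x-1,y)$ sends $V(G_{\infty,n})$ to itself injectively and surjectively. Consequently $D^q$ is also a bijection on $V(G_{\infty,n})$, acting as $(x,y) \mapsto (x-q,y)$. The claim then reduces to the standard set-theoretic fact that bijective images commute with set difference, so the proof is purely formal.

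To make this explicit I would verify both inclusions by a direct element chase. For the forward inclusion, suppose $(a,b) \in D^q[U] \setminus D^q[V]$; then there is $(x,y) \in U$ with $(a,b) = (x-q,y)$. If we had $(x,y) \in V$, then $(a,b) \in D^q[V]$, contradicting the assumption, so $(x,y) \in U \setminus V$ and thus $(a,b) \in D^q[U\setminus V]$. For the reverse inclusion, take $(a,b) \in D^q[U\setminus V]$, so $(a,b) = (x-q,y)$ for some $(x,y) \in U \setminus V$; immediately $(a,b) \in D^q[U]$, and if $(a,b)$ also lay in $D^q[V]$, there would be $(x',y') \in V$ with $(x'-q,y') = (x-q,y)$, forcing $(x',y') = (x,y) \in V$, a contradiction.

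There is no real obstacle here: the result is a formal consequence of injectivity of $D^q$ on the (horizontally unbounded) rectangular grid, and the only thing to be careful about is that $D^q$ does not leave the vertex set, which holds precisely because we are working in $G_{\infty,n}$ rather than a finite Cartesian grid. The reason this lemma is worth recording is that, combined with Corollary~\ref{cor:N_is_D}, it lets one freely interchange the push-down operator with unions, intersections, and set differences in the subsequent analysis of how infected regions evolve under the combined operations of spread and clearance.
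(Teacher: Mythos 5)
Your proof is correct and follows essentially the same route as the paper: a direct element chase exploiting that $D^q$ is just the injective shift $(x,y)\mapsto(x-q,y)$, so membership in $D^q[W]$ is equivalent to membership of $(x+q,y)$ in $W$. The paper's version is simply a terser rendering of your forward inclusion with the reverse noted as symmetric.
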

\begin{proof}
If $(u_1,u_2) \in D^q[U] \setminus D^q[V]$, then 
$(u_1+q,u_2) \in U$ and  $(u_1+q,u_2) \notin V$. It follows that $(u_1+q,u_2) \in U \setminus V$, and so $(u_1,u_2) \in D^q[U \setminus V]$, yielding $D^q[U] \setminus D^q[V] \subseteq D^q[U \setminus V]$. The other direction is done similarly. 
\end{proof}

We then have the following from Lemma~\ref{lem:down_respects_setminus}.

\begin{corollary} $D^q[U_p] \setminus D^q[T_p] = D^q[U_{p+1}]$. \label{cor:Setminus-tile}
\end{corollary}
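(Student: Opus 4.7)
The plan is to derive the corollary in two short steps, treating it as a routine consequence of Lemma~\ref{lem:down_respects_setminus} together with the tiling property established in Theorem~\ref{thm:kTiling} and Section~\ref{ssec:TilingRectangularGrids}.

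First, I would reduce the claim to a statement about the sets before applying $D^q$. By Lemma~\ref{lem:down_respects_setminus}, applied with $U = U_p$ and $V = T_p$, we have
\[
D^q[U_p] \setminus D^q[T_p] \;=\; D^q\bigl[U_p \setminus T_p\bigr],
\]
so it suffices to show $U_p \setminus T_p = U_{p+1}$.

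Second, I would verify this set identity. By the definition $U_p = \bigcup_{i \ge p} T_i$, we immediately get $U_p = T_p \cup U_{p+1}$, so $U_p \setminus T_p = U_{p+1} \setminus T_p$. The labelling scheme in Section~\ref{ssec:TilingRectangularGrids} assigns distinct integer indices to the tiles of the tiling of $G_{\infty,n}$ inherited from Theorem~\ref{thm:kTiling}, and those tiles are pairwise disjoint (a tiling is a vertex-partition). Hence $T_p \cap T_i = \emptyset$ for every $i \ge p+1$, which gives $T_p \cap U_{p+1} = \emptyset$ and therefore $U_{p+1} \setminus T_p = U_{p+1}$. Combining, $U_p \setminus T_p = U_{p+1}$, and substituting into the first display completes the proof.

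There is essentially no obstacle here: the only substantive input beyond Lemma~\ref{lem:down_respects_setminus} is the pairwise-disjointness of the tiles $T_i$, which is already baked into the construction of the tiling. The corollary is intended purely as a convenient rewriting for later induction-style arguments in the grid proof, so the proof should be one short paragraph.
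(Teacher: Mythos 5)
Your proof is correct and follows the same route the paper intends: the paper derives this corollary directly from Lemma~\ref{lem:down_respects_setminus}, leaving the reduction to $U_p \setminus T_p = U_{p+1}$ and the pairwise disjointness of the tiles implicit, which you have simply spelled out.
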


\begin{corollary} \label{cor:subsets_have_larger_index}
$D^q[U_{p_1}] \subseteq D^q[U_{p_2}]$ when $p_1 > p_2$.
\end{corollary}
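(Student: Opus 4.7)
The plan is to give a quick proof by reducing to the monotonicity of the downward-shift operator $D$ combined with the nested structure of the sets $U_p$. Since $U_p = \bigcup_{i \geq p} T_i$, whenever $p_1 > p_2$ the index set $\{i : i \geq p_1\}$ is a subset of $\{i : i \geq p_2\}$, so directly from the definition we have $U_{p_1} \subseteq U_{p_2}$.

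Next I would record (or recall from Lemma~\ref{lem:down_respects_setminus}) the trivial monotonicity property of $D^q$: if $U \subseteq V$, then for every $(u_1, u_2) \in D^q[U]$ we have $(u_1+q, u_2) \in U \subseteq V$, hence $(u_1, u_2) \in D^q[V]$. Applying this with $U = U_{p_1}$ and $V = U_{p_2}$ yields $D^q[U_{p_1}] \subseteq D^q[U_{p_2}]$, as claimed.

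Alternatively, one could chain Corollary~\ref{cor:Setminus-tile} inductively: for each $p$, that corollary gives $D^q[U_{p+1}] = D^q[U_p] \setminus D^q[T_p] \subseteq D^q[U_p]$, and iterating $p_1 - p_2$ times from $p = p_2$ up to $p = p_1 - 1$ yields the inclusion. There is no real obstacle here; the statement is essentially a bookkeeping consequence of $D^q$ being defined pointwise and $U_p$ being a monotonically shrinking family, and the only thing to be a little careful about is ensuring that $D^q$ is indeed applied consistently on both sides, which is immediate from the pointwise definition $D^q[U] = \{(x-q, y) : (x, y) \in U\}$.
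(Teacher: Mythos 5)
Your proposal is correct and matches the paper's (implicit) justification: the paper states this corollary without proof as an immediate consequence of the nesting $U_{p_1} \subseteq U_{p_2}$ and the pointwise definition of $D^q$, which is exactly your first argument, and your alternative via iterating Corollary~\ref{cor:Setminus-tile} is the other natural reading of the paper's presentation. Nothing is missing.
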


The final result of this section reveals the symmetric nature of the tilings of the rectangular grid graphs. One way of thinking about this is that for any given tile $T$, there will be a tile $T'$ that is also in the tiling that can be obtained by applying exactly $q+2k^2+2k+1$ downward operations to the original tile $T$. There will be $n+2k$ tiles (including $T$ but not $T'$) between $T$ and $T'$ in the ordering of the tiles. This is encapsulated in the situation where we have already pushed the current set of tiles, $U_p$, down by $q$.

We need the following lemma.
\begin{lemma} $D^{q+2k^2+2k+1}[U_{p+n+2k}]= D^q[U_{p}]$. \label{lem:moving_down_is_moving_along}
\end{lemma}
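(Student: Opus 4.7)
The plan is to reduce the stated equality to a tile-wise identity, and then exploit the lattice periodicity of the tiling of $G_{\infty,n}$.

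Using that $D^{a+b} = D^a \circ D^b$ directly from the coordinate definition of $D$ and that $D^a$ distributes over unions, we rewrite
\[
D^{q+2k^2+2k+1}[U_{p+n+2k}] = D^q\Bigl[\,\bigcup_{i \geq p+n+2k} D^{2k^2+2k+1}[T_i]\,\Bigr].
\]
Matching this against $D^q[U_p] = \bigcup_{i\ge p} D^q[T_i]$, it suffices to prove the tile-wise identity $D^{2k^2+2k+1}[T_{i+n+2k}] = T_i$ for every $i \in \mathbb{Z}$. Since each $T_i$ is the intersection of a $k$-ball around its center $(x_i,y_i) \in S'$ with $V(G_{\infty,n})$, and the strip is invariant under $x$-translation, this further reduces to the center-level identity $(x_{i+n+2k}, y_{i+n+2k}) = (x_i + 2k^2+2k+1,\ y_i)$.

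To establish the center identity, I would use that the horizontal vector $v = (2k^2+2k+1,0)$ lies in the lattice $S$, via $v = k\cdot(k,k+1) + (k+1)\cdot(k+1,-k)$. The translation $\phi(x,y) = (x,y)+v$ therefore permutes $S$, and since it fixes $y$ it restricts to a bijection on $S'$. The goal is to show $\phi(T_i) = T_{i+c}$ for a single constant $c$ and then identify $c = n+2k$ by a direct count: for each fixed $y$ in the range $[1-k, n+k]$, there is a unique $x \bmod 2k^2+2k+1$ with $(x,y) \in S$, so the fundamental slab $[x_0, x_0+2k^2+2k+1) \times [1-k, n+k]$ contains exactly $n+2k$ centers of $S'$. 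Since $\phi$ generates the subgroup of horizontal translations preserving the strip-tiling, any bijective indexing of $S'$ commuting with the free $\mathbb{Z}$-action generated by $\phi$ must shift indices by the orbit count $n+2k$.

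The main obstacle is verifying that $\phi$ commutes with the next-tile map, particularly for rule (3). Rule (2) is a pure translation and so is immediately compatible with $\phi$. Rule (3) is more delicate: its offset $\alpha = \lfloor(x-1)/(k+1)\rfloor$ changes nontrivially under $\phi$, since $2k^2+2k+1 \equiv 1 \pmod{k+1}$, and one must verify that this change is exactly what is needed for the rule-(3) destination also to shift by $v$. I would handle this by combining the explicit formulas with the geometric picture of the traversal---walking NE along a diagonal via rule (2) and jumping back down to the next diagonal via rule (3)---and confirming that the composition of $n+2k$ consecutive indexing steps starting from any $T_i$ produces $\phi(T_i)$.
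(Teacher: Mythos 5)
Your proposal is correct and follows essentially the same route as the paper: both reduce to the tile-wise identity $D^{2k^2+2k+1}[T_{p'+n+2k}]=T_{p'}$, use that $(2k^2+2k+1,0)=k\cdot(k,k+1)+(k+1)\cdot(k+1,-k)$ is the horizontal period of the lattice $S$, and count exactly $n+2k$ tile centres per period (one for each $y\in[1-k,n+k]$). The one step you flag but leave unfinished --- that indexing rules (2) and (3) are compatible with this horizontal translation, so the index shift is uniformly $n+2k$ --- is treated at a comparable level of detail in the paper, which simply asserts that the tiles indexed $p',\ldots,p''-1$ are exactly those centred on the $k+1$ diagonals between consecutive bottom-row tiles.
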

\begin{proof}
It is sufficient to show that $D^{2k^2+2k+1}[T_{p'+n+2k}] = T_{p'}$ for any $p' \geq p$, since $D^{q+2k^2+2k+1}[U_{p+n+2k}]$ may be decomposed into $\bigcup_{p' \geq p} D^{2k^2+2k+1}[T_{p'+n+2k}]$ and $D^q[U_{p}]$ can be decomposed as $\bigcup_{p' \geq p} D^q[T_{p'}]$. 

For simplicity, we consider just the case where $T_{p'} = T_{x_{1},1-k}$, although the other cases are done similarly. 
With respect to $T_{p'} = T_{x_1,1-k}$, define $$S'_i = \{ (x_1 + (2k+1)i + \alpha k, 1-k + i +\alpha(k+1))  : \alpha \in \mathbb{Z}\} \subseteq S'.$$ 
Note that each $(x,y) \in S'_i$ has $y\equiv (1-k)+i \pmod{k+1}$. 
For each $y\in [1-k,n+k]$, there is a unique $x$ with $(x,y)\in S'_i$ when $i$ satisfies $y\equiv (1-k)+i \pmod{k+1}$, and $(x,y) \notin S'_i$ otherwise.  
This implies that for each $y\in [1-k,n+k]$, there is a unique $x$ with $(x,y)\in \bigcup_{i \in [0,k]} S'_i$. 
As such, $\bigcup_{i\in [0,k]} S'_i$ contains exactly $(n+k) - (1-k) +1 = n+2k$ vertices. 
    
Let $T_{p''}=T_{x_2,1-k}$ be the tile in the tiling of $U_p$ with $x_2>x_1$ such that $x_2$ is as small as possible. 
Note that this means $D^e[T_{p''}] = T_{p'}$ for some $e>0$. 
Since $(x_2, 1-k) \in S'_i$ implies $i \equiv {k+1}$, we must have that $(x_2, 1-k) \in S'_{k+1}$. 
In particular, from the definition of $S'_{k+1}$ we have that $$(x_2,1-k) = (x_1 + (2k+1)(k+1)+\alpha k, 1-k+(k+1) + \alpha(k+1)).$$  
Since we must have $\alpha=-1$, we find $x_2 = x_1 + 2k^2+2k+1$, and so $D^{2k^2+2k+1}[T_{p''}] = T_{p'}$. 
    
We have that $\{T_{x,y} : (x,y) \in \bigcup_{i\in [0,k]} S'_i \} = \{T_j : j \in[p', p''-1]\}$, and since the first set has exactly $n+2k$ elements, so does the second set. We then have that $p'' = p' + n +2k$, and so we have found that $D^{2k^2+2k+1}[T_{p' + n +2k}]=T_{p'}$, as required. 
\end{proof}

\subsection{A Solution for the Finite Grids $G_{n, n}$} \label{ssec:FiniteGrids}

So far, we have explored the tilings for the square and rectangular grid graphs, which are both infinite graphs. In this subsection, we will apply the results on those grids to the finite $n \times n$ Cartesian grid graphs to find an upper bound on $\prox_k(G_{n, n})$.  
In particular, we will create a cop strategy for the $k$-proximity game from the tiling of a rectangular grid, which yields an upper bound on $\text{prox}_k(G_{n,n})$. 

For $G_{n, n}$, define the tiles $T_{x,y}'= T_{x,y} \cap V(G_{n, n})$ and $T_{i}'= T_i \cap V(G_{n, n})$.  
A cop playing the $k$-proximity game on $G_{n, n}$ \emph{removes the tile} $T_{x,y}'$ by playing on the unique vertex $(x',y')\in V(G_{n, n})$ that minimizes $|x'-x|+|y'-y|$. 
Note that each vertex $(x'',y'') \in T_{x,y}'$ satisfies $|x-x''|+|y-y''|\leq k$ by definition, and also has a minimum-length path in $G_\infty$ from itself to $(x,y)$ through the point $(x',y')$. 
Therefore, $|x'-x''|+|y'-y''|\leq k$, and so when the cop plays $(x',y')\in V(G_{n, n})$, the cop either catches the robber or knows that the robber is not on a vertex of $T_{x,y}'$ within $G_{n, n}$ (and hence, the terminology that we ``remove'' the tile, as we ensure the robber is not on these vertices). If we consider the locations where the robber could be as an infection, then removing the tile is equivalent to clearing the infection on those vertices. 

The results of Subsection~\ref{ssec:TilingRectangularGrids} in fact enable us to understand the dynamics of controlling an infection on the rectangular grid $G_{\infty,n}$. 
In Figure~\ref{fig:rectangle_}, we show a portion of the rectangular grid and represent infected vertices as those vertices covered by some tile. This figure displays the removal of two tiles, $T_0$ and $T_1$, followed by the robber moving (or the infection spreading to all neighboring vertices), and then the removal of the next two tiles, $T_2$ and $T_3$. 
This provides a cop strategy, and this section will deal with converting this idea into a strategy to completely clear an infection on $G_{n,n}$. 

The following two corollaries result from inspection using Corollaries \ref{cor:Setminus-tile} and \ref{cor:N_is_D}, respectively. The high-level statement of these corollaries is that removing the infection from tiles on $G_{n, n}$ is equivalent to removing the infection from the respective tile from $G_{\infty,n}$ and that the infection growth of $G_{n, n}$ is the same as the infection growth on $G_{\infty,n}$, except where the resulting vertices are restricted to $V(G_{n, n})$. 

\begin{corollary} \label{cor:cop_move_finite_grid}
    \[
    \left( D^q[U_p] \cap V(G_{n, n}) \right) \setminus \left( D^q[T_p] \cap V(G_{n, n}) \right)
    =
    D^q[U_{p+1}] \cap V(G_{n, n}).
    \]
\end{corollary}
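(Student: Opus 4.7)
The plan is to derive this corollary by combining a standard set-theoretic identity with Corollary~\ref{cor:Setminus-tile}, which already tells us how removing a single tile interacts with the downward-shift operator on the infinite rectangular grid. The work is essentially bookkeeping: intersecting with $V(G_{n,n})$ commutes with set-minus in the appropriate sense, so the restriction to the finite grid can be pulled outside.

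First I would record the elementary identity that for any sets $A$, $B$, $C$,
\[
(A \cap C) \setminus (B \cap C) = (A \setminus B) \cap C.
\]
This is immediate from chasing elements: an element lies in the left-hand side iff it lies in $A$ and $C$ but fails to lie in $B \cap C$, and since it already lies in $C$, failing to lie in $B \cap C$ is equivalent to failing to lie in $B$.

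Next, I would apply this identity with $A = D^q[U_p]$, $B = D^q[T_p]$, and $C = V(G_{n,n})$, so that the left-hand side of the corollary becomes
\[
\bigl( D^q[U_p] \setminus D^q[T_p] \bigr) \cap V(G_{n,n}).
\]
Then Corollary~\ref{cor:Setminus-tile}, applied to the infinite rectangular grid $G_{\infty,n}$, gives $D^q[U_p] \setminus D^q[T_p] = D^q[U_{p+1}]$. Substituting yields $D^q[U_{p+1}] \cap V(G_{n,n})$, which is the right-hand side.

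There is no real obstacle here; the only thing to be careful about is that $D^q[U_p]$ and $D^q[T_p]$ live naturally inside the infinite rectangular grid $G_{\infty,n}$, and we only restrict to the induced subgraph $G_{n,n}$ at the end, so Corollary~\ref{cor:Setminus-tile} may be invoked before restriction. Once that is observed, the argument is a single line plus the set identity above.
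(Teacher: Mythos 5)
Your proof is correct and is essentially the argument the paper intends: the paper dispatches this corollary as following ``by inspection using Corollary~\ref{cor:Setminus-tile},'' and your write-up simply makes that inspection explicit via the identity $(A \cap C) \setminus (B \cap C) = (A \setminus B) \cap C$ before invoking Corollary~\ref{cor:Setminus-tile} on the infinite rectangular grid. Nothing further is needed.
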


\begin{corollary} \label{cor:robber_move_finite_grid}
    \[
    N\left[ D^q[U_p] \cap V(G_{n, n}) \right] = 
    N\left[ D^q[T_p] \right] \cap V(G_{n, n}) 
    =
    D^{q+1}[U_{p}] \cap V(G_{n, n}).
    \]
\end{corollary}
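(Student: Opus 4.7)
The plan is to establish both equalities by reducing to Corollary~\ref{cor:N_is_D}, with the bulk of the work going into checking compatibility of the closed-neighborhood operator with restriction to $V(G_{n,n})$. I read the middle expression as $N[D^q[U_p]] \cap V(G_{n,n})$ (the $T_p$ appearing to be a typographical slip for $U_p$, since otherwise Corollary~\ref{cor:N_is_D} does not produce $D^{q+1}[U_p]$). The equality $N[D^q[U_p]] \cap V(G_{n,n}) = D^{q+1}[U_p] \cap V(G_{n,n})$ is then immediate by intersecting both sides of $N[D^q[U_p]] = D^{q+1}[U_p]$ with $V(G_{n,n})$.

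For the remaining equality $N[D^q[U_p] \cap V(G_{n,n})] = N[D^q[U_p]] \cap V(G_{n,n})$, where the outer $N$ on the left is taken in $G_{n,n}$ because its argument lies in $V(G_{n,n})$, the $\subseteq$ direction is routine from monotonicity of $N$ together with the fact that the left-hand neighborhood already lies in $V(G_{n,n})$. For $\supseteq$, I would pick any $v \in N[D^q[U_p]] \cap V(G_{n,n})$ and a witness $u \in D^q[U_p]$ with $u = v$ or $u$ adjacent to $v$ in $G_{\infty, n}$. If $u \in V(G_{n,n})$ the witness transfers directly; otherwise $u$ must sit at row $0$ or $n+1$ (the only rows of $V(G_{\infty,n})$ missing from $V(G_{n,n})$), and adjacency forces $v$ to lie in the first or last row of the $n \times n$ grid and $u$ to be its unique vertical neighbor outside the grid.

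To resolve this boundary case, I would combine the already-established fact that $v \in D^{q+1}[U_p]$, i.e., $(v_1+q+1, v_2) \in U_p$, with the tile structure of Subsection~\ref{ssec:TilingRectangularGrids}: because $(v_1+q+1, v_2)$ lies in some tile $T_i$ with $i \geq p$ and each tile is a diamond of radius $k \geq 1$, at least one of its $G_{\infty,n}$-neighbors also lies in $T_i \subseteq U_p$, and a short case analysis on the position of $(v_1+q+1, v_2)$ within $T_i$ identifies this neighbor as the down-shift of an in-grid neighbor of $v$, yielding a witness in $D^q[U_p] \cap V(G_{n,n})$. The main technical obstacle is this tile-geometry case check near the top and bottom rows, but the paper's ``by inspection'' indicates it is a direct verification from the explicit tile shapes described earlier.
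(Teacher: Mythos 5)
Your reading of the statement (the middle $T_p$ as a typo for $U_p$, the outer $N$ taken in $G_{n,n}$), your one-line derivation of the second equality from Corollary~\ref{cor:N_is_D}, and your $\subseteq$ direction for the first equality are all fine, and they match the only content the paper actually supplies (the paper says no more than ``by inspection using Corollary~\ref{cor:N_is_D}''). The gap is in your $\supseteq$ direction, and it is not repairable: that inclusion is false in general, so no case analysis on the tile geometry can close it. Your specific step already breaks down --- a tile $T_{x,y}\cap V(G_{\infty,n})$ can be a single vertex (e.g.\ $T_{a,1-k}\cap V(G_{\infty,n})=\{(a,1)\}$), and even when $(v_1+q+1,v_2)$ has a same-tile neighbour, that neighbour may be the vertex directly above it, whose down-shift is again the out-of-grid vertex $(n+1,v_2)$ rather than an in-grid neighbour of $v$. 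Note also that the two boundary rows are not symmetric: since $D^q[U_p]\subseteq N[D^q[U_p]]=D^{q+1}[U_p]$, the set $D^q[U_p]$ is upward closed, so a witness at row $0$ forces $v$ itself into $D^q[U_p]$ and that case is trivial; all the danger is at row $n+1$.

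For a concrete failure, take $k=1$, $n=3$, and let $U_p$ be the union of the tiles from $T_{8,1}$ onward in the paper's ordering, so that the minimum first coordinate of $U_p$ in columns $1,2,3$ is $7,8,8$ respectively. With $q=4$ one checks $D^4[U_p]\cap V(G_{3,3})=\{(3,1)\}$, whence $N\bigl[D^4[U_p]\cap V(G_{3,3})\bigr]=\{(3,1),(2,1),(3,2)\}$, while $D^{5}[U_p]\cap V(G_{3,3})=\{(2,1),(3,1),(3,2),(3,3)\}$: the vertex $(3,3)$ lies in the right-hand set (because $(8,3)\in T_{9,3}\subseteq U_p$) but none of its in-grid closed neighbours lies in $D^4[U_p]$. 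The mechanism is a length-two flat step in the lower envelope of $D^q[U_p]$ sitting at height exactly $n+1$ against the side of the grid, and it recurs for all $n$. The correct statement, and the only one used in the proof of Theorem~\ref{thm:prox_k_grids_upper} (which explicitly permits the potentially infected set to over-approximate), is the inclusion
\[
N\left[ D^q[U_p] \cap V(G_{n, n}) \right] \subseteq D^{q+1}[U_{p}] \cap V(G_{n, n}),
\]
which is exactly your easy half: monotonicity of $N$, the fact that $G_{n,n}$ is an induced subgraph of $G_{\infty,n}$, and Corollary~\ref{cor:N_is_D}. You should prove that and stop there rather than attempt the reverse containment.
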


\begin{figure}
    \centering
    \includegraphics[scale=0.2]{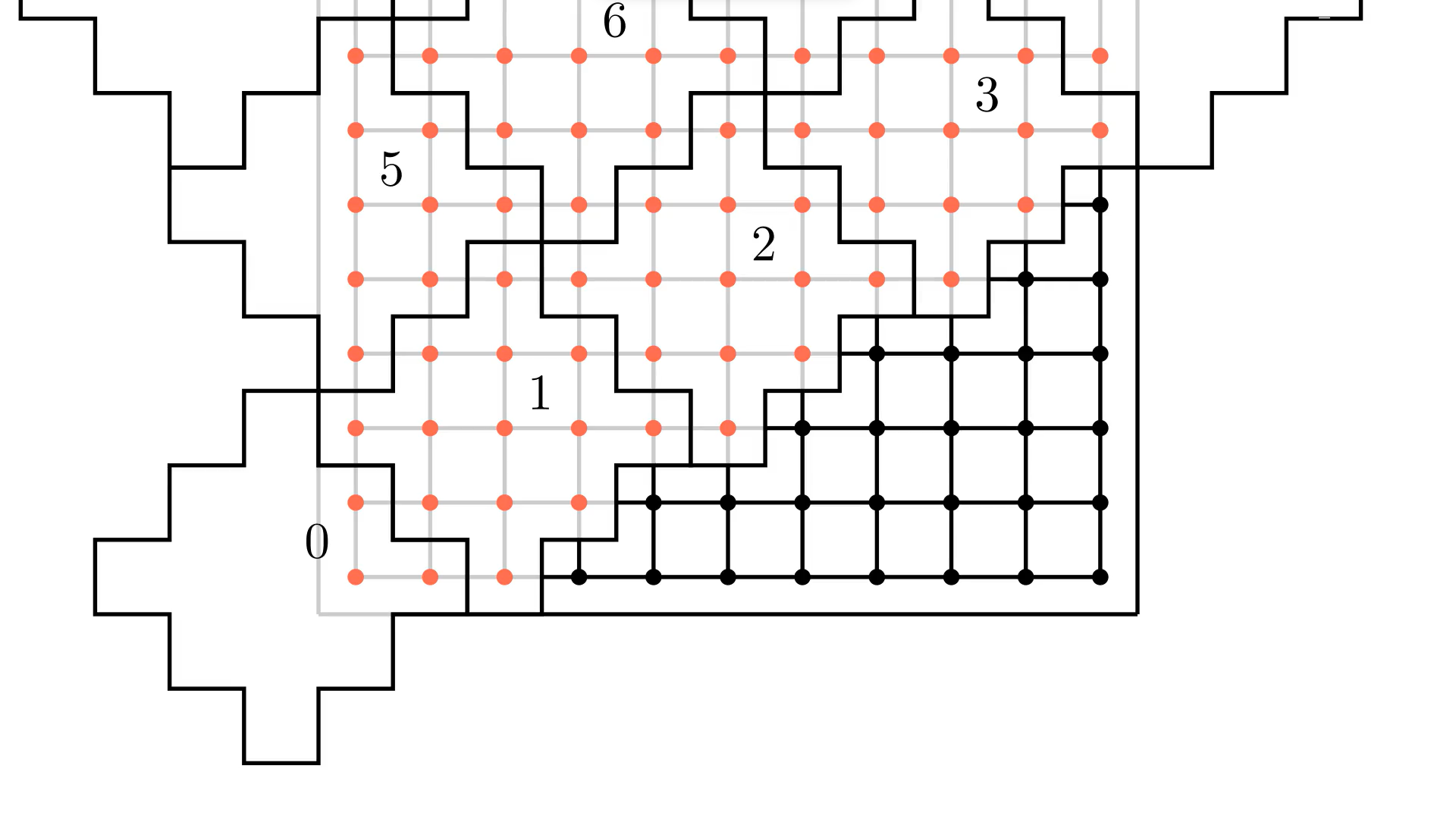}
    \includegraphics[scale=0.2]{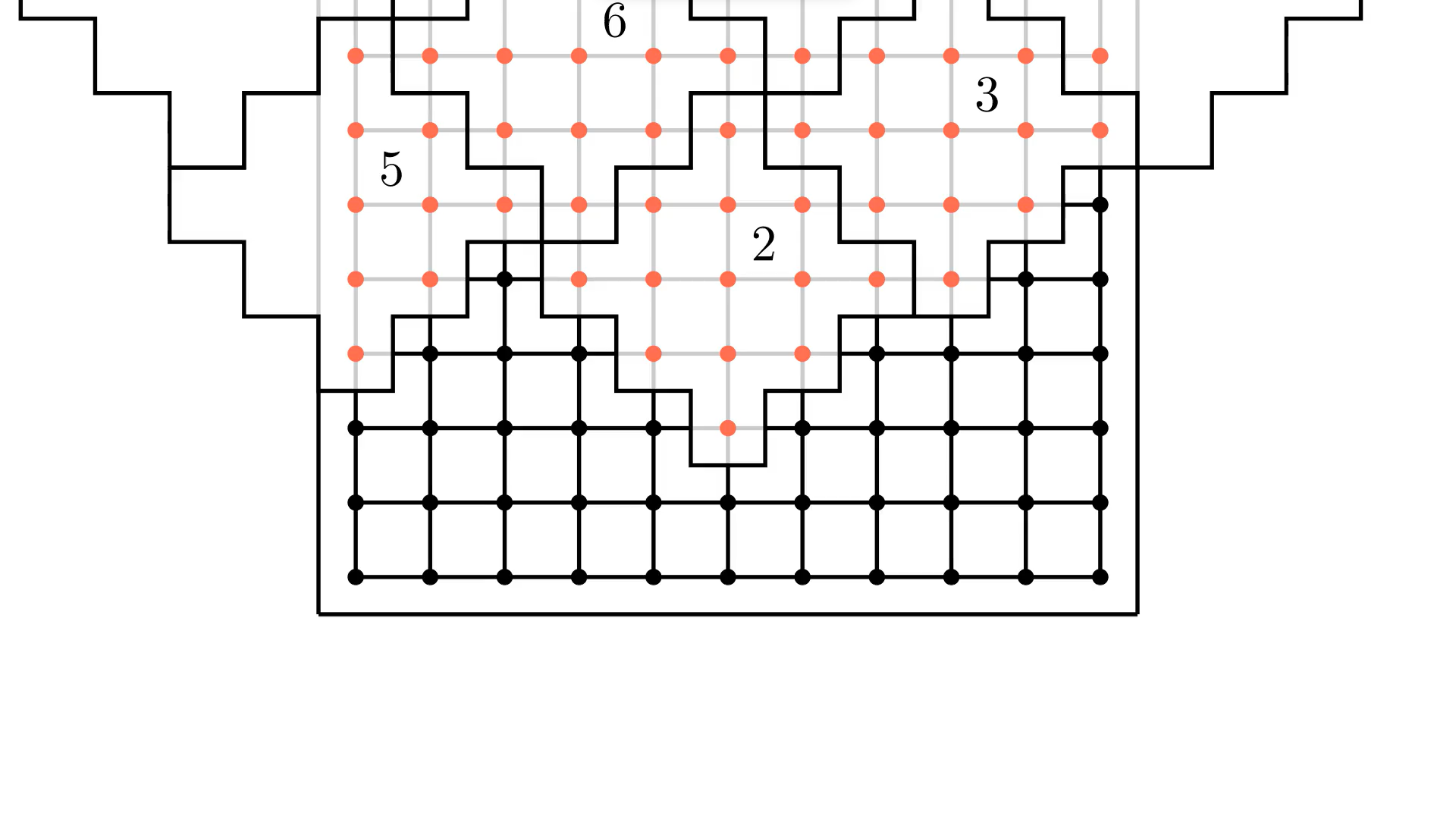}
    \includegraphics[scale=0.2]{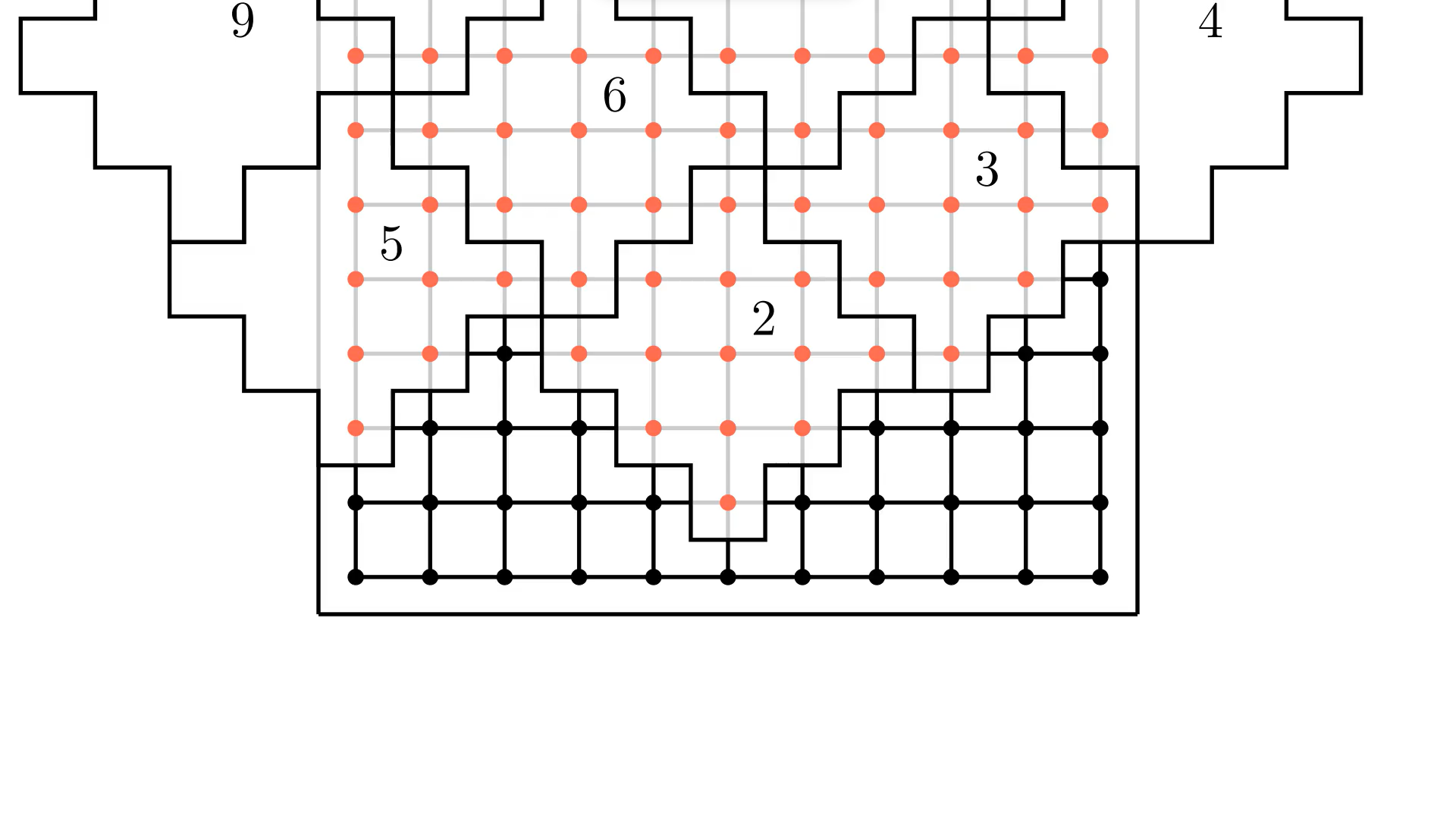}
    \includegraphics[scale=0.2]{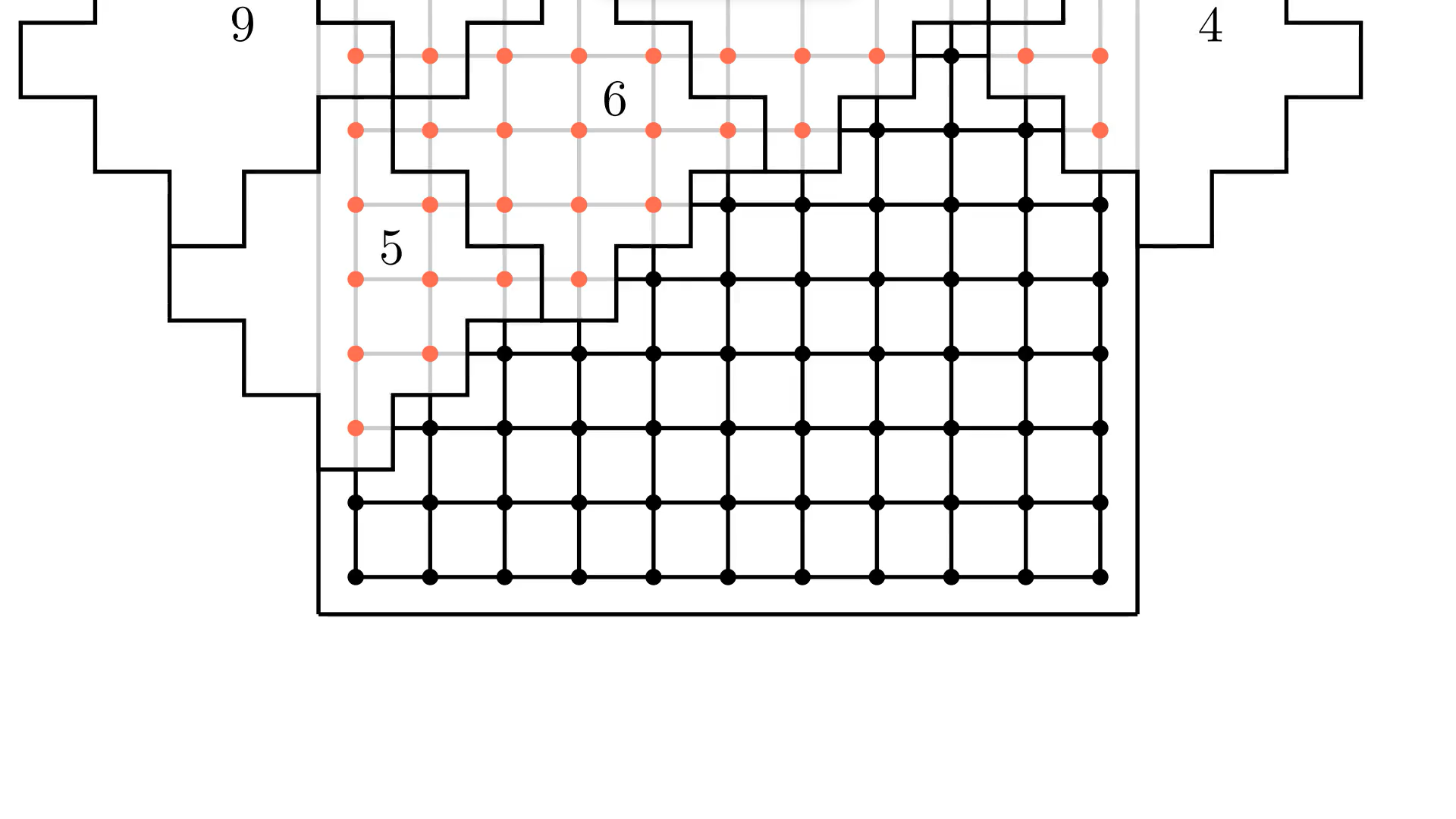}
    \caption{An example of how to control an infection on $G_{\infty,n}$. The infected vertices are red. The infection on two tiles, $T_0$ and $T_1$, is removed by two cops playing at the appropriate location (top-left to top-right images). The infection spreads to all neighboring vertices (top-right to bottom-left images). The infection on the next two tiles, $T_2$ and $T_3$, is removed by two cops playing at the appropriate location (bottom-left to bottom-right images).}
    \label{fig:rectangle_}
\end{figure}

We will consider a \emph{potentially infected} set $P_t$ of vertices, by which we mean a set of vertices containing all the vertices that the robber may be on at time $t$ but might also contain vertices that the robber cannot be on. That is, if a vertex is not in the potentially infected set, then the robber cannot be on that vertex.
It follows that if the cops can play from time $t$ and completely clear the potentially infected set $P_t$, then their strategy can capture the robber. The cop may add vertices to the current potentially infected set, which will be used to ease analysis. 

\begin{theorem} \label{thm:prox_k_grids_upper}
\[
\mathrm{prox}_k(G_{n, n}) \leq \Big\lceil \frac{n+2k+1}{2k^2+2k+1}\Big\rceil
\]    
\end{theorem}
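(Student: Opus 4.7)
The plan is to use $c = \lceil (n+2k+1)/(2k^2+2k+1) \rceil$ cops and run the tile-removal strategy previewed in Figure~\ref{fig:rectangle_}. First I fix an integer $p_0$ chosen negative enough that $V(G_{n,n}) \subseteq U_{p_0}$, which exists because $\bigcup_p U_p = V(G_{\infty,n})$. The goal is to maintain the invariant that after $t$ complete rounds the set $P_t$ of still-possible robber locations satisfies
\[ P_t \subseteq D^t[U_{p_0 + tc}] \cap V(G_{n,n}),\]
initializing with $P_0 \subseteq D^0[U_{p_0}] \cap V(G_{n,n}) = U_{p_0} \cap V(G_{n,n})$.

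The inductive step describes round $t+1$. All $c$ cops play simultaneously: the $i$-th cop probes the unique vertex of $V(G_{n,n})$ that minimizes distance to the center of $T_{p_0 + tc + i - 1}$, which (as explained in the paragraph preceding the theorem) removes the tile $T_{p_0 + tc + i - 1} \cap V(G_{n,n})$ from the potentially infected set. Iterating Corollary~\ref{cor:cop_move_finite_grid} $c$ times shows the set shrinks to $D^t[U_{p_0 + (t+1)c}] \cap V(G_{n,n})$. Then the robber moves, expanding the set by the closed-neighborhood operator $N[\cdot]$, and Corollary~\ref{cor:robber_move_finite_grid} identifies the result as $D^{t+1}[U_{p_0 + (t+1)c}] \cap V(G_{n,n})$, preserving the invariant.

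To see that the strategy terminates, I will apply Lemma~\ref{lem:moving_down_is_moving_along} iteratively to rewrite, for each integer $j \geq 0$,
\[ D^{j(2k^2+2k+1)}[U_{p_0 + jc(2k^2+2k+1)}] = U_{p_0 + j[c(2k^2+2k+1) - (n+2k)]}.\]
The definition of $c$ gives $c(2k^2+2k+1) - (n+2k) \geq 1$, so the effective tile index increases by at least one per block of $2k^2+2k+1$ rounds. Since only finitely many tiles $T_i$ meet $V(G_{n,n})$, the invariant eventually forces $P_t = \emptyset$, at which point the robber must have been seen, proving $\mathrm{prox}_k(G_{n,n}) \leq c$. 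The main obstacle (rather than real difficulty) is the careful bookkeeping needed to pass back and forth between the finite grid $G_{n,n}$ and the infinite strip $G_{\infty,n}$ using Corollaries~\ref{cor:cop_move_finite_grid} and~\ref{cor:robber_move_finite_grid}, in particular handling tiles whose centers lie outside the finite grid and keeping the $c$ simultaneous cop probes consistent with the single-tile removal statement of Corollary~\ref{cor:cop_move_finite_grid}.
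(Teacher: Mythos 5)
Your proposal is correct and takes essentially the same route as the paper's proof: the same cop count, the same tile-removal round structure driven by Corollaries~\ref{cor:cop_move_finite_grid} and~\ref{cor:robber_move_finite_grid}, and termination via Lemma~\ref{lem:moving_down_is_moving_along}, with only a cosmetic reparametrization (you fix a sufficiently negative starting index $p_0$ and track the net index gain $c(2k^2+2k+1)-(n+2k)\geq 1$ per block of $2k^2+2k+1$ rounds, whereas the paper starts from a large downward shift $q$ and argues the potentially infected set shrinks over each such block). The one notational slip is that in round $t+1$ the cops should target the shifted tiles $D^t[T_{p_0+tc+i-1}]$ rather than $T_{p_0+tc+i-1}$ themselves, but this does not affect the argument.
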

\begin{proof}
    Recall from the previous subsection that $U_p = \bigcup_{i \geq p} T_i$ is a subset of the vertices in the rectangular grid graph $G_{\infty,n}$. 
Suppose that at some time, the potentially infected set in $G_{n, n}$ is $D^q[U_p] \cap V(G_{n, n})$. 
Note that this is true during the first step if we take $q$ to be large enough, with $[n]\times[n] \subseteq D^q[U_{0}]$ when $q \geq k \lceil \frac{n}{k+1}\rceil +k$.
Let $\hh=\Big\lceil \frac{n+2k+1}{2k^2+2k+1}\Big\rceil$ be the number of cops being used in the $k$-proximity game on $G_{n,n}$. 

On their move, the cops remove the tiles $D^q[T_p'], \ldots, D^q[T_{p+\hh-1}']$ in the $k$-proximity game of $G_{n, n}$. 
By making $\hh$ applications of Corollary~\ref{cor:cop_move_finite_grid}, this means that the robber can only reside on a vertex in $D^q[U_{p+\hh}] \cap V(G_{n, n})$. 
It is now the robber's move, and since the robber can move to a neighbor, the robber can now only reside in $N[D^q[U_{p+\hh}] \cap V(G_{n, n})]$, which is exactly $D^{q+1}[U_{p+\hh}] \cap V(G_{n, n})$ by Corollary~\ref{cor:robber_move_finite_grid}. 

The cops repeat this strategy over $t'$ rounds, and so the potentially infected set becomes $D^{q+t'}[U_{p+t'\hh}] \cap V(G_{n, n})$. 
Since we have assumed $\hh =\lceil \frac{n+2k+1}{2k^2+2k+1}\rceil$, we have that $\hh \cdot (2k^2+2k+1) > n+2k$.
Taking $t'=2k^2+2k+1$, we find that 
\begin{align*}
D^{q+2k^2+2k+1}[U_{p+(2k^2+2k+1)\hh}] 
&\subseteq D^{q+2k^2+2k+1}[U_{p+n+2k}] \\
&= D^{q+2k^2+2k+1-(2k^2+2k+1)}[U_{p}]\\
&= D^q[U_{p}],
\end{align*}
where the first subset relation follows by Corollary~\ref{cor:subsets_have_larger_index} since $\hh \cdot (2k^2+2k+1) > n+2k$ and the second relation follows by Lemma~\ref{lem:moving_down_is_moving_along}. 
That is, if the potentially infected set is $D^q[U_p] \cap V(G_{n, n})$, then after a further $t'=2k^2+2k+1$ rounds, the new potentially infected set is strictly smaller than the potentially infected set before these $t'$ rounds. 

Since there are a finite number of vertices in $G_{n, n}$, we repeat this strategy until the potentially infected set becomes empty, at which point we are certain to have captured the robber. 
\end{proof}

Surprisingly, techniques from work on the isoperimetric values for $G_{n, n}$ give a near-exact matching lower bound to the upper bound of Theorem~\ref{thm:prox_k_grids_upper}. 
\begin{theorem} \label{thm:main_grids_prox}
Let $\alpha,\beta$ be the unique values such that $n = (\alpha-1) (2k^2+2k+1) +\beta$ with $\beta \in [0,2k^2+2k]$. 
If $\beta \in [0,2k^2]$, then \[\prox_k(G_{n, n}) = \alpha.\]
Otherwise, 
\[\prox_k(G_{n, n}) \in \{\alpha,\alpha+1\}.\] 
\end{theorem}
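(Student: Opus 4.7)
The plan is to combine the upper bound from Theorem~\ref{thm:prox_k_grids_upper} with a matching lower bound extracted from the vertex-isoperimetric inequality in Theorem~\ref{thm:isoperimetric}. For the upper bound I would unpack the ceiling by writing $n+2k+1 = (\alpha-1)(2k^2+2k+1) + (\beta + 2k + 1)$ and splitting on $\beta$. When $\beta \in [0, 2k^2]$ the remainder $\beta+2k+1$ lies in $[2k+1,\,2k^2+2k+1]$, so $\lceil (n+2k+1)/(2k^2+2k+1) \rceil = \alpha$ and $\prox_k(G_{n,n}) \leq \alpha$; when $\beta \in [2k^2+1, 2k^2+2k]$ the remainder lies in $[2k^2+2k+2,\,2k^2+4k+1]$, so the ceiling is $\alpha+1$ and $\prox_k(G_{n,n}) \leq \alpha+1$. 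This immediately delivers the upper bound in both cases of the theorem.

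For the lower bound $\prox_k(G_{n,n}) \geq \alpha$, I would invoke Theorem~\ref{thm:isoperimetric}, which yields $\prox_k(G_{n,n}) > \Phi_V(G_{n,n})/\Delta_k(G_{n,n})$. The denominator is easy to pin down: since $n \geq 2k+1$, the grid contains a vertex whose full $L_1$-ball of radius $k$ sits inside $V(G_{n,n})$, and that ball contains exactly $2k^2+2k+1$ vertices, so $\Delta_k(G_{n,n}) = 2k^2+2k+1$. Given an isoperimetric bound of the form $\Phi_V(G_{n,n}) \geq n$, one obtains
\[ \prox_k(G_{n,n}) > \frac{n}{2k^2+2k+1} = (\alpha - 1) + \frac{\beta}{2k^2+2k+1} \geq \alpha - 1, \]
and integrality of $\prox_k(G_{n,n})$ upgrades this to $\prox_k(G_{n,n}) \geq \alpha$, matching the upper bound.

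The main technical obstacle is therefore the inequality $\Phi_V(G_{n,n}) \geq n$. I would prove this by identifying a size $m^\star$ at which the minimum border equals $n$; natural candidates are $m^\star = n\lceil n/2\rceil$ (for which the half-grid $\{(i,j): j \leq \lceil n/2\rceil\}$ realises border exactly $n$) and $m^\star = \binom{n+1}{2}$ (for which the corner triangle $\{(i,j): i+j \leq n+1\}$ realises border exactly $n$). The cleanest way to show that no set of size $m^\star$ beats this is to invoke the classical vertex-isoperimetric inequality for the two-dimensional grid (Bollob\'as--Leader), which identifies the extremal family explicitly. Alternatively, a self-contained column-projection argument works: given a set $S$ of size $m^\star$, look at the column-sizes $s_j = |S \cap (\{j\} \times [n])|$ and argue that either $S$ meets every column, in which case the within-column border contributes at least one vertex per column, or the transitions between empty and non-empty columns force enough between-column border vertices; either way the total border is at least $n$. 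Combining this with the upper bound yields $\prox_k(G_{n,n}) = \alpha$ when $\beta \in [0,2k^2]$ and pins $\prox_k(G_{n,n})$ into $\{\alpha, \alpha+1\}$ otherwise, as claimed.
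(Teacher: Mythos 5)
Your proposal matches the paper's proof essentially exactly: the upper bound is read off from Theorem~\ref{thm:prox_k_grids_upper} by the same ceiling arithmetic, and the lower bound combines Theorem~\ref{thm:isoperimetric} with $\Delta_k(G_{n,n}) \le 2k^2+2k+1$ and the Bollob\'as--Leader result that $\Phi_V(G_{n,n}) = n$, which is precisely the citation the paper uses. Your alternative ``self-contained column-projection'' sketch for $\Phi_V(G_{n,n}) \ge n$ is not needed and, as stated, is incomplete (a column fully contained in $S$ contributes no within-column border), so you should rely on the cited isoperimetric inequality as the paper does.
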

\begin{proof}
As a consequence of Theorem~8 of \cite{MR1082842}, it was shown that $\Phi_V(G_{n,n})=n$. In the grid, there are at most $2k^2+2k+1$ vertices of distance at most $k$ from any vertex. Theorem~\ref{thm:isoperimetric} yields $\prox_k(G)\geq\lceil \frac{n+1}{2k^2+2k+1} \rceil = \alpha$. 

Theorem~\ref{thm:prox_k_grids_upper} yields $\prox_k(G_{n, n}) \leq \alpha$ when $\beta \in [0,2k^2]$, and $\prox_k(G_{n, n}) \leq \alpha+1,$ otherwise. 
\end{proof}

\subsection{The $k$-Visibility Localization Number for Cartesian Grids} \label{k_vis_localization_finite_grids}

In this subsection, we find that the values of $\zeta_k(G_{n, n})$ are determined by $\prox_k(G_{n, n})$ for most values of $k$ and $n$. Three different cases need to be considered: $k\geq 2$, $k=1$, and $k=0$. 
It will be useful to note the following.

\begin{lemma}\cite{car}
If $G$ is a graph with $\zeta_k(G)=1$, then $G$ has girth at least $6$.
\end{lemma}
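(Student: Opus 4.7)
The plan is to reduce the $k$-visibility statement to the classical Localization game. The paper already records $\zeta(G) \le \zeta_k(G)$ for every graph $G$ and every $k$, so the hypothesis $\zeta_k(G)=1$ forces $\zeta(G)=1$. It therefore suffices to establish the classical fact: if $\zeta(G)=1$, then $G$ has girth at least $6$. I would prove the contrapositive.

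Assume $G$ contains a cycle of length at most $5$, and let $C=v_0v_1\cdots v_{\ell-1}$ be a shortest cycle, so $\ell\in\{3,4,5\}$. A chord of $C$ or a shortcut path through the rest of $G$ between two vertices of $C$ would produce a strictly shorter cycle, so $C$ is isometric in $G$; in particular $d_G(v_i,v_j)=d_C(v_i,v_j)$. I will sketch an omniscient robber strategy against a single cop that keeps the cop's candidate set of size at least two forever, contradicting $\zeta(G)=1$.

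The robber always sits on $C$ and maintains the invariant that after each round the cop's candidate set contains two distinct vertices of $C$. For any probe $u$, consider the sequence $d(u,v_0),d(u,v_1),\dots,d(u,v_{\ell-1})$: consecutive entries differ by at most $1$ by the triangle inequality, and by isometry the sequence lies in an interval of length at most $\lfloor \ell/2\rfloor\le 2$. A parity/pigeonhole argument then shows that for odd $\ell\in\{3,5\}$ at least one pair of cyclically consecutive entries coincides (since an odd number of $\pm 1$'s cannot sum to $0$), and for $\ell=4$ either a consecutive pair coincides or the two antipodal pairs coincide. In either case the probe fails to separate some natural pair on $C$. The robber's rule is: if the current invariant pair survives the probe, do nothing; otherwise step one edge along $C$ to land inside a new pair produced by the pigeonhole. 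Since the pair shifts by at most one position on $C$ and $\ell\le 5$, the robber can always follow with a legal move.

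The main obstacle is the case analysis: one must verify that for each $\ell\in\{3,4,5\}$ and each probe vertex $u$, the shifted pair is reachable by a single robber move and is consistent with the probe's returned distance. The $\ell=3$ case is essentially immediate since at most two distinct distances occur among three cycle vertices. The $\ell=4$ case splits according to whether the probe-induced distance profile on $C$ is constant, alternating (forcing use of the antipodal pair), or has exactly one repeat. The $\ell=5$ case is the most delicate: the profile on $C$ can take three values with five vertices, and one must track the four or five possible patterns to confirm the robber never runs out of safe consecutive pairs. Establishing this case analysis yields the robber's winning strategy with a single cop, so $\zeta(G)\ge 2$, contradicting the hypothesis and completing the proof.
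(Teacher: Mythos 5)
The paper offers no proof of this lemma at all --- it is imported directly from~\cite{car} as a black box, with the reduction $\zeta(G)\le\zeta_k(G)$ (stated in the introduction) doing the work of transferring the girth condition from $\zeta$ to $\zeta_k$. Your opening reduction is therefore exactly the step the paper itself relies on, and it is valid: a $k$-visibility strategy wins a fortiori with full distance information, so $\zeta_k(G)=1$ forces $\zeta(G)=1$. Your overall plan for the classical fact --- restrict the robber to a shortest cycle $C$ of length $\ell\le 5$, note that a girth cycle is isometric, and keep two indistinguishable candidates on $C$ forever --- is the standard argument and is sound.

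The soft spot is the maintenance step. You justify the invariant by claiming ``the pair shifts by at most one position on $C$,'' but the tied pair produced by your parity argument is a pair somewhere on the whole cycle, and nothing guarantees it lies within one step of the previous pair; you then explicitly defer the case analysis rather than carry it out. The gap closes, and more easily than your sketch suggests, if you run the pigeonhole on the \emph{reachable} set rather than on all of $C$. Let $R$ be the candidate set after a probe, with $|R\cap V(C)|\ge 2$. Since $\ell\le 5$, any two distinct vertices of $C$ are within distance $2$ on $C$, so after the robber's move $N[R]\cap V(C)$ contains at least $\min(\ell,4)$ vertices of $C$. By isometry, the distances from any probe vertex $u$ to $V(C)$ take at most $\lfloor\ell/2\rfloor+1\le 3$ distinct values, and $\lfloor\ell/2\rfloor+1<\min(\ell,4)$ for every $\ell\in\{3,4,5\}$. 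Hence some distance class meets $N[R]\cap V(C)$ in at least two vertices, the robber reports that class, and the invariant $|R\cap V(C)|\ge 2$ persists; one cop never wins. With that replacement your proof is complete, and no parity argument or tracking of ``consecutive pairs'' is needed.
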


The non-trivial grid graphs all contain a cycle of size 4, and so $\zeta_k(G_{n, n})>1$ unless $n=1$. We have the following corollary. 

\begin{corollary}
If $\zeta_k(G_{n, n})=1$, then $n=1$.
\end{corollary}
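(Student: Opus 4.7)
The plan is to apply the preceding lemma of \cite{car} via its contrapositive: if $G_{n,n}$ has girth less than $6$, then $\zeta_k(G_{n,n}) \neq 1$. So the task reduces to verifying that $G_{n,n}$ contains a short cycle whenever $n \geq 2$.

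First, I would observe that for any $n \geq 2$, the four vertices $(1,1)$, $(1,2)$, $(2,2)$, $(2,1)$ all lie in $V(G_{n,n})$, and consecutive pairs in this listing differ by exactly one in a single coordinate. Hence they induce a $4$-cycle in $G_{n,n}$, which shows that the girth of $G_{n,n}$ is at most $4$.

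Since $4 < 6$, the previous lemma implies $\zeta_k(G_{n,n}) > 1$ whenever $n \geq 2$. Equivalently, $\zeta_k(G_{n,n}) = 1$ forces $n = 1$, which is what we needed. There is no real obstacle here; the only thing to be slightly careful about is the base case $n = 1$, where $G_{1,1}$ is a single reflexive vertex and so the statement of the corollary is vacuously consistent with the conclusion.
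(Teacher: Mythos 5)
Your argument is exactly the paper's: it invokes the preceding lemma (that $\zeta_k(G)=1$ forces girth at least $6$) and notes that $G_{n,n}$ contains a $4$-cycle whenever $n\geq 2$, with your explicit listing of the vertices $(1,1),(1,2),(2,2),(2,1)$ being a minor elaboration of the paper's one-line observation. The proof is correct and takes the same route.
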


For the case where $k=0$, $\zeta_0(G_{n, n}) = \prox_0(G_{n, n})$ by definition of both games.   
\begin{lemma} \label{lm:zeta_prox_k0}
    For $n \geq 2$, 
    \[
    \zeta_0(G_{n, n}) = \prox_0(G_{n, n}) = n+1
    \]
\end{lemma}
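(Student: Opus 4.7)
The plan has two parts. First, I would establish the equality $\zeta_0(G) = \prox_0(G)$ for \emph{every} graph $G$, reducing the lemma to evaluating $\prox_0(G_{n,n})$. Second, I would read off the exact value from Theorem~\ref{thm:main_grids_prox} by specializing to $k=0$.

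For the first part, observe that when $k=0$, a cop's probe returns $d_i = 0$ precisely when the cop occupies the robber's vertex, and $d_i = \ast$ otherwise. A response of $0$ already pinpoints the robber's location, while $\ast$ conveys no positional data at all. Consequently, the cops can uniquely identify the robber's vertex if and only if some cop eventually lands on it, which is exactly the winning condition of the proximity game. Thus $\zeta_0(G) = \prox_0(G)$ for any graph $G$, and in particular for $G_{n,n}$.

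For the second part, I would apply Theorem~\ref{thm:main_grids_prox} with $k=0$. The modulus $2k^2 + 2k + 1$ equals $1$, so the unique decomposition $n = (\alpha - 1)\cdot 1 + \beta$ with $\beta \in [0, 2k^2 + 2k] = \{0\}$ forces $\beta = 0$ and $\alpha = n+1$. Since $\beta = 0$ lies in the ``exact'' range $[0, 2k^2] = \{0\}$, the theorem gives $\prox_0(G_{n,n}) = \alpha = n+1$. Combined with the first paragraph, this yields $\zeta_0(G_{n,n}) = \prox_0(G_{n,n}) = n+1$, as required.

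There is no real obstacle, but I would briefly verify that the two ingredients of Theorem~\ref{thm:main_grids_prox} remain valid at $k=0$, since $k=0$ is an edge case. For the lower bound, $\Delta_0(G) = 1$ by definition and $\Phi_V(G_{n,n}) = n$ (cited from the proof of Theorem~\ref{thm:main_grids_prox}), so Theorem~\ref{thm:isoperimetric} gives $\prox_0(G_{n,n}) > n$. For the upper bound, Theorem~\ref{thm:prox_k_grids_upper} yields $\prox_0(G_{n,n}) \leq \lceil (n+1)/1 \rceil = n+1$; the tile machinery simply collapses to the trivial tiling of the grid by singleton vertices, so the argument goes through verbatim. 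Hence the lemma is genuinely a direct corollary of the preceding theorems.
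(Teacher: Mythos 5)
Your second step (reading off $\prox_0(G_{n,n})=n+1$ from Theorem~\ref{thm:main_grids_prox} at $k=0$, with $2k^2+2k+1=1$ and singleton tiles) is a legitimate alternative to what the paper does: the paper instead writes out an explicit self-contained $0$-proximity strategy using staircase sets $A_{i,j}$ and probe sets $B_{i,j}$ of size $n+1$, which is really the singleton-tile strategy in disguise. (One small wrinkle if you insist on importing the machinery verbatim: the anchor tile $T_0=T_{0,0}\cap V(G_{\infty,n})$ is empty when $k=0$ because $(0,0)\notin S'$, so the labelling must be re-anchored; this is presumably why the paper gives the direct argument.) The genuine problem is in your first step. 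The claim that $\zeta_0(G)=\prox_0(G)$ for \emph{every} graph, justified by ``the cops can uniquely identify the robber's vertex if and only if some cop eventually lands on it,'' is false in the ``only if'' direction: the Localization game is won as soon as the distance vector admits a \emph{unique candidate}, which can happen by elimination with every probe returning $\ast$. On $P_2$, a single cop probing one endpoint wins the $0$-visibility Localization game immediately (the probe returns $0$ or $\ast$, and either answer leaves one candidate), so $\zeta_0(P_2)=1$, while an omniscient robber can always occupy whichever vertex a lone cop will not probe, so $\prox_0(P_2)=2$. Only $\zeta_0(G)\le\prox_0(G)$ holds in general.

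Consequently your argument establishes $\zeta_0(G_{n,n})\le\prox_0(G_{n,n})=n+1$ but leaves the lower bound $\zeta_0(G_{n,n})\ge n+1$ unproved, since Theorem~\ref{thm:main_grids_prox} only bounds $\prox_0$ from below and Theorem~\ref{thm:prox_bound} is stated for $k\ge 1$. The repair is to note that the isoperimetric argument in the proof of Theorem~\ref{thm:isoperimetric} applies to the candidate set of the Localization game as well: with at most $n$ cops, each round removes at most $n$ candidates before the set expands to its closed neighborhood, so the candidate set never drops below a volume $m^\ast$ at which $\Phi_V(G_{n,n},\cdot)$ attains its maximum $n$; choosing such an $m^\ast\ge 2$ (which exists for $n\ge 2$) shows the cops can never isolate a unique candidate, whence $\zeta_0(G_{n,n})\ge n+1$. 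The paper is itself terse here, asserting $\zeta_0=\prox_0$ ``by definition,'' but since you supplied an explicit justification, it needs to be the correct one.
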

\begin{proof}
Let $A_{i,j} = \{(x,y) : (x \geq i \text{ and } y \geq j) \text{ or } y \geq j+1 \},$ and let $B_{i,j}= \{(x,j) : x<i\} \cup \{(x,j-1) : x \geq i-1\}$. Note that if the robber was on $A_{i,j}$ and then moved, the robber is now on $A_{i,j-1}$. Further, note that $A_{i,j}\setminus B_{i+1,j+1}=A_{i+1,j+1},$ and that $|B_{i,j}| = n+1$.

The robber starts in $A_{1,1}$. If, at some point, the robber is on $A_{i,j}$ after its move with $i \leq n$, then the cop will probe $B_{i+1,j+1}$, meaning the robber is on $A_{i+1,j+1}$. The robber moves to a vertex in $A_{i+1,j}$.
If at some point $i \geq n+1$, then we note that $A_{n+1,j}=A_{1, j+1}$, and so we may say that the robber is on $A_{1, j+1}$, and continue the process. Over each round, the robber territory strictly decreases, so the process will terminate with the robber territory being empty after at most $n^2$ rounds. 
\end{proof}

The following lemma considers the case $k=1.$

\begin{lemma} \label{lm:grids_zeta_and_prox_k1}
Suppose $k=1$. If $\prox_1(G_{n, n}) \geq 3$, then $\zeta_1(G_{n, n})=\prox_1(G_{n, n})$.
\end{lemma}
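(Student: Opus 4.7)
The lower bound $\zeta_1(G_{n,n}) \geq \prox_1(G_{n,n})$ is immediate from Theorem~\ref{thm:prox_bound}, so the task is to prove $\zeta_1(G_{n,n}) \leq \prox_1(G_{n,n})$. Set $m = \prox_1(G_{n,n}) \geq 3$. The plan is a two-phase strategy: first run a winning proximity strategy until the robber is detected for the first time, then use the slack of at least two additional cops to pin down the exact location.

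In the detection phase, the cops follow the winning $1$-proximity strategy from Theorem~\ref{thm:prox_k_grids_upper} until some cop at a vertex $v$ returns a probe value $d \in \{0,1\}$. If $d = 0$ the robber is captured; otherwise the candidate set is contained in $N(v)$, which has at most four vertices, and after the robber's subsequent move it lies in the closed ball of radius two about $v$, which has at most thirteen vertices.

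In the localization phase, the $m \geq 3$ cops triangulate each round: one cop probes $v$ (or a nearby anchor), and the remaining cops probe strategically chosen vertices near $v$ so that the resulting tuple of $1$-visibility distances in $\{0,1,\ast\}^?$, combined with the constraint that the previous candidate set is small, either uniquely identifies the robber or certifies that the robber has fled to a specific thin annulus of vertices at distance exactly three from $v$. In the escape case the cops restart a localized version of the proximity strategy on that annulus using the same $m$ cops; since they now know the robber's coarse position, a potential argument on the size of the remaining candidate region shows that the procedure terminates with capture in finitely many rounds.

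The main obstacle is the triangulation step: one must exhibit probe positions that actually distinguish every vertex of the radius-two ball about $v$ against an optimally moving robber, and handle the boundary configurations where $v$ lies near a side or corner of $G_{n,n}$ so that natural probe targets fall outside the grid. The assumption $m \geq 3$ is essential here, since with only two cops one cannot simultaneously anchor the detected region and separate among its nearest neighbors. A secondary issue is proving termination of the iterative finishing procedure, which can be handled by a potential function tracking the diameter of the remaining candidate region and using the fact that each escape forces the robber into a strictly smaller controllable subset of $G_{n,n}$.
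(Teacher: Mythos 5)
Your skeleton matches the paper's: the lower bound is immediate from the definition of the proximity game, and the upper bound is proved by running a winning $1$-proximity strategy until some cop probes a distance of $1$, then finishing with three cops. However, the entire mathematical content of this lemma lives in the finishing phase, and your proposal explicitly defers it (``one must exhibit probe positions that actually distinguish every vertex of the radius-two ball about $v$\dots''). The paper does exhibit them: after the robber moves, the three cops probe $(x-1,y)$, $(x+1,y)$, $(x,y+1)$, and a short case analysis shows the robber is either caught or confined to one of three specific two-vertex sets; each such set is then handled by a further explicit round of probes. Without those concrete probe positions and the verification that every vertex of the ball is distinguished (including the boundary and corner configurations you correctly flag as delicate), there is no proof here, only a plan.

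A second, more substantive problem is that your proposed termination mechanism would not work as stated. After the triangulation round the surviving candidate set does not shrink --- the paper shows the robber persists on a \emph{pair} of vertices round after round, so a potential function tracking the size or diameter of the candidate region makes no progress. The actual argument is that each iteration pushes that pair a fixed direction (down, or down-and-left/right) toward the border of $G_{n,n}$, and termination comes from the finiteness of the grid: once the pair reaches the border (with a special probe pattern for the corner case $(n,n-1),(n-1,n)$), the robber is captured. Your alternative of ``restarting a localized proximity strategy on a thin annulus at distance exactly three from $v$'' does not correspond to how escape actually occurs under $1$-visibility and would reintroduce the detection problem you had already solved. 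To repair the proposal you would need to replace the potential-function idea with a monotone displacement argument toward the boundary, and supply the explicit probe sets for each surviving configuration.
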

\begin{proof}
We already have that $\zeta_1(G_{n, n}) \geq \prox_1(G_{n, n})$, by the definition of the $k$-proximity game. 
We play the $1$-visibility Localization game with $\prox_1(G_{n, n}) \geq 3$ cops and show that the robber can be captured. We begin by playing the cop strategy from the $1$-proximity game until some cop probes a distance of $1$; say the cop on vertex $(x,y)$. This is always possible since we are playing with at least $\prox_1(G_{n, n})$ cops. 
    
The robber moves. From now on, we play with only three cops. The cops probe $(x-1,y)$, $(x+1,y)$, and $(x,y+1)$. The robber is caught unless it is on:
    \begin{enumerate}
    \item $(x,y-1)$ or $(x,y-2)$, which are the vertices that  have distance at least $2$ to all cops; 
        \item $(x-2,y)$ or $(x-1,y-1)$, which are the vertices that have distance $1$ to the cop on $(x-1,y)$ and distance at least $2$ to the others; or 
        \item $(x+2,y)$ or $(x+1,y-1)$, which are the vertices that have distance $1$ to the cop on $(x+1,y)$ and distance at least $2$ to the others). 
    \end{enumerate}
In the first case, the cops play on $(x-1,y-2)$, $(x+1,y-2)$, and $(x,y-1)$, and the robber is caught.
        
In the second case (which is equivalent to the third), the robber moves, and then the cops probe $(x-2,y)$, $(x,y)$, and $(x-2,y-2)$. The robber is caught unless it is on one of the two vertices $(x-3,y)$ or $(x-2,y+1)$. The robber has been pushed down and right. 
Repeating this argument with the two original diagonal vertices switched will push the robber down and left. 
Thus, by repeating this procedure, the cops can push the robber down until at least one of the two vertices that the robber may be on is along the border of the grid. The robber will be captured using the same technique unless the two vertices the robber may be on are $(n,n-1),(n-1,n)$, or an equivalent pair of vertices in a different corner of the grid. The cops play $(n-2,n), (n,n-2), (n,n)$, and the robber is captured. 
\end{proof}

The next lemma considers when $k\ge 2.$

\begin{lemma} \label{lm:grids_zeta_and_prox}
Suppose $k\geq 2$. If $\prox_k(G_{n,n})\geq 2$, then $\zeta_k(G_{n, n})=\prox_k(G_{n, n})$.
\end{lemma}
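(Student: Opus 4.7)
The plan is to follow the two-phase template of Lemma~\ref{lm:grids_zeta_and_prox_k1}. The lower bound $\zeta_k(G_{n,n})\ge \prox_k(G_{n,n})$ is Theorem~\ref{thm:prox_bound}, so the content is to exhibit a winning $k$-visibility localization strategy for $\prox_k(G_{n,n})\ge 2$ cops. In Phase~1, the cops run the $k$-proximity strategy guaranteed by (the proof of) Theorem~\ref{thm:main_grids_prox} until some cop $C_1$, at vertex $v=(x_0,y_0)$, first probes a finite distance $d\le k$ to the robber. If $d=0$ the robber is caught; otherwise $1\le d\le k$ and the candidate set lies on the $L^1$-sphere of radius $d$ around $v$, a set of at most $4d$ vertices. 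After the robber's move, candidates sit on the thickened sphere of radii $d-1,d,d+1$.

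For Phase~2, I would use only two cops $C_1,C_2$ (any $\prox_k(G_{n,n})-2\ge 0$ remaining cops are idle). The key advantage over the $k=1$ case is that each probe now returns an integer in $\{0,1,\dots,k\}$ instead of a single bit, so two probes per round carry enough information for full disambiguation. On the next round $C_1$ reprobes $v$, pinning the exact radius $d_1$, and $C_2$ probes an adjacent vertex $v'$ with $d(v,v')=1$. Since $k\ge 2$, the triangle inequality gives $d(v',w)\le d_1+1\le k+1$, and the sign of $d_1-d_2\in\{-1,+1\}$ cuts the sphere of radius $d_1$ around $v$ in half along the axis determined by $v'-v$. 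One further round with $C_2$ probing a vertex $v''$ displaced from $v$ in a direction perpendicular to $v'-v$ intersects two $L^1$-spheres in at most two points, and a final round resolves any remaining $2$-vertex ambiguity by probing a distinguishing vertex.

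If the robber evades capture in any single round, the argument pushes the at-most-two-vertex candidate set one step in a fixed direction; iterating forces the candidates toward a side or corner of $G_{n,n}$, where the geometry of the truncated $L^1$-spheres eliminates one of the two remaining candidates, and capture occurs as in the endgame of Lemma~\ref{lm:grids_zeta_and_prox_k1}. The main obstacle is the boundary case $d=k$, in which the robber can move to distance $k+1$ from $v$ in a single step and make $C_1$'s next probe return $\ast$. This is handled by choosing the direction of $v'$ on the basis of the initial sighting: the candidates at the moment of sighting form a known subset of the radius-$d$ sphere, so $v'$ can be placed on the side containing the bulk of those candidates, guaranteeing that at least one of $C_1,C_2$ retains a finite reading. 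The remainder of the argument is a routine but somewhat involved case analysis, parallel to (but shorter than) the one in Lemma~\ref{lm:grids_zeta_and_prox_k1}, with the simplification that every informative probe now yields $\Theta(\log k)$ bits rather than one.
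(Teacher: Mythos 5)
Your overall architecture (Phase 1: run the $k$-proximity strategy until a first finite reading; Phase 2: localize with two cops and push a small candidate set to the boundary) matches the paper's, and the lower bound via Theorem~\ref{thm:prox_bound} is fine. But two key claims in your Phase 2 fail. The central one: probes at $v$, $v+e_1$, $v+e_2$ do \emph{not} cut the candidates down to two points. Knowing $d(v,w)=d_1$ together with the signs of $d_1-d(v',w)$ and $d_1-d(v'',w)$ only identifies the quadrant containing $w$; the surviving candidate set is an entire diagonal edge of the $L^1$-sphere, up to $d_1+1\le k+1$ vertices, and all three of your probe points are equidistant from every vertex of that edge, so further probes from them give nothing. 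Relatedly, ``intersects two $L^1$-spheres in at most two points'' is false: the spheres of radii $r$ and $r-1$ about $(0,0)$ and $(1,0)$ share $r$ points of the line $x+y=r$. Resolving a diagonal strip is where the real work lies, and it is delicate because the strip can be longer than $k$, so no single probe even sees all of it; the paper handles it by probing an endpoint $(x',y')$ and the vertex $(x'+\lfloor k/2\rfloor,y'+\lfloor k/2\rfloor)$ partway along the strip and reading off the parity of the returned distance. Your proposal has no analogue of this step, and it is not ``routine.''

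Second, your fix for the $d=k$ escape does not work. At the moment of first sighting the candidate set is the full sphere of radius $d$ about $v$ intersected with the grid, which is symmetric and has no ``side containing the bulk of the candidates''; a robber diametrically opposite your chosen $v'$ moves to distance $k+1$ from $v$ and $k+2$ from $v'$, so both probes return $\ast$ and you learn essentially nothing. The paper avoids this by probing the straddling pair $(x-1,y)$ and $(x+1,y)$ rather than $v$ itself: every vertex within distance $k+1$ of $(x,y)$ whose first coordinate differs from $x$ is within distance $k$ of one of those two probes, so even a double-$\ast$ reading certifies that the robber's first coordinate equals $x$, and the trichotomy (less than, equal to, greater than $x$) is always obtained. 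You would need to rebuild Phase 2 around probe placements with this covering property rather than around re-probing the sighting vertex.
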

\begin{proof}
We note that a single localization cop cannot capture a robber on an induced cycle in a graph, so it must be that $\zeta_k(G_{n, n}) \geq 2$. 
 After some time playing with $\prox_k(G_{n, n})$ cops in the Localization game, some cop will probe a finite distance to the robber. We will start our argument from this point, using only two cops on each round to capture the robber.  
    
Suppose a cop on vertex $(x,y)$ probed a finite distance, $k$, to the robber. The robber moves and is now at a distance at most $k+1$ from $(x,y)$. Note that the vertices of a fixed distance from some vertex can be decomposed into four diagonal strips. During the second cop's move, the cop places a cop on each of the vertices $(x-1,y)$ and $(x+1,y)$. By observing which of the two cops is closer to the robber, the cops can deduce if the robber's first component is less than $x$, more than $x$, or equal to $x$. 

Suppose the first case occurs (which is identical to the second case up to symmetry), meaning the robber is on a vertex in just two diagonal strips of vertices. The two cops then probe $(x-1,y-1),(x-1,y+1)$. The cops know that either that the robber is on the pair of vertices $(x,y),(x+1,y)$, which occurs if the robber was distance more than $k$ to both cops, or the robber is found to be on some single strip of points $\{(x'+i,y'+i): 1 \leq i \leq k'\}$, where $k'<k$ (or a diagonal strip of points $(x'+i,y'-i)$, but the argument is equivalent up to symmetry). 

The first subcase that the robber is on a pair of vertices $(x,y),(x+1,y)$ can be done for $y \notin \{1,n\}$ by playing a cop on $(x-1,y-1)$ and $(x-1,y+1)$. 
The robber is captured unless $k=2$ and the robber moves to a vertex in $(x+1,y),(x+2,y)$. The cops repeat inductively until the robber is against the border of the grid, at which point it is captured. In the case that $y=n$, the cops play $(x-1,n-1),(x-1,n)$, and the robber is caught immediately, and the case $y=1$ is done similarly. 
    
For the second subcase, we may assume that the robber is on the single strip of points $D=\{(x'+i,y'+i) : 1 \leq i \leq k'\}$ after the cops' second move, where $k'<k$. 
The robber moves. The cops then probe $(x',y')$ and $(x'+\lfloor k/2\rfloor,y'+\lfloor k/2\rfloor)$. 
If the first cop on $(x',y')$ probes an even distance $d$ of at most $k$ to the robber, then the robber must be on the unique vertex of $D$ of this distance to $(x',y')$, namely $(x'+d/2, y'+d/2)$. 
If the first cop probes an odd distance of at most $k$ to the robber, then the robber must be on one of the two vertices that neighbor $D$ of this distance to $(x',y')$, namely $(x'+(d+1)/2, y'+(d-1)/2)$ and $(x'+(d-1)/2, y'+(d+1)/2)$. 
If the first cop probe is $\ast$, then the robber can be assumed to have its first component more than $x'+\lfloor k/2\rfloor$ and its second component more than $y'+\lfloor k/2\rfloor$. A similar argument for the second cop shows that the robber is either captured or found to be on a pair of diagonal vertices. 
    
We can, therefore assume that the robber must be on one of two vertices $(x'',y''),(x''+1,y''+1)$, or the flip of this. 
The two cops play $(x''+1,y''-1),(x''+2,y''+1)$, and the robber will be caught unless it is on one of the two vertices $(x'',y''+1),(x''+1,y''+2)$. 
The cop repeats this strategy, making the robber move progressively upwards until the robber hits the top border of the grid and is caught.  
This completes the second subcase, and thus, the first case is complete (and by symmetry, the second case as well). 

In the remaining third case after the cops' first move, where the robber has its first component equal to $x$, the worst-case scenario for the cops is that the cops only know the robber is on one of the vertices $(x,y+k),(x,y+k+1),(x,y-k),(x,y-k-1)$. All other subcases will hold by identical logic, so we focus on this subcase only. 

The cops play on $(x,y+k-1)$ or $(x,y-k+1)$, and the robber is known to be on a set of (at most) three vertices within a common neighborhood, say (up to symmetry) $(x-1,y-k)$, $(x+1,y-k)$, or $(x,y-k-1)$. 
After the robber moves, the cops play on $(x-1,y-k)$, $(x,y-k-1)$. 
There were 11 vertices the robber could have been on: $\{(x-1,y-k+1),(x-2,y-k)\}$ have distance 1 from the first cop and distance 3 from the second cop, $\{(x,y-k),(x-1,y-k-1)\}$ have distance 1 from both cops, $\{(x+1,y-k-1),(x,y-k-2)\}$ have distance 3 from the first cop and distance 1 from the second cop, $\{(x+1,y-k+1),(x+2,y-k)\}$ have distance 3 from both cops, $(x+1,y-k)$ has distance 2 to both cops, and the other two vertices contain cops. Hence, if the robber is not caught, the robber is on two diagonal vertices, and we have already shown that the cops' have a strategy to capture the robber after such a case occurs during the second subcase of the first case. \end{proof}

Theorem~\ref{thm:main_grids_results} then follows for $k \geq 1$ by taking the values of $\prox_k(G_{n,n})$ from  Theorem~\ref{thm:main_grids_prox}, and applying either Lemma~\ref{lm:grids_zeta_and_prox_k1} or Lemma~\ref{lm:grids_zeta_and_prox} to obtain a result for $\zeta_k(G_{n,n})$. 
Lemma~\ref{lm:zeta_prox_k0} shows that 
Theorem~\ref{thm:main_grids_results} holds for $k=0$. 
    
\section{Conclusion and future directions}

We introduced the $k$-visibility localization number and proved various results, such as bounds and exact values on trees and grids. For Cartesian grids, we would like to determine which of the two values of $\zeta_k$ provided in Theorem~\ref{thm:main_grids_results} holds. Studying $\zeta_k$ in other grids, such as strong, hexagonal, triangular, or higher dimensional Cartesian grids, would also be interesting. 

While we showed that any tree may be subdivided to give $\zeta_k=1,$ this remains open for general graphs. Further, we do not know how to characterize the $k$-visibility localization number for trees. A recent paper \cite{lf} considered the localization numbers of locally finite graphs, which are countable graphs with each degree finite. An interesting direction would be considering the $\zeta_k$ numbers of locally finite trees and graphs.

\section{Acknowledgments}
The authors were supported by NSERC.

\end{document}